\newtheorem{theorem}{Theorem}[section]
\newtheorem{lemma}{Lemma}[section]
\newtheorem{proposition}{Proposition}[section]
\newtheorem{corollary}{Corollary}[section]
\newtheorem{definition}{Definition}[section]
\theoremstyle{remark}
\newtheorem{remark}{Remark}[section]
\begin{document}

\title[Multivariable Bessel polynomials]{Multivariable Bessel polynomials related to the hyperbolic Sutherland model with external Morse potential}

\author{Martin Halln\"as} \address{SISSA, Via Beirut 2-4, 34014 Trieste TS, Italy}
\email{hallnas@sissa.it}

\date{\today}

\begin{abstract}
A multivariable generalisation of the Bessel polynomials is introduced and studied. In particular, we deduce their series expansion in Jack polynomials, a limit transition from multivariable Jacobi polynomials, a sequence of algebraically independent eigenoperators, Pieri type recurrence relations, and certain orthogonality properties. We also show that these multivariable Bessel polynomials provide a (finite) set of eigenfunctions of the hyperbolic Sutherland model with external Morse potential.
\end{abstract}

\maketitle

\section{Introduction}
The Bessel polynomials first appeared in a paper by Bochner \cite{Bochner} as the polynomial solutions of the differential equation
\begin{equation*}
	x^2\frac{d^2y}{dx^2} + (ax + b)\frac{dy}{dx} = n(n + a - 1)y,
\end{equation*}
where $a$ and $b$ are (real) parameters, and $n$ a non-negative integer. The first systematic and detailed study of their properties was later undertaken by Krall and Frink \cite{KF49}. For further details and references see e.g.\ the book by Grosswald \cite{Gross}. In the present paper we introduce and study a multivariable generalisation of the Bessel polynomials as eigenfunctions of the partial differential operator
\begin{equation}
\label{BesselEigenOp}
	D^B = \sum_{i=1}^n x_i^2\frac{\partial^2}{\partial x_i^2} + \sum_{i=1}^n(ax_i + b)\frac{\partial}{\partial x_i} + 2\kappa\sum_{i\neq j}\frac{x_i^2}{x_i-x_j}\frac{\partial}{\partial x_i}
\end{equation}
with $\kappa$ a (real) parameter. In particular, we obtain their series expansion in Jack polynomials, a limit transition from Jacobi polynomials associated with the root system $BC_n$, $n$ algebraically independent eigenoperators, Pieri type recurrence relations, and, as further discussed below, certain orthogonality properties. We note that by setting $n = 1$ in \eqref{BesselEigenOp} we indeed recover the eigenoperator of the one-variable Bessel polynomials. As in the one-variable case, the dependence on the parameter $b$ can be removed by substituting $(bx_1,\ldots,bx_n)$ for $(x_1,\ldots,x_n)$. Following the standard convention we will accordingly set $b = 2$.

The differential operator \eqref{BesselEigenOp} is closely related to the Schr\"odinger operator of the so-called hyperbolic Sutherland model with external Morse potential. The results we obtain on the multivariable Bessel polynomials are thus relevant also for this quantum many-body model. In order to make this relation precise we let
\begin{equation*}
	\Psi_0(x_1,\ldots,x_n) = \prod_{i=1}^n x_i^{(a-2)/2}e^{-\frac{1}{x_i}}\prod_{i<j}(x_i - x_j)^\kappa.
\end{equation*}
As a direct computation shows, conjugation of $D^B$ by this function $\Psi_0$, and a change of coordinates to $(z_1,\ldots,z_n) = (e^{x_1},\ldots,e^{x_n})$, yields the Schr\"odinger type operator
\begin{equation}\label{SchrodOp}
\begin{split}
	H &:= -\Psi_0(D^B - E_0)\Psi_0^{-1}\\ &= -\sum_{i=1}^n\frac{\partial^2}{\partial z_i^2} + \sum_{i=1}^n\left(e^{-2z_i} - (2 - a)e^{-z_i}\right)\\ &\quad + 2\kappa(\kappa - 1)\sum_{i<j}\frac{1}{4\sinh^2\frac{1}{2}(z_i-z_j)}
\end{split}
\end{equation}
with
\begin{equation*}
	E_0 = -\frac{\kappa^2}{3}n(n^2 - 1) - \frac{\kappa(a-(1+\kappa))}{2}n(n - 1) - \frac{(a-1)^2}{4}n;
\end{equation*}
see e.g.~Halln\"as and Langman \cite{HL07} for a proof of this fact. The Schr\"odinger type operator \eqref{SchrodOp} was studied by Inozemtsev and Meshcheryakov \cite{IM86}. They determined the discrete part of its spectrum, and showed that the corresponding eigenfunctions are given by symmetric polynomials. We also mention that the (hyperbolic) Sutherland model, obtained from \eqref{SchrodOp} by eliminating the external Morse potential, was introduced and studied by Sutherland \cite{Sut71}.

It is clear from \eqref{SchrodOp} that any eigenfunction of $D^B$ yields an eigenfunction of $H$ upon multiplication by the function $\Psi_0$. The physically relevant eigenfunctions are those contained in the Hilbert space $L^2(\mathbb{R}^n,dz_1,\ldots dz_n)$. From this point of view the eigenfunctions of $D^B$ contained in $L^2(\mathbb{R}^n_+,|\Psi_0|^2(x)dx_1,\ldots dx_n)$ are of particular interest. In Section 7 we will show that all multivariable Bessel polynomials up to a given degree are contained in this latter Hilbert space if and only if the parameters $a$ and $\kappa$ satisfy a simple inequality. We then prove that these multivariable Bessel polynomials form an orthogonal system with respect to the Hilbert space inner product. In addition, we deduce an explicit expression for the corresponding squared norms. In effect, this provides a (finite) set of normalised eigenfunctions of $H$, representing bound states in the corresponding quantum many-body system.

Multivariable generalisations of the classical orthogonal (Hermite, Laguerre and Jacobi) polynomials, of the same type as the multivariable Bessel polynomials considered here, have been extensively studied in the literature. We mention, in particular, the closely related work of Lassalle \cite{Las91a,Las91b,Las91c} and Macdonald \cite{Mac}, as well as Baker and Forrester \cite{BF97} and van Diejen \cite{vD97,vD99}. For further references see e.g.~the book by Dunkl and Xu \cite{DX01}. As is well known in the one-variable case, the Bessel polynomials share a number of properties with these classical orthogonal polynomials, e.g., the fact that they solve a second order ordinary differential equation. The present study of multivariable Bessel polynomials thus provide a natural complement to the extensive literature alluded to above. We also mention that multivariable generalisations of the Bessel polynomials, different from the one considered here, have been introduced and studied by Exton \cite{Exton}, as well as by Pathan and Bin-Saad \cite{PatBin}.

We conclude this introduction by a brief overview of the paper. In Section 2 we briefly review some basic facts on Jack- and multivariable Jacobi polynomials, as well as generalised hypergeometric series, which will be used in the paper. In Section 3 we give a precise definition of our multivariable Bessel polynomials and deduce their expansion in Jack polynomials. In Section 4 we establish a limit transition from multivariable Jacobi polynomials associated with the root system $BC_n$. This limit transition is then used in Section 5 to obtain $n$ algebraically independent eigenoperators of these multivariable Bessel polynomials. In Section 6 we continue to apply this limit transition to obtain Pieri type recurrence relations. As already mentioned above, the orthogonality of the multivariable Bessel polynomials is discussed in Section 7. In Section 8 we present some further remarks on the relation between the multivariable Bessel polynomials and the eigenfunctions of the hyperbolic Sutherland model with external Morse potential. In this section we also discuss the problem of constructing a (moment) functional with respect to which all multivariable Bessel polynomials would be orthogonal.

\section{Preliminaries}
In this section we recall some basic facts on Jack polynomials and multivariable Jacobi polynomials associated with the root system $BC_n$, as well as on generalised hypergeometric series, which we will make use of in later parts of the paper.

\subsection{Symmetric functions and Jack polynomials}
We recall that a partition $\lambda = (\lambda_1,\lambda_2,\ldots)$ is a finite sequence of non-negative integers such that
\begin{equation*}
	\lambda_1\geq\lambda_2\geq\cdots.
\end{equation*}
The number of non-zero parts is commonly referred to as the length of $\lambda$, and denoted by $\ell(\lambda)$. For simplicity of exposition we will not distinguish two partitions that differ only by a string of zeros at the end. The weight of a partition $\lambda$ is the sum
\begin{equation*}
	|\lambda| = \lambda_1 + \lambda_2 +\cdots
\end{equation*}
of its parts. The diagram of $\lambda$ is the set of points $(i,j)\in\mathbb{Z}^2$ such that $1\leq j\leq\lambda_i$. By reflection in the main diagonal the conjugate partition $\lambda^\prime$ is obtained. We let $e_i$ be the unit vector in $\mathbb{Z}^n$ defined by $(e_i)_j = \delta_{ij}$, and write
\begin{equation*}
	\lambda^{(i)} = \lambda + e_i,\quad \lambda_{(i)} = \lambda - e_i
\end{equation*}
for $i = 1,\ldots,n$. On the set of partitions the dominance order can be defined by
\begin{equation*}
	\mu\leq\lambda \Leftrightarrow |\mu| = |\lambda|~\text{and}~\mu_1 +\cdots +\mu_i\leq \lambda_1 +\cdots + \lambda_i,\quad \forall i = 1,\ldots,n.
\end{equation*}
We stress that two partitions $\lambda$ and $\mu$ are comparable only if they have the same weight, i.e., if $|\mu| = |\lambda|$. The monomial symmetric polynomials $m_\lambda$ are given by
\begin{equation*}
	m_\lambda(x_1,\ldots,x_n) = \sum_{\alpha\in S_n\lambda} x_1^{\alpha_1}\cdots x_n^{\alpha_n},
\end{equation*}
where $S_n\lambda$ denotes the orbit of $\lambda$ under the natural action of the permutation group $S_n$ of $n$ objects. These monomials span the space
\begin{equation*}
	\Lambda_n\subset\mathbb{C}\lbrack x_1,\ldots,x_n\rbrack
\end{equation*}
of symmetric polynomials in the variables $x = (x_1,\ldots,x_n)$ with complex coefficients. For any non-negative integer $m$ we let
\begin{equation*}
	\Lambda_n^{\leq m} = \bigoplus_{k = 0}^m \Lambda_n^k,
\end{equation*}
where $\Lambda_n^k\subset\Lambda_n$ is the subspace of homogeneous polynomials of degree $k$. For further details on partitions and symmetric functions see e.g.~Chapter I in Macdonald \cite{Mac95}.

In order to proceed we require the differential operators
\begin{equation*}
	E_\ell = \sum_{i=1}^n x_i^\ell \frac{\partial}{\partial x_i}
\end{equation*}
and
\begin{equation*}
	D_k = \sum_{i=1}^n x_i^k \frac{\partial^2}{\partial x_i^2} + 2\kappa\sum_{i\neq j}\frac{x_i^k}{x_i-x_j}\frac{\partial}{\partial x_i}
\end{equation*}
for $\ell = 0,1$ and $k = 1,2$, respectively. We recall that the (monic) Jack polynomials $P_\lambda$ can be defined as the unique symmetric polynomials that satisfy the following two conditions:
\begin{enumerate}
\item $P_\lambda = m_\lambda + \sum_{\mu<\lambda}u_{\lambda\mu}m_\mu$ for some coefficients $u_{\lambda\mu}$,
\item $P_\lambda$ is an eigenfunction of $E_1$ and $D_2$;
\end{enumerate}
see e.g.~Macdonald \cite{Mac95} or Stanley \cite{Sta89}. To any two partitions $\lambda$ and $\mu$ a generalised binomial coefficient $\binom{\lambda}{\mu}$ can be associated via the expansion
\begin{equation*}
	\frac{P_\lambda(x_1+1,\ldots,x_n+1)}{P_\lambda(1^n)} = \sum_{\mu\subseteq\lambda}\binom{\lambda}{\mu}\frac{P_\lambda(x_1,\ldots,x_n)}{P_\lambda(1^n)}.
\end{equation*}
We recall that the fact that the sum extends only over partitions $\mu\subseteq\lambda$ is non-trivial. To  the best of my knowledge, this fact was first established by Kaneko \cite{Kan93}. A short and elegant proof, exploiting a relation with the so-called shifted Jack polynomials, was later given by Okounkov and Olshanski \cite{OO97}.

For future reference we state the known action of the operators $E_\ell$ and $D_k$, as well as the power sum $p_1(x) = x_1 +\cdots + x_n$, on the Jack polynomials:
\begin{subequations}\label{actionFormulae}
\begin{align}
\label{E0Action}
	E_0 \frac{P_\lambda(x)}{P_\lambda(1^n)} &= \sum_i \binom{\lambda}{\lambda_{(i)}}\frac{P_{\lambda_{(i)}}(x)}{P_{\lambda_{(i)}}(1^n)},\\
\label{E1Action}
	E_1 P_\lambda &= |\lambda|P_\lambda,\\
\label{D1Action}
	D_1 \frac{P_\lambda(x)}{P_\lambda(1^n)} &= \sum_i \binom{\lambda}{\lambda_{(i)}}(\lambda_i - 1 + \kappa(n - i))\frac{P_{\lambda_{(i)}}(x)}{P_{\lambda_{(i)}}(1^n)},\\
\label{D2Action}
	D_2 P_\lambda &= d_\lambda P_\lambda,\quad d_\lambda = \sum_i \lambda_i(\lambda_i - 1 + 2\kappa(n-i)),\\
\label{p1Action}
	p_1(x) \frac{P_\lambda(x)}{h_\lambda} &= \sum_i \binom{\lambda^{(i)}}{\lambda}\frac{P_{\lambda^{(i)}}(x)}{h_{\lambda^{(i)}}},
\end{align}
\end{subequations}
where we have made use of the notation
\begin{equation*}
	h_\lambda = \prod_{i=1}^{\ell(\lambda)}\prod_{j=1}^{\lambda_i}(\lambda_i - j + \kappa(\lambda^\prime_j - i) + 1),
\end{equation*}
and where the sums are over all $i = 1,\ldots,n$ such that in \eqref{E0Action} and \eqref{D1Action} $\lambda_{(i)}$ is a partition and in \eqref{p1Action} $\lambda^{(i)}$ is a partition. We remark that \eqref{E1Action} and \eqref{D2Action} are readily inferred from the definition of the Jack polynomials as common eigenfunctions of $E_1$ and $D_2$, and the action of these operators on the monomials $m_\lambda$. Equation \eqref{E0Action} can be deduced from the definition of the binomial coefficients (see e.g.\ Kaneko \cite{Kan93}), and \eqref{D1Action} can then be obtained using the fact that $2D_1 = \lbrack E_0, D_2\rbrack$. Equation \eqref{p1Action}, on the other hand, is more difficult to prove. As far as I know, it was first established by Kaneko \cite{Kan93}; see also Okounkov and Olshanski \cite{OO97}.

\subsection{Generalised hypergeometric series}
Kaneko \cite{Kan93}, Kor\'anyi \cite{Kor91} and Macdonald \cite{Mac} introduced and studied multivariable hypergeometric series associated with Jack polynomials. In order to recall their definition we require the following generalisation of the Pochhammer symbol:
\begin{equation*}
	\lbrack\alpha\rbrack^{(\kappa)}_\lambda = \prod_{i=1}^{\ell(\lambda)}\lbrack\alpha - \kappa(i - 1)\rbrack_{\lambda_i},	
\end{equation*}
where the ordinary Pochhammer symbol $\lbrack\alpha\rbrack_m$ is given by $\lbrack\alpha\rbrack_0 = 1$ and $\lbrack\alpha\rbrack_m = \alpha(\alpha+1)\cdots(\alpha+m-1)$ for $m>0$. The generalised hypergeometric series can now be defined by its series expansion in Jack polynomials:
\begin{equation}
\label{HypergSeries}
	{}_{p}F_q(\alpha_1,\ldots,\alpha_p;\beta_1,\ldots,\beta_q;x;\kappa) = \sum_\lambda \frac{\lbrack\alpha_1\rbrack^{(\kappa)}_\lambda \cdots \lbrack\alpha_p\rbrack^{(\kappa)}_\lambda}{\lbrack\beta_1\rbrack^{(\kappa)}_\lambda \cdots \lbrack\beta_q\rbrack^{(\kappa)}_\lambda}\frac{P_\lambda(x;\kappa)}{h_\lambda(\kappa)}.
\end{equation}
For $p = 2$ and $q=1$ it is known that the resulting generalised hypergeometric series ${}_2F_1(\alpha_1,\alpha_2;\beta_1;x;\kappa)$ satisfies the diferential equation
\begin{equation}
\label{HypergDiffEq}
	(D_1 - D_2)F + (\beta_1 - \kappa(n-1))E_0 F - (\alpha_1 + \alpha_2 + 1 - \kappa(n-1))E_1 F = n\alpha_1\alpha_2 F;
\end{equation}
see e.g.~Kaneko \cite{Kan93} or Yan \cite{Yan92}.

\subsection{Jacobi polynomials associated with the root system $BC_n$}
Jacobi polynomials associated with root systems have been extensively studied in the literature. We mention, in particular, the work of Vretare \cite{Vre84}, the first to introduce such polynomials, and that of Debiard \cite{Deb88}, Heckman and Opdam \cite{HO87}, as well as that of Beerends and Opdam \cite{BO93}. We will make use of the Jacobi polynomials associated with the root systems $BC_n$, henceforth referred to as the $BC_n$ Jacobi polynomials. These polynomials are even and permutation invariant trigonometric polynomials, and can be defined as eigenfunctions of the differential operator
\begin{multline*}
	D^{BC} = \sum_{i=1}^n\frac{\partial^2}{\partial z_i^2} + \sum_{i=1}^n\left(k_1\coth\frac{1}{2}z_i + 2k_2\coth z_i\right)\frac{\partial}{\partial z_i}\\ + k_3\sum_{i<j}\left(\coth\frac{1}{2}(z_i - z_j)\left(\frac{\partial}{\partial z_i} - \frac{\partial}{\partial z_j}\right) + \coth\frac{1}{2}(z_i + z_j)\left(\frac{\partial}{\partial z_i} + \frac{\partial}{\partial z_j}\right)\right)
\end{multline*}
with $k_1,k_2$ and $k_3$ (real) parameters. As discussed by Beerends and Opdam \cite{BO93}, by a change of variables  to $t_i = -\sinh^2 z_i/2$ this operator can be written as
\begin{equation*}
\begin{split}
	D^{BC}  &= \sum_{i=1}^n t_i(t_i - 1)\frac{\partial^2}{\partial t_i^2} - \sum_{i=1}^n\left(k_1+k_2+\frac{1}{2}-(k_1+2k_2+1)t_i\right)\frac{\partial}{\partial t_i}\\ &\quad + 2k_3\sum_{i\neq j}\frac{t_i(t_i - 1)}{t_i-t_j}\frac{\partial}{\partial t_i}.
\end{split}
\end{equation*}
It is thus clear from formulae \eqref{actionFormulae} that
\begin{equation*}
	D^{BC}P_\lambda(t(z)) = e^{BC}_\lambda P_\lambda(t(z)) + \sum_{\mu\subset\lambda}c_{\lambda\mu}P_\mu(t(z))
\end{equation*}
for some coefficients $c_{\lambda\mu}$, and with
\begin{equation*}
	e^{BC}_\lambda = d_\lambda + (k_1 + 2k_2 + 1)|\lambda|.
\end{equation*}
This leads us to the definition of the $BC_n$ Jacobi polynomials $P^{BC}_\lambda$ as the unique polynomials satisfying the following two conditions:
\begin{enumerate}
\item $P^{BC}_\lambda(z) =  (-4)^{|\lambda|}P_\lambda(t(z)) + \sum_{\mu\subset\lambda}u_{\lambda\mu}P_\lambda(t(z))$ for some coefficients $u_{\lambda\mu}$,
\item $P^{BC}_\lambda$ is an eigenfunction of $D^{BC}$.
\end{enumerate}
We remark that it is common to define the $BC_n$ Jacobi polynomials in terms of the monomials $m_\lambda(e^{z_1}+e^{-z_1},\ldots,e^{z_n}+e^{-z_n})$; see e.g. Beerends and Opdam \cite{BO93}. However, using the triangular structure in the expansion of the Jack polynomials in the symmetric monomials $m_\lambda$ it is easy to see that this leads to an equivalent definition. As will become apparent below, for our purposes the present definition is somewhat more convenient. We also mention that the leading coefficient of the $BC_n$ Jacobi polynomials $P^{BC}_\lambda$ has been set to $(-4)^{|\lambda|}$ in order to ensure that they are monic when expanded in the monomials specified above. 

\section{Definition and series expansion in Jack polynomials}
In this section we will first give a precise definition of our multivariable Bessel polynomials, and then proceed to deduce and study their expansion in Jack polynomials. We start by observing that \eqref{actionFormulae} implies that the operator $D^B$ acts on the Jack polynomials as follows:
\begin{equation}
\label{DBAction}
	D^B P_\lambda = e_\lambda P_\lambda + 2\sum_{i=1}^n\frac{P_\lambda(1^n)}{P_{\lambda_{(i)}}(1^n)}\binom{\lambda}{\lambda_{(i)}}P_{\lambda_{(i)}}(x)
\end{equation}
with
\begin{equation}
\label{BesselEigenvalues}
	e_\lambda = d_\lambda + a|\lambda|
\end{equation}
(where we recall that $b = 2$). This implies, in particular, that $D^B$ maps each Jack polynomial $P_\lambda$ to a linear combination of Jack polynomials $P_\mu$ labelled by partitions $\mu\subseteq\lambda$. It follows that $D^B$ have eigenfunctions of the form
\begin{equation*}
	Y_\lambda(x;\kappa) = \sum_{\mu\subseteq\lambda}u_{\lambda\mu}P_\mu(x;\kappa)
\end{equation*}
for some coefficients $u_{\lambda\mu}$. Indeed, using \eqref{DBAction} it is readily verified that this is the case if and only if the coefficients $u_{\lambda\mu}$ satisify the recurrence relation
\begin{equation}
\label{BesselRecursRel}
	(e_\lambda - e_\mu)u_{\lambda\mu} = 2\sum_{i=1}^n\frac{P_{\mu^{(i)}}(1^n)}{P_\mu(1^n)}\binom{\mu^{(i)}}{\mu}u_{\lambda\mu^{(i)}}
\end{equation}
for all partitions $\mu\subset\lambda$. If $e_\mu\neq e_\lambda$ for all such partitions, then the coefficients $u_{\lambda\mu}$ with $\mu\subset\lambda$ are uniquely determined by \eqref{BesselRecursRel} once the value of $u_{\lambda\lambda}$ is fixed. This leads us to our definition of multivariable Bessel polynomials.

\begin{definition}\label{besselDef}
Fix the values of the parameters $a$ and $\kappa$, and let $\lambda = (\lambda_1,\ldots,\lambda_n)$ be a partition such that $e_\mu\neq e_\lambda$ for all partitions $\mu\subset\lambda$. We then define $Y_\lambda$ as the (unique) symmetric polynomial such that
\begin{enumerate}
\item $Y_\lambda = P_\lambda + \sum_{\mu\subset\lambda}u_{\lambda\mu}P_\mu$ for some coefficients $u_{\lambda\mu}$,
\item $Y_\lambda$ is an eigenfunction of $D^B$.
\end{enumerate}
\end{definition}

It is clear from the discussion above that the level of non-degeneracy required of the eigenvalues $e_\lambda$ is essential in order for the multivariable Bessel polynomials $Y_\lambda$ to be well-defined. As we now show, this condition is generically satisfied.

\begin{proposition}\label{nonDegeneracyProp}
Let $\lambda = (\lambda_1,\ldots,\lambda_n)$ be a partition. Then, for generic values of the parameters $a$ and $\kappa$, $e_\mu\neq e_\lambda$ for all partitions $\mu\subset\lambda$. In particular, this is the case if $\kappa\geq 0$ and $a$ satisfies the condition
\begin{equation}\label{aInequality}
	a < -2(|\lambda| + \kappa(n-1)) + 1.
\end{equation}
\end{proposition}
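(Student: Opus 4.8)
The plan is to argue directly from the closed formula for the eigenvalues, without invoking the recurrence \eqref{BesselRecursRel}. Writing out \eqref{BesselEigenvalues} with $d_\lambda$ as in \eqref{D2Action} and pairing the summands of $d_\lambda$ and $d_\mu$ index by index (which is legitimate since both sums run over $i = 1,\ldots,n$ and both $\lambda$ and $\mu$ have at most $n$ parts), one obtains, for any $\mu\subseteq\lambda$,
\begin{equation*}
	e_\lambda - e_\mu = \sum_{i=1}^n(\lambda_i - \mu_i)\bigl(\lambda_i + \mu_i - 1 + 2\kappa(n - i)\bigr) + a\,(|\lambda| - |\mu|).
\end{equation*}
The elementary facts I would then rely on are that when $\mu\subset\lambda$ one has $\lambda_i - \mu_i\geq 0$ for every $i$, that $k := |\lambda| - |\mu| = \sum_i(\lambda_i - \mu_i)$ is consequently a strictly positive integer, and that $\lambda_i + \mu_i\leq 2\lambda_i\leq 2\lambda_1\leq 2|\lambda|$.

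For the genericity assertion, I would note that the displayed expression is an affine function of $(a,\kappa)$ whose coefficient of $a$ equals $k\geq 1$; in particular it is not the zero function, so the locus $\{(a,\kappa):e_\lambda = e_\mu\}$ is a line in the $(a,\kappa)$-plane. Since there are only finitely many partitions $\mu\subset\lambda$, the set of parameter values for which $e_\mu = e_\lambda$ for some such $\mu$ is a finite union of lines, and its (open, dense, full-measure) complement is the desired generic set.

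For the explicit sufficient condition, assume $\kappa\geq 0$ and that $a$ obeys \eqref{aInequality}. Using $\kappa\geq 0$ (so that $2\kappa(n - i)\leq 2\kappa(n - 1)$ for $i\geq 1$) together with $\lambda_i + \mu_i\leq 2|\lambda|$, each factor in the sum is bounded above by $2|\lambda| - 1 + 2\kappa(n - 1)$, whence, multiplying by the nonnegative quantity $\lambda_i-\mu_i$ and summing,
\begin{equation*}
	\sum_{i=1}^n(\lambda_i - \mu_i)\bigl(\lambda_i + \mu_i - 1 + 2\kappa(n - i)\bigr)\leq k\bigl(2|\lambda| - 1 + 2\kappa(n - 1)\bigr).
\end{equation*}
At the same time \eqref{aInequality} gives $a\,k < k\bigl(1 - 2|\lambda| - 2\kappa(n - 1)\bigr)$, since $k\geq 1$. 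Adding these two estimates, the right-hand sides cancel exactly, so $e_\lambda - e_\mu < 0$, and in particular $e_\mu\neq e_\lambda$, as required.

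I do not expect a genuine obstacle here. The one place that needs a little care is that the bound on each factor must be applied only after multiplying by $\lambda_i - \mu_i\geq 0$, and that the constant appearing in \eqref{aInequality} is precisely the negative of the bound $2|\lambda| - 1 + 2\kappa(n - 1)$, so that the two inequalities combine with the correct sign; everything else is routine bookkeeping with the formula for $d_\lambda$.
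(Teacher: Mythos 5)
Your proposal is correct and follows essentially the same route as the paper: both compute $e_\lambda-e_\mu$ explicitly from \eqref{BesselEigenvalues} and \eqref{D2Action} (the paper writes it via $\lambda=\mu+\sum_i n_ie_i$, which is the same identity with $n_i=\lambda_i-\mu_i$), observe that it is a non-vanishing affine function of $(a,\kappa)$ to get genericity, and then check that under $\kappa\geq 0$ and \eqref{aInequality} every term is forced negative so the difference cannot vanish. Your version merely spells out the bookkeeping (the bound $\lambda_i+\mu_i\leq 2|\lambda|$ and multiplying by $\lambda_i-\mu_i\geq 0$ before summing) that the paper leaves as ``clearly non-zero.''
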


\begin{proof}
We fix a partition $\mu\subset\lambda$. Since $e_\lambda - e_\mu$ is a polynomial in $\kappa$ and $a$, and not identically zero, we have that $e_\mu\neq e_\lambda$ for $(\kappa,a)$ in an open dense subset of $\mathbb{R}^2$ (or even $\mathbb{C}^2$). We observe that $\lambda$ is of the form
\begin{equation*}
	\lambda = \mu + \sum_{i=1}^n n_ie_i,\quad n_i\in\mathbb{N}.
\end{equation*}
It follows that
\begin{equation*}
	e_\lambda - e_\mu = \sum_{i=1}^n n_i\big(2(\mu_i + \kappa(n-i)) - 1 + a\big).
\end{equation*}
Under the stated conditions on $\kappa$ and $a$ the right hand side is clearly non-zero.
\end{proof}

Throughout the remainder of the paper we will assume that the parameters $a$ and $\kappa$ are such that the Bessel polynomials in question are well-defined. We mention, already at this point, that in our discussion of orthogonality in Section 7 the relevant parameter values will be precisely those for which $\kappa\geq 0$ and $a$ satisfies the condition \eqref{aInequality}. Proposition \ref{nonDegeneracyProp} will then ensure that the multivariable Bessel polynomials under consideration indeed are well-defined. With that said we proceed to solve the recurrence relation \eqref{BesselRecursRel}, and thereby to obtain an expression for the coefficients in the series expansion of the multivariable Bessel polynomial in Jack polynomials. By iterating \eqref{BesselRecursRel} we find that the coefficients $u_{\lambda\mu}$ are given by
\begin{equation*}
	u_{\lambda\mu} = 2^{|\lambda|-|\mu|}\sum_T \prod_{i=1}^{|\lambda|-|\mu|}\frac{P_{{}^{(i-1)}\lambda}(1^n)}{P_{{}^{(i)}\lambda}(1^n)}\binom{{}^{(i-1)}\lambda}{{}^{(i)}\lambda}\Big/(e_\lambda - e_{{}^{(i)}\lambda}),
\end{equation*}
where the sum is over all chains of partitions
\begin{equation*}
	T: \lambda = {}^{(0)}\lambda\supset {}^{(1)}\lambda\supset\ldots \supset {}^{(r)}\lambda = \mu
\end{equation*}
such that $r = |\lambda| - |\mu|$ and $|{}^{(i-1)}\lambda| - |{}^{(i)}\lambda| = 1$ for all $i = 1,\ldots,r$, i.e., over all standard tableux $T$ of shape $\lambda/\mu$; see e.g.~Section I.1 in Macdonald \cite{Mac95}. Inserting the expression \eqref{BesselEigenvalues} for the eigenvalues of the Bessel polynomials we thus arrive at the following:

\begin{proposition}\label{seriesProp}
The multivariable Bessel polynomials have a series expansion
\begin{equation*}
	Y_\lambda = P_\lambda + \sum_{\mu\subset\lambda} u_{\lambda\mu}P_\mu
\end{equation*}
with coefficients
\begin{equation}\label{coeffs}
	u_{\lambda\mu} = 2^{|\lambda|-|\mu|}\sum_T \prod_{i=1}^{|\lambda|-|\mu|}\frac{P_{{}^{(i-1)}\lambda}(1^n)}{P_{{}^{(i)}\lambda}(1^n)}\binom{{}^{(i-1)}\lambda}{{}^{(i)}\lambda}\Big/\left(ai + d_\lambda - d_{{}^{(i)}\lambda}\right),
\end{equation}
where the latter sum is over all standard tableux $T$ of shape $\lambda/\mu$.
\end{proposition}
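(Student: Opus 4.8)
The plan is to solve the recurrence relation \eqref{BesselRecursRel} in closed form by iteration, which I would make rigorous by downward induction on $r := |\lambda| - |\mu|$, and then to rewrite the eigenvalue differences appearing in the denominators in the form displayed in \eqref{coeffs} by means of \eqref{BesselEigenvalues}. Throughout, the standing non-degeneracy assumption on $\lambda$ from Definition \ref{besselDef} (namely $e_\nu \neq e_\lambda$ for every partition $\nu \subset \lambda$) is what permits the divisions: every partition occurring strictly between $\mu$ and $\lambda$ is a proper subpartition of $\lambda$, so all the relevant eigenvalue differences are nonzero.

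First I would treat the base case $\mu = \lambda$, where $r = 0$: the empty product equals $1$, $2^{0} = 1$, and there is a unique (empty) standard tableau of shape $\lambda/\lambda$, so \eqref{coeffs} reduces to $u_{\lambda\lambda} = 1$, in agreement with condition (1) of Definition \ref{besselDef}. For the inductive step I would fix $\mu \subset \lambda$ with $r \geq 1$ and assume \eqref{coeffs} for all partitions $\nu$ with $\mu \subsetneq \nu \subseteq \lambda$. Since $u_{\lambda\nu}$ vanishes unless $\nu \subseteq \lambda$, the right-hand side of \eqref{BesselRecursRel} is effectively a sum over those $i$ for which $\mu^{(i)}$ is a partition contained in $\lambda$; dividing by $e_\lambda - e_\mu \neq 0$ and inserting the inductive hypothesis for each $u_{\lambda\mu^{(i)}}$ produces a double sum over such $i$ and over standard tableaux of shape $\lambda/\mu^{(i)}$. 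The combinatorial heart of the matter is that a standard tableau $T$ of shape $\lambda/\mu$ determines, through its largest entry, an inner corner cell $\mu^{(i)}/\mu$, and deleting that cell yields a standard tableau of shape $\lambda/\mu^{(i)}$; this gives a bijection from the disjoint union over admissible $i$ of the standard tableaux of shape $\lambda/\mu^{(i)}$ onto the standard tableaux of shape $\lambda/\mu$, under which a chain $\lambda = {}^{(0)}\lambda \supset \cdots \supset {}^{(r-1)}\lambda = \mu^{(i)}$ is extended by the step ${}^{(r)}\lambda = \mu$ (cf.\ Section I.1 of Macdonald \cite{Mac95}). One then checks that the factor $2$ in \eqref{BesselRecursRel} combines with the $2^{r-1}$ from the inductive hypothesis to give $2^{r}$; that the factor $\binom{\mu^{(i)}}{\mu}\, P_{\mu^{(i)}}(1^n)/P_\mu(1^n)$ supplied by \eqref{BesselRecursRel} is exactly the $i = r$ numerator factor in \eqref{coeffs}; and that the overall division by $e_\lambda - e_\mu = e_\lambda - e_{{}^{(r)}\lambda}$ contributes the missing $i = r$ denominator. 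This reproduces \eqref{coeffs} but with $e_\lambda - e_{{}^{(i)}\lambda}$ in place of $ai + d_\lambda - d_{{}^{(i)}\lambda}$.

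It then only remains to identify the denominators. Since ${}^{(i)}\lambda$ has weight $|\lambda| - i$, formula \eqref{BesselEigenvalues} gives $e_\lambda - e_{{}^{(i)}\lambda} = a\,i + d_\lambda - d_{{}^{(i)}\lambda}$, which is exactly the denominator appearing in \eqref{coeffs}; substituting this into the expression obtained above completes the proof.

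The main obstacle is of a bookkeeping rather than a conceptual nature: one must verify that iterating \eqref{BesselRecursRel} yields a sum indexed precisely by standard tableaux of shape $\lambda/\mu$ with the prescribed weight conditions on the chain, and that the data accumulated along each chain — the power of $2$, the telescoping ratio $\prod_i P_{{}^{(i-1)}\lambda}(1^n)/P_{{}^{(i)}\lambda}(1^n)$, the product of generalised binomial coefficients, and the product of eigenvalue differences — assemble exactly into \eqref{coeffs}. Keeping track of the containment constraint $\nu \subseteq \lambda$ is also what ensures the iteration terminates. Once this is established, the passage to \eqref{coeffs} is the single substitution described above.
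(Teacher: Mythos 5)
Your proposal is correct and follows essentially the same route as the paper: the paper likewise obtains \eqref{coeffs} by iterating the recurrence \eqref{BesselRecursRel} to produce a sum over chains $\lambda = {}^{(0)}\lambda\supset\cdots\supset{}^{(r)}\lambda = \mu$ (identified with standard tableaux of shape $\lambda/\mu$) and then substitutes \eqref{BesselEigenvalues} to write $e_\lambda - e_{{}^{(i)}\lambda} = ai + d_\lambda - d_{{}^{(i)}\lambda}$, your induction on $r$ simply making the word ``iterating'' precise. The only nitpick is notational: with the usual filling convention the cell $\mu^{(i)}/\mu$ distinguished by your bijection carries the \emph{smallest} entry of $T$ rather than the largest (equivalently, one may use the reverse labelling of the chain), but this is a labelling convention and does not affect the argument.
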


\begin{remark}
For the multivariable classical orthogonal polynomials this type of series expansions are well known. To the best of my knowledge, such a series expansion was first obtained by Constantine \cite{Con66} for the multivariable Laguerre polynomials corresponding to $\kappa = 1/2$. We recall that for this particular parameter value the Jack polynomials specialise to the so-called Zonal polynomials; see e.g.~Macdonald \cite{Mac95}. James and Constantine \cite{JC74} later obtained the analogous series expansion for the multivariable Jacobi polynomials with $\kappa = 1/2$ or $1$. Series expansions for general values of $\kappa$ can be found in Baker and Forrester \cite{BF97}, Beerends and Opdam \cite{BO93}, Lassalle \cite{Las91a,Las91b,Las91c}, as well as in Macdonald \cite{Mac}.\end{remark}

The generalised binomial coefficients $\binom{\lambda}{\mu}$ are in general rather complicated objects; see e.g.the remark in Section 3 of Okounkov and Olshanski \cite{OO97}. However, the particular binomial coefficients appearing in Proposition \ref{seriesProp} can be written down explicitly using the fact that
\begin{equation}\label{binomExp}
	\binom{\lambda^{(i)}}{\lambda} = (\lambda_i + 1 + \kappa(\ell(\lambda) - i))\prod_{j\neq i}\frac{\lambda_i-\lambda_j+1+\kappa(j-i-1)}{\lambda_i-\lambda_j+1+\kappa(j-i)}.
\end{equation}
This latter formula can be deduced from a comparison of \eqref{p1Action} with the corresponding Pieri formula in Macdonald's book \cite{Mac95} (Equation 6.24 (iv) in Chapter VI); see Lassalle \cite{Las98} for further details. We recall that Stanley \cite{Sta89} (see Theorem 5.4) obtained the specialisation of  the integral form
\begin{equation*}
	J_\lambda = \kappa^{-|\lambda|}h^\lambda P_\lambda,\quad h^{\lambda} = \prod_{i=1}^{\ell(\lambda)}\prod_{j=1}^{\lambda_i}(\lambda_i - j + \kappa(\lambda^\prime_j - i + 1)),
\end{equation*}
of the Jack polynomials at $x = (1^n)$. Combining this specialisation formula with the appropriate limit of Proposition 3 in Lassalle \cite{Las98} (multiply by $1 - t$, set $q=t^\alpha$ and let $t\rightarrow 1$) it is readily verified that
\begin{equation}\label{JackExp}
\begin{split}
	\frac{P_{\lambda^{(i)}}(1^n)}{P_\lambda(1^n)} &= \kappa\frac{\lambda_i+\kappa(n-i+1)}{\lambda_i+\kappa(\ell(\lambda)-i+1)}\prod_{j<i}\frac{\lambda_i-\lambda_j+1+\kappa(j-i)}{\lambda_i-\lambda_j+1+\kappa(j-i-1)}\\ &\quad\times\prod_{j>i}\frac{\lambda_i-\lambda_j+\kappa(j-i+1)}{\lambda_i-\lambda_j+\kappa(j-i)}.
\end{split}
\end{equation}
Using these two formulae we can thus write down an explicit expression for the coefficients $u_{\lambda\mu}$. This enables us to study the dependence of the multivariable Bessel polynomials on the parameters $a$ and $\kappa$. In particular, since the coefficients in the expansion of the integral form
of the Jack polynomials in terms of the monomials $m_\lambda$ are polynomials in $1/\kappa$ (see e.g.~Knop and Sahi \cite{KS97} and references therein), we can prove the following:

\begin{corollary}\label{analyticCorollary}
The multivariable Bessel polynomials are rational functions of $a$ and $\kappa$, and are thus for generic values analytic functions of these parameters. In particular, this is the case if $\kappa\geq 0$ and $a$ satisfies the inequality \eqref{aInequality}.
\end{corollary}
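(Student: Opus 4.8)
The plan is to read rationality straight off the explicit Jack expansion of $Y_\lambda$, and then to locate its poles so as to obtain the region stated in the last sentence. By Proposition~\ref{seriesProp}, $Y_\lambda = P_\lambda + \sum_{\mu\subset\lambda}u_{\lambda\mu}P_\mu$ with $u_{\lambda\mu}$ given by \eqref{coeffs}, so it suffices to check: (a) every coefficient $u_{\lambda\mu}$ is a rational function of $a$ and $\kappa$; and (b) every monic Jack polynomial $P_\mu$, expanded in the monomial basis $\{m_\nu\}$, has coefficients that are rational functions of $\kappa$. Granting (a) and (b), $Y_\lambda$ expanded in $\{m_\nu\}$ has coefficients rational in $a$ and $\kappa$, which is the first assertion; the second then follows immediately, since a rational function of $(a,\kappa)$ is analytic off the zero locus of its denominator, a proper algebraic (hence nowhere dense) subset of $\mathbb{R}^2$. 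For (a) one merely inspects the finite product in \eqref{coeffs}: by \eqref{JackExp} each ratio $P_{{}^{(i-1)}\lambda}(1^n)/P_{{}^{(i)}\lambda}(1^n)$ is rational in $\kappa$; by \eqref{binomExp} each binomial coefficient $\binom{{}^{(i-1)}\lambda}{{}^{(i)}\lambda}$ is rational in $\kappa$; and by \eqref{D2Action} each denominator $ai + d_\lambda - d_{{}^{(i)}\lambda}$ is a polynomial in $a$ and $\kappa$, of degree one in $a$. For (b) I would use the identity $P_\mu = \kappa^{|\mu|}(h^\mu)^{-1}J_\mu$ together with the facts recalled above that $h^\mu$ is a polynomial in $\kappa$ and that the coefficients of $J_\mu$ in $\{m_\nu\}$ are polynomials in $1/\kappa$.

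For the final assertion I would show that no pole of $Y_\lambda$ meets the region $\kappa\geq 0$, $a < -2(|\lambda| + \kappa(n-1)) + 1$. The denominators fall into two classes. The $a$-dependent ones are exactly the factors $ai + d_\lambda - d_{{}^{(i)}\lambda} = e_\lambda - e_{{}^{(i)}\lambda}$ in \eqref{coeffs}; as each ${}^{(i)}\lambda$ is a partition properly contained in $\lambda$, Proposition~\ref{nonDegeneracyProp} shows these do not vanish anywhere in the region. The remaining denominators depend on $\kappa$ alone and arise from \eqref{binomExp}, \eqref{JackExp} and from the factors $(h^\mu)^{-1}$ in (b); for these I would check nonvanishing on $\kappa\geq 0$. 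Each factor of $h^\mu = \prod_{(i,j)\in\mu}(\mu_i - j + \kappa(\mu'_j - i + 1))$ has $\mu_i - j\geq 0$ and $\mu'_j - i + 1\geq 1$, so it vanishes only at $\kappa = 0$; but $P_\mu = m_\mu$ there, so the coefficients of $P_\mu$ are regular at $\kappa = 0$ too, and hence for all $\kappa\geq 0$. For the factors coming from \eqref{binomExp} and \eqref{JackExp} I would use that in \eqref{coeffs} the binomial $\binom{{}^{(i-1)}\lambda}{{}^{(i)}\lambda}$ always occurs multiplied by $P_{{}^{(i-1)}\lambda}(1^n)/P_{{}^{(i)}\lambda}(1^n)$, and that in this product the factors indexed by rows $j < p$ (with $p$ the row in which ${}^{(i-1)}\lambda$ exceeds ${}^{(i)}\lambda$) cancel against one another; the surviving linear-in-$\kappa$ factors are then each a nonnegative integer plus $\kappa$ times a positive integer, hence positive for $\kappa > 0$, while any $0/0$ occurring at $\kappa = 0$ is removable.

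The one point I expect to demand genuine care is this last verification: that the purely $\kappa$-dependent denominators of \eqref{coeffs} are nonvanishing for every $\kappa\geq 0$, in particular at the boundary $\kappa = 0$, where individual factors of \eqref{binomExp}, \eqref{JackExp} and $h^\mu$ degenerate even though the products that actually occur do not. A clean way to bypass this bookkeeping is to argue structurally instead: by \eqref{E0Action} the quantity $P_{{}^{(i-1)}\lambda}(1^n)/P_{{}^{(i)}\lambda}(1^n)\cdot\binom{{}^{(i-1)}\lambda}{{}^{(i)}\lambda}$ is a matrix entry, in the monic Jack basis, of the constant-coefficient operator $E_0$; since $E_0$ has constant integer entries in the monomial basis, and the transition matrix between monomials and monic Jack polynomials is rational in $\kappa$ with no poles for $\kappa\geq 0$ (it is the identity at $\kappa = 0$, where $P_\mu = m_\mu$, and for $\kappa > 0$ it is regular because in $P_\mu = \kappa^{|\mu|}(h^\mu)^{-1}J_\mu$ the factor $h^\mu$ does not vanish), these matrix entries are rational in $\kappa$ with no poles for $\kappa\geq 0$. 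Iterating the recurrence \eqref{BesselRecursRel} then exhibits each $u_{\lambda\mu}$ as regular on the whole region apart from the zeros of the factors $e_\lambda - e_{{}^{(i)}\lambda}$, which Proposition~\ref{nonDegeneracyProp} has already ruled out; combined with (b), this yields the claim.
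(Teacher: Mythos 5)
Your proposal is correct and follows essentially the same route as the paper's own proof: rationality read off the explicit expansion of Proposition~\ref{seriesProp} via \eqref{binomExp}, \eqref{JackExp} and the integral form $J_\lambda$, genericity by the same argument as Proposition~\ref{nonDegeneracyProp}, and nonvanishing of the denominators on the region $\kappa\geq 0$, $a<-2(|\lambda|+\kappa(n-1))+1$ from Proposition~\ref{nonDegeneracyProp} together with regularity of the $\kappa$-dependent data and of the Jack polynomials for $\kappa\geq 0$. Your observation that individual factors of \eqref{binomExp} and \eqref{JackExp} degenerate at $\kappa=0$, and the clean fix via the $E_0$ matrix entries of \eqref{E0Action}, simply makes precise what the paper compresses into the word ``inspection''.
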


\begin{proof}
It is immediate from the discussion above that the coefficients $u_{\lambda\mu}$ are rational functions of $a$ and $\kappa$. The first part of the statement can thus be established via an argument similar to that used to prove Proposition \ref{nonDegeneracyProp}. The second part of the statement follows from Proposition \ref{nonDegeneracyProp}, inspection of \eqref{binomExp} and \eqref{JackExp}, and the fact that the Jack polynomials $P_\lambda$ are analytic in $\kappa$ for $\kappa\geq 0$.
\end{proof}

In general, it seems hard to deduce a closed expression for the coefficients $u_{\lambda\mu}$. However, if $\lambda = (k^n)$ for some non-negative integer $k$ the situation simplifies considerably, as the multivariable Bessel polynomials then are given by a terminating generalised hypergeometric series. More precisely, we have the following:

\begin{proposition}
For any non-negative integer $k$,
\begin{equation*}
	Y_{(k^n)}(x;a,\kappa) = C_k(a,\kappa) {}_2F_0(-k,k +a -1 -\kappa(n-1);-x/2;\kappa),
\end{equation*}
where
\begin{equation*}
	C_k(a,\kappa) = \frac{h_{(k^n)}(\kappa)}{\lbrack-k\rbrack^{(\kappa)}_{(k^n)}\lbrack k+a-1-\kappa(n-1)\rbrack^{(\kappa)}_{(k^n)}}.
\end{equation*}
\end{proposition}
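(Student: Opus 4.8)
The plan is to realise ${}_2F_0$ as a confluent limit of the hypergeometric function ${}_2F_1$, to carry the differential equation \eqref{HypergDiffEq} through that limit, and so to arrive at a $D^B$-eigenvalue equation which is then matched against Definition~\ref{besselDef}. The first point to record is that, since the upper parameter $-k$ is a non-positive integer, the series ${}_2F_1(-k,\alpha_2;\beta_1;y;\kappa)$ and ${}_2F_0(-k,\alpha_2;;y;\kappa)$ both terminate: by the definition of $\lbrack\alpha\rbrack^{(\kappa)}_\mu$ one has $\lbrack-k\rbrack^{(\kappa)}_\mu = 0$ as soon as $\mu_1 > k$, while $P_\mu(x_1,\dots,x_n) = 0$ as soon as $\ell(\mu) > n$; hence in either case the sum runs over the finitely many partitions $\mu\subseteq(k^n)$ and defines a symmetric polynomial of degree at most $nk$. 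For all but finitely many $\beta_1$ the denominators $\lbrack\beta_1\rbrack^{(\kappa)}_\mu$ with $\mu\subseteq(k^n)$ are non-zero, and since each is a monic polynomial of degree $|\mu|$ in $\beta_1$ we have $\beta_1^{|\mu|}/\lbrack\beta_1\rbrack^{(\kappa)}_\mu\to1$ as $\beta_1\to\infty$; using the homogeneity $P_\mu(\beta_1 y) = \beta_1^{|\mu|}P_\mu(y)$ it follows that ${}_2F_1(-k,\alpha_2;\beta_1;\beta_1 y;\kappa)\to{}_2F_0(-k,\alpha_2;;y;\kappa)$ coefficientwise in the Jack basis, and — these being polynomials of uniformly bounded degree — together with all partial derivatives, uniformly on compacta.

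Next I would rescale \eqref{HypergDiffEq}. Setting $F(y) = {}_2F_1(-k,\alpha_2;\beta_1;y;\kappa)$ and $G(x) = F(\beta_1 x)$, and using that $D_2$ and $E_1$ preserve degree while $D_1$ and $E_0$ lower it by one, one gets, evaluated at $y = \beta_1 x$, the relations $D_1 F = \beta_1^{-1}D_1 G$, $E_0 F = \beta_1^{-1}E_0 G$, $D_2 F = D_2 G$ and $E_1 F = E_1 G$. Inserting these into \eqref{HypergDiffEq} and letting $\beta_1\to\infty$ — whereupon the $D_1$ term and the $\kappa(n-1)E_0$ term disappear while $\beta_1 E_0$ survives as $E_0$ — shows that $F_0 := {}_2F_0(-k,\alpha_2;;x;\kappa)$ satisfies
\begin{equation*}
	D_2 F_0 - E_0 F_0 + \bigl(\alpha_2 + 1 - k - \kappa(n-1)\bigr)E_1 F_0 = nk\alpha_2\, F_0 .
\end{equation*}
A further change of variables $x\mapsto -x/2$, under which $D_2$ and $E_1$ are unchanged while $E_0$ picks up a factor $-2$, then gives that $\widetilde F_0(x) := {}_2F_0(-k,\alpha_2;;-x/2;\kappa)$ satisfies
\begin{equation*}
	\bigl(D_2 + 2E_0 + (\alpha_2 + 1 - k - \kappa(n-1))E_1\bigr)\widetilde F_0 = nk\alpha_2\, \widetilde F_0 .
\end{equation*}

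Now I would fix $\alpha_2$ so that the operator on the left becomes $D^B = D_2 + 2E_0 + aE_1$ (recall $b=2$), i.e.\ so that $\alpha_2 + 1 - k - \kappa(n-1) = a$; one checks that this is compatible with the requirement $nk\alpha_2 = d_{(k^n)} + a|(k^n)| = e_{(k^n)}$ (by \eqref{BesselEigenvalues}, with $d_{(k^n)} = nk(k-1) + \kappa kn(n-1)$), and that the resulting $\alpha_2$ is the second upper parameter appearing in the proposition. With this choice $\widetilde F_0$ is a symmetric polynomial, an eigenfunction of $D^B$ with eigenvalue $e_{(k^n)}$, and by the first paragraph its expansion in Jack polynomials is supported on $\mu\subseteq(k^n)$; moreover its coefficient on $P_{(k^n)}$ equals $(-1/2)^{nk}\lbrack-k\rbrack^{(\kappa)}_{(k^n)}\lbrack\alpha_2\rbrack^{(\kappa)}_{(k^n)}/h_{(k^n)}$, which is non-zero for the parameter values in question (note $\lbrack-k\rbrack^{(\kappa)}_{(k^n)} = \prod_{i=1}^n\prod_{l=0}^{k-1}(l - k - \kappa(i-1))\ne0$ when $\kappa\ge0$, and similarly for the other factors). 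Dividing $\widetilde F_0$ by this coefficient yields a polynomial of the form $P_{(k^n)} + \sum_{\mu\subset(k^n)}u_{(k^n)\mu}P_\mu$ that is a $D^B$-eigenfunction, hence equal to $Y_{(k^n)}$ by Definition~\ref{besselDef}; the reciprocal of that coefficient is the constant $C_k(a,\kappa)$.

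The crux of the argument is the passage to the limit in the middle step: the naive limit $\beta_1\to\infty$ in \eqref{HypergDiffEq} is singular, because the coefficient of $E_0$ diverges, so one must first absorb the divergence into the dilation $y\mapsto\beta_1 y$ and only then interchange the limit with the differential operators — which is justified here precisely because, the parameter $-k$ being a non-positive integer, we are working with polynomials of bounded degree rather than with genuine power series. The rest is bookkeeping: tracking the factor $-1/2$ and the signs through the two changes of variables, and identifying $nk\alpha_2$ with $e_{(k^n)}$ and the leading Jack coefficient with $C_k(a,\kappa)^{-1}$. One could instead bypass the limit and verify directly, with the help of the specialisation \eqref{JackExp}, the binomial formula \eqref{binomExp}, and \eqref{p1Action} taken at $x = (1^n)$, that the coefficients $(-1/2)^{|\mu|}\lbrack-k\rbrack^{(\kappa)}_\mu\lbrack\alpha_2\rbrack^{(\kappa)}_\mu/h_\mu$ solve the recurrence \eqref{BesselRecursRel} with $\lambda = (k^n)$; this alternative, however, demands a considerably more intricate computation.
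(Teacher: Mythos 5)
Your proposal is correct and follows essentially the same route as the paper's own proof: realise ${}_2F_0$ as the confluent $\beta_1\to\infty$ limit of ${}_2F_1$, carry \eqref{HypergDiffEq} through the rescaling to obtain the $D^B$-eigenvalue equation, and fix the constant by comparing coefficients of $P_{(k^n)}$ (your bounded-degree justification of the limit and the eigenvalue consistency check merely make explicit what the paper leaves implicit). Be aware, though, that the parameter your computation forces, $k+a-1+\kappa(n-1)$, and the constant it yields, $(-2)^{nk}h_{(k^n)}/\bigl(\lbrack-k\rbrack^{(\kappa)}_{(k^n)}\lbrack k+a-1+\kappa(n-1)\rbrack^{(\kappa)}_{(k^n)}\bigr)$, differ from the proposition as printed (which has $-\kappa(n-1)$ and omits the factor $(-2)^{nk}$); since the sign agrees with the paper's own proof text and the $n=1$ comparison with $y_k(x)={}_2F_0(-k,k+a-1;-x/2)$ confirms your versions, these are typos in the stated proposition rather than gaps in your argument.
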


\begin{proof}
Since $(-k)_\lambda = 0$ unless $\lambda\subseteq (k^n)$, we obtain upon setting $p = 2$, $q = 1$ and $\alpha_1 = -k$ in \eqref{HypergSeries} that
\begin{equation*}
	{}_2F_1(-k,\alpha_2;\beta_1;x) = \sum_{\lambda\subseteq (k^n)}\frac{\lbrack-k\rbrack^{(\kappa)}_\lambda \lbrack\alpha_2\rbrack^{(\kappa)}_\lambda}{\lbrack\beta_1\rbrack^{(\kappa)}_\lambda}\frac{P_\lambda(x)}{h_\lambda}. 
\end{equation*}
The fact that the Jack polynomials $P_\lambda$ are homogeneous of degree $|\lambda|$ implies that
\begin{equation*}
	{}_2F_0(-k,\alpha_2;-x/2) = \lim_{\beta_1\rightarrow\infty} {}_2F_1(-k,\alpha_2;\beta_1;-\beta_1 x/2).
\end{equation*}
Substituting $-\beta_1 x/2$ for $x$ in \eqref{HypergDiffEq}, and then taking the limit $\beta_1\rightarrow\infty$, we thus find that 
${}_2F_0(-k,\alpha_2;-x/2)$ satisfies the differential equation
\begin{equation*}
	D_2 F + (\alpha_2 + 1 - k - \kappa(n-1))E_1 F + 2E_0 F = nk\alpha_2 F.
\end{equation*}
It follows that ${}_2F_0(-k,k + a - 1 + \kappa(n-1);-x/2)$ is an eigefunction of $D^B$. We now obtain the statement by comparing leading coefficients of this generalised hypergeometric series and $Y_{(k^n)}$.
\end{proof}

\section{A limit transition from $BC_n$ Jacobi polynomials}
In this section we establish a limit transition from the $BC_n$ Jacobi polynomials to the multivariable Bessel polynomials. We will make use of the representation
\begin{equation}\label{limitRep}
	P^{BC}_\lambda(z) = \prod_{\mu\subset\lambda}\frac{D^{BC} - e^{BC}_\mu}{e^{BC}_\lambda-e^{BC}_\mu}(-4)^{|\lambda|}P_\lambda(t(z))
\end{equation}
for the $BC_n$ Jacobi polynomials $P^{BC}_\lambda$. It is easily verified that the polynomials defined by the right hand side of \eqref{limitRep} indeed satisfy the two defining properties of the $BC_n$ Jacobi polynomials. First of all, it is immediate from \eqref{actionFormulae} that the right hand side of \eqref{limitRep} is triangular in the Jack polynomials $P_\lambda$. In addition, since the operator $\prod_{\mu\subset\lambda}(D^{BC}-e^{BC}_\mu)$ annihilates the subspace spanned by the Jack polynomials $P_\mu$ with $\mu\subset\lambda$, it is also clear that the right hand side of \eqref{limitRep} is an eigenfunction of $D^{BC}$. We mention that this type of representations have been used by Stokman and Koornwinder \cite{SK97}, as well as van Diejen \cite{vD99}, to study limit transitions between different families of multivariable orthogonal polynomials.

It is clear that also the multivariable Bessel polynomials $Y_\lambda$ have a representation of the form \eqref{limitRep}, obtained by substituting $D^B$ for $D^{BC}$, $e^B_\mu$ for $e^{BC}_\mu$, and $P_\lambda(x)$ for $(-4)^{|\lambda|}P_\lambda(t(z))$. As we will show below, there exist a limit transition from the $BC_n$ Jacobi polynomials $P^{BC}_\lambda$ which, in effect, performs these substitutions. Using the fact that
\begin{equation*}
	\coth z = \frac{e^{2z}+1}{e^{2z}-1} = \frac{e^z}{e^z-1} - \frac{1}{e^z+1}
\end{equation*}
we can rewrite the operator $D^{BC}$ as follows:
\begin{multline*}
 D^{BC} = \sum_{i=1}^n \frac{\partial^2}{\partial z_i^2} + \sum_{i=1}^n\left((k_1+2k_2)\frac{e^{z_i}}{e^{z_i}-1} + \left(\frac{k_1}{e^{z_i}-1} - \frac{2k_2}{e^{z_i}+1}\right)\right)\frac{\partial}{\partial z_i}\\ + k_3\sum_{i<j}\left(\frac{e^{z_i}+e^{z_j}}{e^{z_i}-e^{z_j}}\left(\frac{\partial}{\partial z_i} - \frac{\partial}{\partial z_j}\right) + \frac{e^{z_i+z_j}+1}{e^{z_i+z_j}-1}\left(\frac{\partial}{\partial z_i} + \frac{\partial}{\partial z_j}\right)\right).
\end{multline*}
If we set
\begin{equation}\label{kValues}
	k_1 = \frac{1}{2}\left(a - 1 + 2e^\epsilon\right),\quad k_2 = \frac{1}{4}\left(a - 1 - 2e^\epsilon\right),\quad k_3 = \kappa,
\end{equation}
and substitute $(z_1+\epsilon,\ldots,z_n+\epsilon)$ for $(z_1,\ldots,z_n)$, then we obtain the limit
\begin{equation*}
	\lim_{\epsilon\rightarrow\infty}D^{BC} = \sum_{i=1}^n\frac{\partial^2}{\partial z_i^2} + \sum_{i=1}^n\left(a - 1 + 2e^{-z_i}\right)\frac{\partial}{\partial z_i}\\ + 2\kappa\sum_{i\neq j}\frac{e^{z_i}}{e^{z_i}-e^{z_j}}\frac{\partial}{\partial z_i},
\end{equation*}
which is precisely the eigenoperator $D^B$ of the multivariable Bessel polynomials written in the coordinates $(z_1,\ldots,z_n) = (\ln x_1,\ldots,\ln x_n)$. Moreover, we have
\begin{equation*}
	e^{BC}_\mu = e^B_\mu,\quad \lim_{\epsilon\rightarrow\infty}e^{-\epsilon}\sinh^2\frac{z+\epsilon}{2} = \frac{e^z}{4}.
\end{equation*}
By applying the limit in question to \eqref{limitRep} we thus obtain the following limit transition from the $BC_n$ Jacobi polynomials to the multivariable Bessel polynomials:

\begin{proposition}\label{limitProp}
With the parameters $(k_1,k_2,k_3)$ given by \eqref{kValues} we have that
\begin{equation*}
	\lim_{\epsilon\rightarrow\infty} e^{-\epsilon|\lambda|} P^{BC}_\lambda(z_1+\epsilon,\ldots,z_n+\epsilon) = Y_\lambda(e^{z_1},\ldots,e^{z_n}).
\end{equation*}
\end{proposition}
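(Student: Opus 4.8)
The plan is to pass the limit $\epsilon\to\infty$ through the operator representation \eqref{limitRep} of the $BC_n$ Jacobi polynomials, exploiting the analogous representation
\[
	Y_\lambda(x) = \prod_{\mu\subset\lambda}\frac{D^B - e^B_\mu}{e^B_\lambda - e^B_\mu}\,P_\lambda(x)
\]
of the multivariable Bessel polynomials recalled above. First I would rewrite the left-hand side: letting $T_\epsilon$ denote the substitution $z_i\mapsto z_i+\epsilon$ and conjugating \eqref{limitRep} by $T_\epsilon$, one obtains
\[
	P^{BC}_\lambda(z_1+\epsilon,\ldots,z_n+\epsilon) = \prod_{\mu\subset\lambda}\frac{D^{BC}_\epsilon - e^{BC}_\mu}{e^{BC}_\lambda - e^{BC}_\mu}\Bigl((-4)^{|\lambda|}P_\lambda\bigl(t(z_1+\epsilon,\ldots,z_n+\epsilon)\bigr)\Bigr),
\]
where $D^{BC}_\epsilon$ is the operator $D^{BC}$ with each $z_i$ replaced by $z_i+\epsilon$ and the parameters chosen as in \eqref{kValues}. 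Multiplying by the scalar $e^{-\epsilon|\lambda|}$, which commutes with everything, and using that $P_\lambda$ is homogeneous of degree $|\lambda|$ together with $t_i = -\sinh^2 z_i/2$ and $4e^{-\epsilon}\sinh^2\tfrac{z_i+\epsilon}{2} = e^{z_i} - 2e^{-\epsilon} + e^{-z_i-2\epsilon}$, I would absorb this factor into the argument to get
\[
	e^{-\epsilon|\lambda|}P^{BC}_\lambda(z+\epsilon) = \prod_{\mu\subset\lambda}\frac{D^{BC}_\epsilon - e^{BC}_\mu}{e^{BC}_\lambda - e^{BC}_\mu}\,f_\epsilon(z), \qquad f_\epsilon(z) := P_\lambda\bigl(e^{z_1} - 2e^{-\epsilon} + e^{-z_1-2\epsilon},\ldots,e^{z_n} - 2e^{-\epsilon} + e^{-z_n-2\epsilon}\bigr).
\]

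Next I would take the limit termwise. As $\epsilon\to\infty$ the function $f_\epsilon$ converges to $P_\lambda(e^{z_1},\ldots,e^{z_n})$ together with all of its derivatives, uniformly on compact subsets of $\mathbb{R}^n$; by the coefficient computation already carried out in the text the coefficients of $D^{BC}_\epsilon$ converge, again in $C^\infty$ uniformly on compacts, to those of $D^B$ expressed in the coordinates $(z_1,\ldots,z_n) = (\ln x_1,\ldots,\ln x_n)$; and $e^{BC}_\mu = e^B_\mu$ is independent of $\epsilon$, with $\prod_{\mu\subset\lambda}(e^B_\lambda - e^B_\mu)\neq 0$ by the standing nondegeneracy assumption. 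Since the coefficients of a composition of finitely many second-order differential operators are polynomials in the coefficients of the factors and their derivatives, it follows that $\prod_{\mu\subset\lambda}(D^{BC}_\epsilon - e^{BC}_\mu)$ converges in $C^\infty_{\mathrm{loc}}$ to $\prod_{\mu\subset\lambda}(D^B - e^B_\mu)$. Applying the elementary fact that, for differential operators of a fixed order, $C^\infty_{\mathrm{loc}}$-convergence of the coefficients together with $C^\infty_{\mathrm{loc}}$-convergence of the argument implies locally uniform convergence of the image, one concludes
\[
	\lim_{\epsilon\to\infty} e^{-\epsilon|\lambda|}P^{BC}_\lambda(z_1+\epsilon,\ldots,z_n+\epsilon) = \prod_{\mu\subset\lambda}\frac{D^B - e^B_\mu}{e^B_\lambda - e^B_\mu}\,P_\lambda(e^{z_1},\ldots,e^{z_n}) = Y_\lambda(e^{z_1},\ldots,e^{z_n}),
\]
the last equality being the representation of $Y_\lambda$ stated above.

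The one step that requires genuine care is this interchange of the limit with the application of the product operator $\prod_{\mu\subset\lambda}(D^{BC}_\epsilon - e^{BC}_\mu)$, whose order equals twice the number of partitions strictly contained in $\lambda$ and both of whose coefficients and of whose argument depend on $\epsilon$. I expect this to be the main obstacle, but it is dispatched by the uniform-on-compacts convergence, with all derivatives, of the coefficients and of $f_\epsilon$: everything in sight is built from exponentials and their reciprocals, so the required estimates — denominators bounded away from zero on compact sets once $\epsilon$ is large, and so on — are immediate, and no estimates near the boundary of the domain are needed since we only claim pointwise, indeed locally uniform, convergence of the polynomials $Y_\lambda(e^z)$. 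Finally, a minor bookkeeping point worth recording: the same nondegeneracy hypothesis that makes $Y_\lambda$ well-defined also ensures $e^{BC}_\lambda\neq e^{BC}_\mu$ for all $\mu\subset\lambda$ at the parameter values \eqref{kValues}, so that the representation \eqref{limitRep} is indeed available there.
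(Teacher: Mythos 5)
Your proposal is correct and follows essentially the same route as the paper: it passes the limit $\epsilon\to\infty$ through the operator representation \eqref{limitRep}, using the coefficient limits $D^{BC}\to D^B$ (in logarithmic coordinates), $e^{BC}_\mu=e^B_\mu$, and $e^{-\epsilon}\sinh^2\frac{z+\epsilon}{2}\to e^z/4$ together with the homogeneity of $P_\lambda$, exactly as in the text. The only difference is that you spell out the justification of interchanging the limit with the application of the product operator (locally uniform $C^\infty$ convergence of coefficients and argument), a point the paper leaves implicit.
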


\section{Higher order eigenoperators}
In this section we prove that the multivariable Bessel polynomials are common eigenfunctions of $n$ algebraically independent partial differential operators. We will obtain this result by applying the limit transition in Proposition \ref{limitProp} to a sequence of eigenoperators for the $BC_n$ Jacobi polynomials constructed by Heckman \cite{Hec91}. We remark that Heckman obtained such eigenoperators for the Jacobi polynomials associated with any (integral) root system. In addition, we study the structure of the commutative algebra generated by these higher order eigenoperators of the multivariable Bessel polynomials, and establish an Harish-Chandra type isomorphism onto the algebra of even and permutation invariant polynomials in $n$ variables.

We start by briefly reviewing Heckman's construction, specialised to the root system $BC_n$. In doing so we will essentially follow the exposition in Section 2.3 in Matsuo \cite{Mat92}. The first step consists of inductively defining sequences of partial differential operators $D^{BC}_{d,\pm i}$, where $d\in\mathbb{N}$ and $i = 1,\ldots,n$, by setting $D^{BC}_{0,\pm i} = 1$ and
\begin{multline}\label{DBCDef}
	D^{BC}_{d,\pm i} = \pm\frac{\partial}{\partial z_i}D^{BC}_{d-1,\pm i} \pm \left(\frac{k_1}{2}\coth\frac{1}{2}z_i + k_2\coth z_i\right)\left(D^{BC}_{d-1,\pm i} - D^{BC}_{d-1,\mp i}\right)\\ \pm\frac{k_3}{2}\sum_{j\neq i}\coth\frac{1}{2}(z_i - z_j)\left(D^{BC}_{d-1,\pm i} - D^{BC}_{d-1,\pm j}\right)\\ \pm\frac{k_3}{2}\sum_{j\neq i} \coth\frac{1}{2}(z_i + z_j)\left(D^{BC}_{d-1,\pm i} - D^{BC}_{d-1,\mp j}\right)
\end{multline}
for $d>0$. If we sum over $\pm i$ then we obtain the partial differential operators
\begin{equation*}
	D^{BC}_d = \frac{1}{2}\sum_{\epsilon=\pm 1}\sum_{i=1}^n D^{BC}_{2d,\epsilon i},
\end{equation*}
where we have inserted a factor $1/2$ to simplify certain formulae below. We mention that the corresponding sum of the operators $D^{BC}_{2d+1,\pm i}$ is zero for all $d\in\mathbb{N}$. This is a direct consequence of the fact that $D^{BC}_{d,\mp i} = (-1)^d D^{BC}_{d,\pm i}$, which is easily inferred from the definition above. It was shown by Heckman \cite{Hec91} that the $BC_n$ Jacobi polynomials are common eigenfunctions of the operators $D^{BC}_d$.

We proceed to apply the limit transition to the multivariable Bessel polynomials. Accordingly, we let the parameters $(k_1,k_2,k_3)$ be given by \eqref{kValues} and substitute $(z_1+\epsilon,\ldots,z_n+\epsilon)$ for $(z_1,\ldots,z_n)$ in \eqref{DBCDef}. In analogy with the discussion in Section 4, we then obtain the limit
\begin{multline}\label{DBDef}
	D^B_{d,\pm i} = \pm x_i\frac{\partial}{\partial x_i}D^B_{d-1,\pm i} \pm \frac{1}{2}(a - 1 + 2x_i^{-1})\left(D^B_{d-1,\pm i} - D^B_{d-1,\mp i}\right)\\ \pm\frac{\kappa}{2}\sum_{j\neq i}\frac{x_i+x_j}{x_i-x_j}\left(D^B_{d-1,\pm i} - D^B_{d-1,\pm j}\right) \pm\frac{\kappa}{2}\sum_{j\neq i}\left(D^B_{d-1,\pm i} - D^B_{d-1,\mp j}\right)
\end{multline}
with $D^B_{0,\pm i} = 1$ and, as before, $(x_1,\ldots,x_n) = (e^{z_1},\ldots,e^{z_n})$. The corresponding limit of the operators $D^{BC}_d$ is simply given by
\begin{equation*}
	D^B_d = \frac{1}{2}\sum_{\epsilon=\pm 1}\sum_{i=1}^n D^B_{2d,\epsilon i}.
\end{equation*}
It is a straightforward exercise to verify that $D^B_1 = D^B$, i.e., that the second order eigenoperator $D^B$ of the multivariable Bessel polynomials is the first element in this sequence of operators. Referring to Proposition \ref{limitProp} it is now easy to prove the following:

\begin{theorem}\label{HPDOThm}
The multivariable Bessel polynomials $Y_\lambda$ satisfy a system of differential equations
\begin{equation*}
	D^B_dY_\lambda = e^B_d(\lambda)Y_\lambda,\quad d = 1,2,\ldots,
\end{equation*}
with eigenvalues of the form
\begin{equation*}
	e^B_d(\lambda) = \lambda_1^{2d} +\cdots + \lambda_n^{2d} + \text{l.d.}
\end{equation*}
(where l.d. stands for terms of lower degree). Moreover, the differential operators $D^B_d$ with $d = 1,\ldots,n$ are algebraically independent.
\end{theorem}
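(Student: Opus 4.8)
The plan is to obtain the eigenvalue equations from the limit transition of Proposition~\ref{limitProp} applied to Heckman's theorem that the $BC_n$ Jacobi polynomials are joint eigenfunctions of the operators $D^{BC}_d$, and to read off the algebraic independence from the principal symbols of the $D^B_d$. I would first treat the eigenvalue equations. By Heckman~\cite{Hec91} one has $D^{BC}_d P^{BC}_\lambda = e^{BC}_d(\lambda) P^{BC}_\lambda$, where $e^{BC}_d(\lambda)$ is a polynomial in $\lambda$ whose highest-degree term is $\lambda_1^{2d} + \cdots + \lambda_n^{2d}$ and which, by the Harish-Chandra type form of these eigenvalues (equivalently, by the structure of the deformed Weyl vector for $BC_n$), depends on $k_1$ and $k_2$ only through $k_1 + 2k_2$ --- as is already apparent for $d = 1$ from the eigenvalue $d_\lambda + (k_1 + 2k_2 + 1)|\lambda|$ of $D^{BC}$ recorded in Section~2. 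Under the specialisation \eqref{kValues} one has $k_1 + 2k_2 = a - 1$, independent of $\epsilon$, so $e^{BC}_d(\lambda)$ is independent of $\epsilon$; denoting it $e^B_d(\lambda)$, it has the leading term asserted in the theorem. Substituting $(z_1 + \epsilon, \ldots, z_n + \epsilon)$ for $(z_1, \ldots, z_n)$ and multiplying by the constant $e^{-\epsilon|\lambda|}$ turns the eigenvalue relation into an identity between the shifted operator $D^{BC}_d$ and the polynomial $e^{-\epsilon|\lambda|}P^{BC}_\lambda(z_1 + \epsilon, \ldots, z_n + \epsilon)$ with the same ($\epsilon$-independent) eigenvalue. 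Letting $\epsilon \to \infty$, the shifted operator converges coefficientwise to $D^B_d$ --- this is precisely the passage from \eqref{DBCDef} to \eqref{DBDef}, carried out just as in Section~4 --- while by Proposition~\ref{limitProp} the polynomial converges to $Y_\lambda(e^{z_1}, \ldots, e^{z_n})$; since a relation of the form $(\text{operator})(\text{polynomial}) = (\text{scalar})(\text{polynomial})$ is preserved under such limits, we obtain $D^B_d Y_\lambda = e^B_d(\lambda) Y_\lambda$. This yields the first two assertions (for $d = 1$ it recovers $D^B_1 = D^B$ with eigenvalue $e_\lambda$, up to the additive constant).

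For the algebraic independence I would pass to principal symbols, writing $\xi_i$ for the symbol variable dual to $x_i$. An induction on $d$ using \eqref{DBDef} --- in which every term besides $\pm x_i\frac{\partial}{\partial x_i}D^B_{d-1,\pm i}$ has differential order at most $d - 1$ --- shows that $D^B_{d,\pm i}$ has order $d$ with top-order part $\big(\pm x_i\frac{\partial}{\partial x_i}\big)^d$, starting from $D^B_{0,\pm i} = 1$; hence $D^B_d = \frac{1}{2}\sum_{\epsilon = \pm 1}\sum_{i=1}^n D^B_{2d,\epsilon i}$ has order $2d$ with principal symbol $s_d := \sum_{i=1}^n x_i^{2d}\xi_i^{2d}$. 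Now if $Q(D^B_1, \ldots, D^B_n) = 0$ for some nonzero $Q \in \mathbb{C}[t_1, \ldots, t_n]$, then, assigning weight $2d$ to $t_d$ and letting $\bar Q \ne 0$ be the leading weighted-homogeneous part of $Q$, comparison of principal symbols gives $\bar Q(s_1, \ldots, s_n) = 0$ in $\mathbb{C}[x, \xi]$. It thus remains to check that $s_1, \ldots, s_n$ are algebraically independent over $\mathbb{C}(x_1, \ldots, x_n)$, which follows from the Jacobian criterion: $\det\big(\partial s_d/\partial\xi_j\big)_{d,j=1}^n$, after pulling out the row factors $2d$ and the column factors $x_j^2\xi_j$, equals a nonzero scalar times $\big(\prod_{j=1}^n x_j^2\xi_j\big)$ times the Vandermonde determinant $\prod_{1 \le j < k \le n}(x_k^2\xi_k^2 - x_j^2\xi_j^2)$, and so is not identically zero. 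Hence $\bar Q = 0$, contradicting $Q \ne 0$, and the operators $D^B_1, \ldots, D^B_n$ are algebraically independent. (One could instead argue that the eigenvalue tuples $(e^B_1(\lambda), \ldots, e^B_n(\lambda))$ are Zariski dense in $\mathbb{C}^n$, but the symbol computation is shorter and, unlike that argument, is valid for every admissible pair $(a, \kappa)$.)

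The only point that genuinely needs care is the legitimacy of interchanging the limit $\epsilon \to \infty$ with the eigenvalue relation in the first step. The convergence $D^{BC}_d \to D^B_d$ is coefficientwise and uniform on compacta away from the hyperplanes $x_i = x_j$ --- it is the very convergence already used to define \eqref{DBDef}, established exactly as for $d = 1$ in Section~4 --- so the interchange is harmless, \emph{provided} one knows that the eigenvalues $e^{BC}_d(\lambda)$ stay bounded along the specialisation \eqref{kValues}; were they to diverge, nothing would survive the limit. That boundedness is precisely the statement that $e^{BC}_d(\lambda)$ depends on $(k_1, k_2)$ only through $k_1 + 2k_2$, which is a structural feature of the Harish-Chandra homomorphism for the root system $BC_n$ rather than something one wishes to check by hand for each $d$; it is the one input of the proof worth isolating.
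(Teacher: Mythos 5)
Your proposal is correct and takes essentially the same route as the paper: the eigen-equations are obtained by applying the limit transition of Proposition~\ref{limitProp} to Heckman's eigenoperators $D^{BC}_d$, and the leading-term/power-sum argument gives both the form of the eigenvalues and the algebraic independence --- your principal-symbol computation with the Jacobian criterion is just the expanded version of the paper's one-line remark that the leading term of $D^B_d$ is $\sum_i\bigl(x_i\frac{\partial}{\partial x_i}\bigr)^{2d}$ and that power sums are algebraically independent. The only substantive point of divergence is the input you isolate, namely that $e^{BC}_d(\lambda)$ depends on $(k_1,k_2)$ only through $k_1+2k_2$; this is in fact true (the Harish--Chandra image of $D^{BC}_d$ is read off from the constant term of its expansion in the positive chamber, where $\coth\to 1$ produces the combination $k_1/2+k_2$, and $\rho^{BC}$ also involves only $k_1+2k_2$ and $k_3$), but your appeal to the ``structure of the deformed Weyl vector'' does not by itself establish it, since the invariant polynomial $\gamma^{BC}(D^{BC}_d)$ could a priori have $k$-dependent lower-order coefficients. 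More importantly, the input is dispensable: since the shifted operators converge coefficientwise and $e^{-\epsilon|\lambda|}P^{BC}_\lambda(z+\epsilon)$ converges to the nonzero polynomial $Y_\lambda(e^z)$ in a fixed finite-dimensional space, the scalars $e^{BC}_d(\lambda)$ along the specialisation \eqref{kValues} converge automatically (read them off as a ratio, e.g.\ from the coefficient of $P_\lambda$, or by evaluating at a point where the limit does not vanish), which is what the paper's ``direct consequence'' implicitly uses; also, a small quibble, $D^B_1=D^B$ holds exactly, with no additive constant.
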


\begin{proof}
The fact that the multivariable Bessel polynomials $Y_\lambda$ are eigenfunctions of the operators $D^B_d$ is a direct consequence of Proposition \ref{limitProp} and the discussion above. We observe that, in the order of partial differential operators $x_j\partial/\partial x_j$ with $j = 1,\ldots,n$, the leading term of $D^B_d$ is
\begin{equation*}
	\left(x_1\frac{\partial}{\partial x_1}\right)^{2d} +\cdots + \left(x_n\frac{\partial}{\partial x_n}\right)^{2d}.
\end{equation*}
This clearly implies the stated form of the eigenvalues $e^B_d$. Since the power sums $p_d(x) = x_1^d +\cdots + x_n^d$ with $d = 1,\ldots,n$ are algebraically independent (see e.g.~Section I.2 in Macdonald \cite{Mac95}), we thus obtain the statement.
\end{proof}

\begin{corollary}\label{commutativityCor}
The differential operators $D^B_d$ with $d = 1,\ldots,n$ preserve $\Lambda_n$, and are thus well-defined as linear operators on $\Lambda_n$. As such, they commute with each other.
\end{corollary}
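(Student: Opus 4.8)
The plan is to establish both assertions first for generic values of the parameters, where the multivariable Bessel polynomials supply a convenient common eigenbasis, and then to remove the genericity hypothesis by a standard density argument.

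I would begin by fixing a non-negative integer $m$ and working inside the finite-dimensional space $\Lambda_n^{\leq m}$. By Proposition \ref{nonDegeneracyProp} there is an open dense subset of parameter space — the intersection, over the finitely many partitions $\lambda$ with $|\lambda|\leq m$ and $\ell(\lambda)\leq n$, of the sets produced there — on which every such $Y_\lambda$ is well-defined; and since $Y_\lambda = P_\lambda + \sum_{\mu\subset\lambda}u_{\lambda\mu}P_\mu$, these $Y_\lambda$ then form a basis of $\Lambda_n^{\leq m}$. By Theorem \ref{HPDOThm} each $D^B_d$ with $d = 1,\ldots,n$ acts on this basis by $D^B_d Y_\lambda = e^B_d(\lambda)Y_\lambda$; hence $D^B_d(\Lambda_n^{\leq m})\subseteq\Lambda_n^{\leq m}$, and since $m$ was arbitrary, $D^B_d(\Lambda_n)\subseteq\Lambda_n$, for such parameter values. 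Any two of the operators share the eigenbasis $\{Y_\lambda\}$, so they are simultaneously diagonalisable and therefore $[D^B_d,D^B_{d'}] = 0$ on $\Lambda_n^{\leq m}$, and hence on $\Lambda_n$, for generic $a$ and $\kappa$.

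To remove the genericity assumption I would exploit the explicit structure of the recursion \eqref{DBDef}. The operators $D^B_{d,\pm i}$ there have coefficients that are rational in $x$, with poles only along the hyperplanes $x_i = x_j$, and that depend polynomially on $a$ and $\kappa$; moreover \eqref{DBDef} is manifestly $S_n$-equivariant, conjugation by $\sigma\in S_n$ carrying $D^B_{d,\pm i}$ to $D^B_{d,\pm\sigma(i)}$ and thus fixing the symmetrised sum $D^B_d$. Fix $\mu$; by the previous paragraph $D^B_d m_\mu$ is, for generic $(a,\kappa)$, a symmetric polynomial, so the requirement that its denominators cancel imposes only finitely many polynomial conditions on $(a,\kappa)$, all of which hold on a dense set and hence identically. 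Therefore $D^B_d m_\mu\in\Lambda_n$ for all values of $a$ and $\kappa$ and all $\mu$, so $D^B_d$ preserves $\Lambda_n$ unconditionally. Once this is known, $[D^B_d,D^B_{d'}]m_\mu$ is a genuine element of $\Lambda_n$ with coefficients polynomial in $(a,\kappa)$ that vanishes on a dense set, hence vanishes identically, which yields commutativity for all values of the parameters.

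The step I expect to cause the most trouble is the denominator bookkeeping underlying the density argument: one must check that, uniformly in $(a,\kappa)$, the singular locus of $D^B_d$ is contained in $\bigcup_{i<j}\{x_i = x_j\}$ with pole orders bounded by a fixed constant, so that a single product of factors $x_i - x_j$ clears all denominators. This can be extracted from \eqref{DBDef} by induction on $d$, but it is the one genuinely computational point. An alternative that avoids it is to pass both properties through the limit transition of Proposition \ref{limitProp}: by Heckman \cite{Hec91} (see also Matsuo \cite{Mat92}) the operators $D^{BC}_d$ preserve the algebra of even, permutation invariant trigonometric polynomials and pairwise commute for all values of $k_1,k_2,k_3$, and each preserves the degree filtration; restricting to a graded piece and letting $\epsilon\to\infty$ is then matrix convergence in a fixed finite-dimensional space, which transports both properties to the operators $D^B_d$.
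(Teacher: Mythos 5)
Your proposal is correct and follows essentially the same route as the paper: fix $m$, use the fact that for suitable (there: $\kappa\geq 0$ with $a$ satisfying \eqref{aInequality}; yours: generic) parameter values the well-defined $Y_\lambda$ with $|\lambda|\leq m$ form an eigenbasis of $\Lambda_n^{\leq m}$ for the $D^B_d$, giving preservation and commutativity there, and then extend to all parameter values using the polynomial/analytic dependence of the operators' coefficients on $a$ and $\kappa$. The only difference is cosmetic: you spell out the denominator-clearing and Zariski-density bookkeeping that the paper compresses into the remark that the coefficients of $D^B_d$ are analytic in $\kappa$ and $a$, and you note an alternative via the $BC_n$ limit, neither of which changes the substance of the argument.
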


\begin{proof}
We fix a non-negative integer $m$, and consider the linear subspace $\Lambda^{\leq m}_n$. We assume that $\kappa\geq 0$ and that $a$ satisfies the condition \eqref{aInequality}. According to Proposition \ref{nonDegeneracyProp}, the multivariable Bessel polynomials $Y_\lambda$ with $|\lambda|\leq m$ are then well-defined. Moreover, their triangular structure in the Jack polynomials implies that they span $\Lambda^{\leq m}_n$; c.f.~Section I.6 in Macdonald \cite{Mac95}. Under the assumed conditions on $\kappa$ and $a$, the differential operators $D^B_d$ are thus well-defined, and pairwise commutative, as linear operators on $\Lambda^{\leq m}_n$. Since the coefficients of each $D^B_d$ are analytic in $\kappa$ and $a$, this fact immediately extends to arbitrary (complex) parameter values. Since $m$ was chosen arbitrarily, the statement thus follows.
\end{proof}

Theorem \ref{HPDOThm} provides rather little information on the eigenvalues $e^B_d$. In particular, it reveals nothing of their dependence on the parameters $a$ and $\kappa$. Indeed, in general, it seems that this dependence is rather complicated. However, by applying the limit transition in Proposition \ref{limitProp} to results obtained by Heckman \cite{Hec91} (see Theorem 3.11) and Heckman and Opdam \cite{HO87} (see Proposition 2.9) on the Jacobi polynomials associated with root systems, it is relatively easy to establish the existence of eigenoperators of the multivariable Bessel polynomials which have eigenvalues of a very simple form. On the other hand, these eigenoperators do not seem to have a simple explicit expression.

In order to make these remarks precise we introduce the commutative algebra of differential operators $\mathscr{D}_n = \mathbb{C}\lbrack D^B_1,\ldots,D^B_n\rbrack$. We let $\rho^B = (\rho^B_1,\ldots,\rho^B_n)$ be the vector given by
\begin{equation}\label{rhoBVector}
	\rho^B_i = \kappa(n - i) + (a - 1)/2,\quad i = 1,\ldots,n.
\end{equation}
To each $D\in\mathscr{D}_n$ we associate a polynomial $\gamma(D)\in\mathbb{C}\lbrack x_1,\ldots,x_n\rbrack$ by requiring that
\begin{equation}\label{gammaDef}
	D Y_\lambda = \gamma(D)(\lambda+\rho^B)Y_\lambda
\end{equation}
for all partitions $\lambda = (\lambda_1,\ldots,\lambda_n)$. Since $\gamma(D)(\lambda+\rho^B)$ is given by a polynomial in $\lambda$, it is clear that $\gamma(D)$ is indeed uniquely defined by \eqref{gammaDef}. We will use the notation $\mathbb{C}\lbrack x_1,\ldots,x_n\rbrack^{\mathbb{Z}^n_2\rtimes S_n}$ for the algebra of even and permutation symmetric polynomials. Then we have the following:

\begin{lemma}\label{algLemma}
For all $D\in\mathscr{D}_n$,
\begin{equation*}
	\gamma(D)\in\mathbb{C}\lbrack x_1,\ldots,x_n\rbrack^{\mathbb{Z}^n_2\rtimes S_n}.
\end{equation*}
\end{lemma}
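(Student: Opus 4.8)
The statement asserts that for every $D$ in the commutative algebra $\mathscr{D}_n$, the polynomial $\gamma(D)$ is invariant under the hyperoctahedral group $\mathbb{Z}_2^n\rtimes S_n$ acting on the variables $x_1,\dots,x_n$ (i.e., under permutations and sign changes). The plan is to reduce this to the analogous, already-known statement for the $BC_n$ Jacobi polynomials via the limit transition of Proposition \ref{limitProp}, and then argue that the limit preserves the invariance. Concretely, I would proceed as follows.

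First I would recall Heckman's result (Heckman \cite{Hec91}, together with Heckman and Opdam \cite{HO87}, Proposition 2.9): for the commutative algebra $\mathscr{D}^{BC}_n = \mathbb{C}[D^{BC}_1,\dots,D^{BC}_n]$ generated by the operators $D^{BC}_d$, the eigenvalue assignment $D^{BC}P^{BC}_\lambda = \gamma^{BC}(D^{BC})(\lambda+\rho^{BC})P^{BC}_\lambda$, where $\rho^{BC}$ is the usual half-sum of positive roots with multiplicities (so $\rho^{BC}_i = \kappa(n-i) + (k_1 + 2k_2)/2 + \tfrac12 k_1$ or the analogous $BC_n$ expression), defines a Harish-Chandra type homomorphism $\gamma^{BC}$ whose image is exactly $\mathbb{C}[x_1,\dots,x_n]^{\mathbb{Z}_2^n\rtimes S_n}$. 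In particular $\gamma^{BC}(D^{BC})$ is $W$-invariant for every $D^{BC}\in\mathscr{D}^{BC}_n$. Next I would check that under the parameter specialisation \eqref{kValues}, with the shift $(z_1+\epsilon,\dots,z_n+\epsilon)$, the vector $\rho^{BC}$ limits to (or equals, up to an $\epsilon$-dependent shift that washes out) the vector $\rho^B$ of \eqref{rhoBVector}; since $k_1+2k_2 = a-1$ this is a direct computation of the $\epsilon\to\infty$ limit of the $BC_n$ deformation parameters.

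The core step is then to transport the eigenvalue data through the limit. By construction $D^B_d = \lim_{\epsilon\to\infty} D^{BC}_d$ (after the coordinate shift and with the parameters of \eqref{kValues}), and by Proposition \ref{limitProp} the Jacobi polynomials $P^{BC}_\lambda$ converge, after rescaling, to $Y_\lambda$. Hence for each $d$ the eigenvalue $e^{BC}_d(\lambda)$ of $D^{BC}_d$ on $P^{BC}_\lambda$ converges to the eigenvalue $e^B_d(\lambda)$ of $D^B_d$ on $Y_\lambda$. Since $\gamma^{BC}(D^{BC}_d)$ is a $W$-invariant polynomial and $e^{BC}_d(\lambda) = \gamma^{BC}(D^{BC}_d)(\lambda + \rho^{BC})$, taking the limit shows that $e^B_d(\lambda)$ equals the value at $\lambda + \rho^B$ of the limit polynomial $\lim_{\epsilon\to\infty}\gamma^{BC}(D^{BC}_d)$, which, being a limit of $W$-invariant polynomials of bounded degree, is again $W$-invariant. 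Thus $\gamma(D^B_d)$ is $W$-invariant for $d = 1,\dots,n$. Because $\gamma$ is an algebra homomorphism $\mathscr{D}_n \to \mathbb{C}[x_1,\dots,x_n]$ (the eigenvalue of a product of operators is the product of eigenvalues, as these operators commute and are simultaneously diagonalised on the $Y_\lambda$) and the $D^B_d$ generate $\mathscr{D}_n$, the invariance of the generators' images propagates to all of $\mathscr{D}_n$, giving $\gamma(D)\in\mathbb{C}[x_1,\dots,x_n]^{\mathbb{Z}_2^n\rtimes S_n}$ for every $D$.

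The main obstacle I expect is the bookkeeping in the limit-of-polynomials argument: I must be sure that the eigenvalue polynomials $\gamma^{BC}(D^{BC}_d)$ have degree bounded uniformly in $\epsilon$ (which follows from Theorem \ref{HPDOThm}, since the top term is fixed) and that the coefficients converge, so that the pointwise limit over $\lambda\in\mathbb{Z}^n_{\ge 0}$ actually determines a polynomial and that polynomial is $W$-invariant — one needs enough lattice points to pin down a polynomial of the given degree, which is fine since $\mathbb{Z}^n_{\ge 0}$ is Zariski-dense. A secondary subtlety is matching the shift: the $\epsilon$-shift in the coordinates induces a translation in the spectral variable, and I must confirm it is exactly absorbed into the passage from $\rho^{BC}$ to $\rho^B$ rather than leaving a residual $\epsilon$-dependence; this is the point where the specific choice \eqref{kValues} (engineered precisely so that $k_1+2k_2 = a-1$) does the work. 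Once these points are nailed down, the homomorphism property of $\gamma$ and the fact that $\{D^B_d\}_{d=1}^n$ generate $\mathscr{D}_n$ close the argument routinely.
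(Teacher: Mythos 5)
Your proposal is correct and follows essentially the same route as the paper: invoke the Heckman--Opdam result that $\gamma^{BC}(D)$ is $\mathbb{Z}_2^n\rtimes S_n$-invariant, and transport it to the Bessel case through the limit transition of Proposition \ref{limitProp} together with the fact that $D^B_d=\lim_{\epsilon\to\infty}D^{BC}_d$. The only simplification you miss is that with the paper's normalisation $\rho^{BC}_i=k_3(n-i)+(k_1+2k_2)/2$ and the choice \eqref{kValues} one has $k_1+2k_2=a-1$, so $\rho^{BC}=\rho^B$ exactly (no $\epsilon$-dependent shift to wash out); your extra bookkeeping about degree bounds and Zariski density of the spectral points is sound but not needed beyond what the paper's ``direct consequence'' claim tacitly uses.
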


\begin{proof}
We will obtain the statement by applying Proposition \ref{limitProp} to the analogous result for the $BC_n$ Jacobi polynomials. We start by recalling this latter result. For that we require the vector $\rho^{BC} = (\rho^{BC}_1,\ldots,\rho^{BC}_n)$ defined by
\begin{equation*}
	\rho^{BC}_i = k_3(n - i) + (k_1 + 2k_2)/2,\quad i = 1,\ldots,n.
\end{equation*}
For each differential operator $D\in\mathbb{C}\lbrack D^{BC}_1,\ldots,D^{BC}_n\rbrack$ we let $\gamma^{BC}(D)\in\mathbb{C}\lbrack x_1,\ldots,x_n\rbrack$ be defined by
\begin{equation}\label{BCGammaDef}
	DP^{BC}_\lambda = \gamma^{BC}(D)(\lambda + \rho^{BC})P^{BC}_\lambda
\end{equation}
for all partitions $\lambda = (\lambda_1,\ldots,\lambda_n)$. Proposition 2.9 in Heckman and Opdam \cite{HO87} then implies that $\gamma^{BC}(D)\in\mathbb{C}\lbrack x_1,\ldots,x_n\rbrack^{\mathbb{Z}^n_2\rtimes S_n}$. We observe that with the parameters $(k_1,k_2,k_3)$ given by \eqref{kValues} we have $\rho^{BC} = \rho^B$. We now obtain the statement as a direct consequence of Proposition \ref{limitProp} and the discussion preceding Theorem \ref{HPDOThm}.
\end{proof}

We can thus define a so-called Harish-Chandra algebra homomorphism
\begin{equation*}
	\gamma: \mathscr{D}_n\rightarrow \mathbb{C}\lbrack x_1,\ldots,x_n\rbrack^{\mathbb{Z}^n_2\rtimes S_n}
\end{equation*}
by letting each $D\in\mathscr{D}_n$ be mapped to $\gamma(D)$. Just as in the case of the $BC_n$ Jacobi polynomials, the algebra homomorphism $\gamma$ thus defined is in fact an isomorphism of algebras. That it is injective is immediate from the fact that the operators $D^B_d$ are algebraically independent, and that it is surjective is a direct consequence of the following analogue of the first part of Theorem 3.11 in Heckman \cite{Hec91}:

\begin{proposition}\label{HPDOProp}
For any polynomial $p\in\mathbb{C}\lbrack x_1,\ldots,x_n\rbrack^{\mathbb{Z}^n_2\rtimes S_n}$ there exists a differential operator $D_p\in\mathscr{D}_n$ such that
\begin{equation*}
	D_pY_\lambda = p(\lambda+\rho^B)Y_\lambda
\end{equation*}
for all partitions $\lambda = (\lambda_1,\ldots,\lambda_n)$.
\end{proposition}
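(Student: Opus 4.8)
The plan is to prove that the Harish-Chandra homomorphism $\gamma$ constructed just above is \emph{surjective}; the proposition then follows immediately, since given $p\in\mathbb{C}\lbrack x_1,\ldots,x_n\rbrack^{\mathbb{Z}^n_2\rtimes S_n}$ one simply takes $D_p\in\mathscr{D}_n$ with $\gamma(D_p)=p$, and \eqref{gammaDef} gives $D_pY_\lambda=p(\lambda+\rho^B)Y_\lambda$ for all $\lambda$. I would deduce surjectivity directly from Theorem \ref{HPDOThm} and Lemma \ref{algLemma}, together with the classical description of the hyperoctahedral invariants, rather than transferring the corresponding statement of Heckman \cite{Hec91} via Proposition \ref{limitProp} (although that route is also available).

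First I would pin down the top-degree parts of the polynomials $\gamma(D^B_d)$ for $d=1,\ldots,n$. By Theorem \ref{HPDOThm} we have $D^B_dY_\lambda=e^B_d(\lambda)Y_\lambda$ with $e^B_d(\lambda)=\lambda_1^{2d}+\cdots+\lambda_n^{2d}+\text{l.d.}$, so by \eqref{gammaDef} the polynomial $\gamma(D^B_d)$ is the one sending $\lambda+\rho^B$ to $e^B_d(\lambda)$; substituting $\lambda=x-\rho^B$ yields
\begin{equation*}
	\gamma(D^B_d)(x)=x_1^{2d}+\cdots+x_n^{2d}+(\text{terms of degree}<2d).
\end{equation*}
By Lemma \ref{algLemma} each $\gamma(D^B_d)$ lies in $\mathbb{C}\lbrack x_1,\ldots,x_n\rbrack^{\mathbb{Z}^n_2\rtimes S_n}$, which by Chevalley's theorem is freely generated by the even power sums $p_{2d}(x)=x_1^{2d}+\cdots+x_n^{2d}$, $d=1,\ldots,n$ (indeed $\mathbb{C}\lbrack x\rbrack^{\mathbb{Z}^n_2}=\mathbb{C}\lbrack x_1^2,\ldots,x_n^2\rbrack$, whose $S_n$-invariants are the symmetric functions in $x_1^2,\ldots,x_n^2$; c.f.\ Section I.2 in Macdonald \cite{Mac95}). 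Hence $\gamma(D^B_d)=p_{2d}+(\text{lower degree})$.

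Next I would run a degree induction to show that $\gamma(D^B_1),\ldots,\gamma(D^B_n)$ already generate $\mathbb{C}\lbrack x\rbrack^{\mathbb{Z}^n_2\rtimes S_n}$ as an algebra; since $\gamma$ is an algebra homomorphism this means $\gamma(\mathscr{D}_n)=\mathbb{C}\lbrack\gamma(D^B_1),\ldots,\gamma(D^B_n)\rbrack$ is the whole invariant ring, which is the desired surjectivity. For the induction, note every element of the invariant ring has even degree; assuming the claim in degree $<2m$, take an invariant $p$ of degree $2m$ and let $p_{[2m]}$ be its top homogeneous component. Writing $p_{[2m]}$ as an isobaric polynomial $R(p_2,p_4,\ldots)$ of weight $2m$ in the even power sums and using $\gamma(D^B_d)=p_{2d}+(\text{lower degree})$, the degree-$2m$ component of $R(\gamma(D^B_1),\gamma(D^B_2),\ldots)$ is exactly $p_{[2m]}$; therefore $p-R(\gamma(D^B_1),\gamma(D^B_2),\ldots)$ is an invariant of degree $<2m$, to which the induction hypothesis applies. (Only $\gamma(D^B_1),\ldots,\gamma(D^B_n)$ occur, because $p_{2d}$ with $d>n$ is itself a polynomial in $p_2,\ldots,p_{2n}$.)

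The argument is essentially formal; the one place to be careful is the bookkeeping in the last step, namely that passing to top-degree components genuinely lowers the degree at each stage — and this works precisely because $\gamma(D^B_d)$ has degree \emph{exactly} $2d$ with top component $p_{2d}$, which is what Theorem \ref{HPDOThm} supplies. If one instead prefers the route through Heckman \cite{Hec91} and Proposition \ref{limitProp}, the main obstacle becomes showing that the polynomial expression $D_p=S(D^{BC}_1,\ldots,D^{BC}_n)$ given, for the parameters \eqref{kValues}, by the first part of Theorem 3.11 in \cite{Hec91} admits a limit as $\epsilon\to\infty$; this does hold, since $\rho^{BC}=\rho^B$ and $D^{BC}_d\to D^B_d$ after the shift $z\mapsto z+\epsilon$ (as established before Theorem \ref{HPDOThm}), so that $\gamma^{BC}(D^{BC}_d)\to\gamma(D^B_d)$ coefficientwise, but it is less transparent than the direct argument above.
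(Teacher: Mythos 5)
Your argument is correct and is essentially the paper's own proof: the paper likewise invokes the fact that the even power sums $p_r(x_1^2,\ldots,x_n^2)$, $r=1,\ldots,n$, generate $\mathbb{C}\lbrack x_1,\ldots,x_n\rbrack^{\mathbb{Z}^n_2\rtimes S_n}$ and then argues by induction on degree using Theorem \ref{HPDOThm} and Lemma \ref{algLemma}; you have merely written out that induction (and the leading-term bookkeeping for $\gamma(D^B_d)$) in full detail.
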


\begin{proof}
We recall that the power sums $p_r(x_1^2,\ldots,x_n^2)$ with $r = 1,\ldots,n$ generate $\mathbb{C}\lbrack x_1,\ldots,x_n\rbrack^{\mathbb{Z}^n_2\rtimes S_n}$. By induction in the degree of the polynomials $p$ the statement thus follows from Theorem \ref{HPDOThm} and Lemma \ref{algLemma}.
\end{proof}

\begin{remark}
It is interesting to note that the differential operators $D_p$ can be constructed in terms of the so-called Cherednik operators. To this end we let $D_i$ denote the Cherednik operator associated with the root system $BC_n$ and the unit vector $e_i$; see e.g.~Cherednik \cite{Che91} or Opdam \cite{Op95} for its definition. We furthermore let $D^{BC}_p$ denote the restriction of $p(D_1,\ldots,D_n)$ to $\mathbb{C}\lbrack x_1,\ldots,x_n\rbrack^{\mathbb{Z}^n_2\rtimes S_n}$. Theorem 2.12 in Opdam \cite{Op95} then implies that $D_p$ can be obtained by applying the limit transition in Proposition \ref{limitProp} to $D^{BC}_p$. The differential operators $D^B_d$ arise in the same way, but with Heckman's 'global' Dunkl operators (see Definition 2.2 in \cite{Hec91}) substituted for the Cherednik operators. The reason why the operators $D_p$ lack the simple recursive structure of the operators $D^{BC}_d$ can now be understood as follows: for a Cherednik operator $D_i$ there exist $w\in\mathbb{Z}^n_2\rtimes S_n$ such that $wD_i\neq D_{w(i)} w$. In the case of the 'global' Dunkl operators, on the other hand, equality always holds, and it is precisely this property which is responsible for the simple recursive structure of the differential operators $D^{BC}_{\pm i}$; see e.g.~Section 2.3 in Matsuo \cite{Mat92}.
\end{remark}

As observed above, we thus have the following:

\begin{corollary}
$\gamma$ is an isomorphism of algebras.
\end{corollary}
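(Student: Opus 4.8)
The plan is to assemble the statement from the two pieces of structural information already developed. Recall that $\gamma\colon\mathscr{D}_n\to\mathbb{C}[x_1,\ldots,x_n]^{\mathbb{Z}_2^n\rtimes S_n}$ is well-defined as an algebra homomorphism by Lemma \ref{algLemma} (the target is indeed the ring of even, permutation-symmetric polynomials), so only injectivity and surjectivity remain. For injectivity, I would argue that if $D\in\mathscr{D}_n$ satisfies $\gamma(D)=0$, then by \eqref{gammaDef} the operator $D$ annihilates every multivariable Bessel polynomial $Y_\lambda$; since these span $\Lambda_n$ (by their triangular expansion in the Jack polynomials, as used in the proof of Corollary \ref{commutativityCor}), $D$ acts as the zero operator on $\Lambda_n$. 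Writing $D=Q(D^B_1,\ldots,D^B_n)$ for a polynomial $Q$, the leading-symbol computation in the proof of Theorem \ref{HPDOThm} — where each $D^B_d$ has leading term $\sum_i(x_i\partial_{x_i})^{2d}$ — shows $D$ has leading symbol $Q(p_1(\xi^2),\ldots,p_n(\xi^2))$ in the commuting variables $\xi_i=x_i\partial_{x_i}$; if this operator is identically zero on $\Lambda_n$ then, since the $p_d(\xi_1^2,\ldots,\xi_n^2)$ are algebraically independent, $Q=0$, hence $D=0$. Alternatively, and more cleanly, injectivity is immediate from the algebraic independence of the $D^B_d$, $d=1,\ldots,n$ (Theorem \ref{HPDOThm}): $\gamma$ sends $D^B_d$ to a polynomial whose top-degree part is $p_d(x_1^2,\ldots,x_n^2)$ (read off the eigenvalue $e^B_d(\lambda)=\lambda_1^{2d}+\cdots+\lambda_n^{2d}+\text{l.d.}$), and these images are therefore algebraically independent, so $\gamma$ restricted to the polynomial algebra they generate is injective.

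For surjectivity I would simply invoke Proposition \ref{HPDOProp}: given any $p\in\mathbb{C}[x_1,\ldots,x_n]^{\mathbb{Z}_2^n\rtimes S_n}$ it produces $D_p\in\mathscr{D}_n$ with $D_pY_\lambda=p(\lambda+\rho^B)Y_\lambda$ for all partitions $\lambda$, which by the defining property \eqref{gammaDef} of $\gamma$ says exactly $\gamma(D_p)=p$. Combining the two directions yields that $\gamma$ is a bijective algebra homomorphism, i.e.\ an isomorphism. So the body of the proof is really just two sentences: injectivity from algebraic independence of the $D^B_d$ together with Theorem \ref{HPDOThm}, surjectivity from Proposition \ref{HPDOProp}.

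The main (and only) subtlety I anticipate is making the injectivity argument airtight: one must be sure that "$\gamma(D)=0$" forces "$D=0$ as an element of $\mathscr{D}_n$" and not merely "$D=0$ as an operator on $\Lambda_n$". This is harmless here because $\mathscr{D}_n$ was defined as the polynomial algebra $\mathbb{C}[D^B_1,\ldots,D^B_n]$ generated by algebraically independent operators, so it is abstractly a polynomial ring in $n$ variables and the evaluation $Q\mapsto Q(\gamma(D^B_1),\ldots,\gamma(D^B_n))$ is an injection of polynomial rings precisely because $\gamma(D^B_1),\ldots,\gamma(D^B_n)$ are themselves algebraically independent (their leading terms $p_1(x^2),\ldots,p_n(x^2)$ are). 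Since most of this has already been carried out in the preceding results, the proof can be kept to one short paragraph, as follows.

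\begin{proof}
By Lemma \ref{algLemma} the map $\gamma$ is a well-defined homomorphism into $\mathbb{C}\lbrack x_1,\ldots,x_n\rbrack^{\mathbb{Z}^n_2\rtimes S_n}$. It is injective: by Theorem \ref{HPDOThm} the operators $D^B_d$ with $d = 1,\ldots,n$ are algebraically independent, and from the form $e^B_d(\lambda) = \lambda_1^{2d} +\cdots + \lambda_n^{2d} + \text{l.d.}$ of their eigenvalues we see that $\gamma(D^B_d)$ is a polynomial whose top-degree part is the power sum $p_d(x_1^2,\ldots,x_n^2)$; since these power sums are algebraically independent, so are the $\gamma(D^B_d)$, and hence the restriction of $\gamma$ to $\mathscr{D}_n = \mathbb{C}\lbrack D^B_1,\ldots,D^B_n\rbrack$ has trivial kernel. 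It is surjective: given $p\in\mathbb{C}\lbrack x_1,\ldots,x_n\rbrack^{\mathbb{Z}^n_2\rtimes S_n}$, Proposition \ref{HPDOProp} provides $D_p\in\mathscr{D}_n$ with $D_pY_\lambda = p(\lambda+\rho^B)Y_\lambda$ for all partitions $\lambda$, which by \eqref{gammaDef} means precisely that $\gamma(D_p) = p$. Therefore $\gamma$ is an isomorphism of algebras.
\end{proof}
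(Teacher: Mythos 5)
Your proof is correct and follows essentially the same route as the paper, which establishes the corollary in the text preceding it: injectivity from the algebraic independence of the $D^B_d$ (Theorem \ref{HPDOThm}) and surjectivity from Proposition \ref{HPDOProp}. Your added care in checking that the images $\gamma(D^B_d)$ are themselves algebraically independent, via the leading terms $p_d(x_1^2,\ldots,x_n^2)$ of the eigenvalues, is a reasonable sharpening of the paper's one-line injectivity claim but not a different argument.
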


In our discussion of orthogonality in Section 7 we shall need the following:

\begin{corollary}\label{separationCor}
Fix a positive integer $m$, and let $\lambda = (\lambda_1,\ldots,\lambda_n)$ and $\mu = (\mu_1,\ldots,\mu_n)$ be two partitions such that $|\lambda|,|\mu|\leq m$ and $\lambda\neq\mu$. Assume that $\kappa\geq 0$ and that $a$ satisfies the condition
\begin{equation*}
	a < -2(m + \kappa(n-1)) + 1.
\end{equation*}
Then there exist $D\in\mathscr{D}_n$ such that $\gamma(D)(\lambda+\rho^B)\neq \gamma(D)(\mu+\rho^B)$.
\end{corollary}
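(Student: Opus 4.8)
The plan is to reduce the statement, through the surjectivity of the Harish--Chandra homomorphism $\gamma$, to an elementary separation property of the shifted spectral vectors $\lambda+\rho^B$. Note first that the hypotheses on $a$ and $\kappa$ imply \eqref{aInequality} for every partition with weight at most $m$, so by Proposition \ref{nonDegeneracyProp} all the $Y_\lambda$ occurring below are well defined. By Proposition \ref{HPDOProp} the map $\gamma:\mathscr{D}_n\to\mathbb{C}\lbrack x_1,\ldots,x_n\rbrack^{\mathbb{Z}^n_2\rtimes S_n}$ is onto, so it suffices to produce an even and permutation symmetric polynomial $p$ with $p(\lambda+\rho^B)\neq p(\mu+\rho^B)$; the operator $D=D_p$ then satisfies $\gamma(D)=p$ and does the job. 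Now $\mathbb{C}\lbrack x_1,\ldots,x_n\rbrack^{\mathbb{Z}^n_2\rtimes S_n}$ is generated by the power sums $p_r(x_1^2,\ldots,x_n^2)$, $r=1,\ldots,n$, and these determine and are determined by the multiset $\{x_1^2,\ldots,x_n^2\}$. Hence such a $p$ exists if and only if $\lambda+\rho^B$ and $\mu+\rho^B$ have different multisets $\{|\lambda_i+\rho^B_i|\}_{i=1}^n$ and $\{|\mu_i+\rho^B_i|\}_{i=1}^n$, i.e.\ lie in distinct orbits of $\mathbb{Z}^n_2\rtimes S_n$.

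Next I would use the inequality on $a$ to dispose of the absolute values. For any partition $\lambda$ with $|\lambda|\le m$ one has $\lambda_i\le m$ and, since $\kappa\ge 0$, $\kappa(n-i)\le\kappa(n-1)$; recalling that $\rho^B_i=\kappa(n-i)+(a-1)/2$, this gives
\[
	\lambda_i+\rho^B_i \;\le\; m+\kappa(n-1)+\tfrac{a-1}{2} \;<\; 0
\]
by the assumed bound $a<-2(m+\kappa(n-1))+1$. Thus every coordinate of $\lambda+\rho^B$, and likewise of $\mu+\rho^B$, is strictly negative, so $|\lambda_i+\rho^B_i|=-\lambda_i-\kappa(n-i)-(a-1)/2$. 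Applying the affine bijection $t\mapsto -t-(a-1)/2$, we see that the orbit datum of $\lambda+\rho^B$ is equivalent to the multiset $\{\lambda_i+\kappa(n-i):i=1,\ldots,n\}$.

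Finally I would check that this multiset determines $\lambda$. If $\kappa>0$ then $(\lambda_i+\kappa(n-i))-(\lambda_{i+1}+\kappa(n-i-1))=\lambda_i-\lambda_{i+1}+\kappa>0$, so the sequence $(\lambda_i+\kappa(n-i))_{i=1}^n$ is strictly decreasing, and the multiset recovers it in decreasing order, hence recovers $\lambda$; if $\kappa=0$ the multiset is simply $\{\lambda_1,\ldots,\lambda_n\}$, which determines the partition by sorting. In either case $\lambda\neq\mu$ forces these multisets, and therefore the orbits of $\lambda+\rho^B$ and $\mu+\rho^B$, to be distinct, which produces the required $D$. The only load-bearing step is the sign hypothesis: without it the $\mathbb{Z}^n_2$ factor of $\mathbb{Z}^n_2\rtimes S_n$ could pair a positive coordinate of $\lambda+\rho^B$ against a negative coordinate of $\mu+\rho^B$, and the inequality $a<-2(m+\kappa(n-1))+1$ is exactly what rules this out; the rest is the elementary observation that $\rho^B$ is a weak (and, for $\kappa>0$, strict) staircase.
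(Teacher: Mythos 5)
Your proof is correct and follows essentially the same route as the paper: reduce via Proposition \ref{HPDOProp} to finding an even, permutation symmetric polynomial separating $\lambda+\rho^B$ and $\mu+\rho^B$, and use the bound on $a$ to force all coordinates of these shifted vectors to be negative so that the $\mathbb{Z}^n_2$-action cannot identify them. If anything, your explicit multiset/power-sum argument is slightly more careful than the paper's, which asserts the strict ordering $(\lambda+\rho^B)_n<\cdots<(\lambda+\rho^B)_1<0$ (this can degenerate to weak inequalities when $\kappa=0$ and $\lambda$ has equal parts), whereas your sorting argument handles $\kappa=0$ cleanly.
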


\begin{proof}
By Proposition \ref{HPDOProp} it is sufficient to prove that there exist a polynomial $p\in\mathbb{C}\lbrack x_1,\ldots,x_n\rbrack^{\mathbb{Z}_2^n\rtimes S_n}$ such that $p(\lambda+\rho^B)\neq p(\mu+\rho^B)$. We observe that
\begin{equation*}
	\left(\lambda + \rho^B\right)_n <\cdots < \left(\lambda + \rho^B\right)_1 < 0,
\end{equation*}
and similarly for $\mu$. Since the polynomials in $\mathbb{C}\lbrack x_1,\ldots,x_n\rbrack^{\mathbb{Z}_2^n\rtimes S_n}$ separate points in the open subset of $\mathbb{R}^n$ defined by the inequality $x_n<\cdots<x_1<0$, the statement thus follows.
\end{proof}

\begin{remark}
We note that there are values of $\kappa$ and $a$, and partitions $\lambda$ and $\mu$, for which there exist no $D\in\mathscr{D}_n$ that separates $\lambda$ and $\mu$. For example, with $\kappa = -2$ and $a = 1$ this is the case for $\lambda = 3\delta$ and $\mu = \delta$, where $\delta = (n-1,n-2,\ldots,0)$. However, it is clear that such examples are rather rare.
\end{remark}

\section{Recurrence relations}
It is well known that the one-variable Bessel polynomials $y_n$ satisfy the three-term recurrence relation
\begin{multline*}
	\left((2n + a)(2n + a - 2)\frac{x}{2} + a - 2\right)(2n + a - 1) y_n(x)\\ = (n + a - 1)(2n + a - 2)y_{n+1}(x) - n(2n + a)y_{n-1}(x)
\end{multline*}
when normalised such that
\begin{equation*}
	y_n(x) = \frac{(n+a-1)_n}{2^n}Y_{(n)}(x);
\end{equation*}
see e.g.~Krall and Frink \cite{KF49}. In this section we will obtain a natural multivariable analogue of this recurrence relation: the explicit expansion of all products between an elementary symmetric polynomial and a multivariable Bessel polynomial in terms of the multivariable Bessel polynomials themselves. We mention that this type of recurrence relations are often referred to as Pieri type formulae. Since the elementary symmetric functions generate $\Lambda_n$ (see e.g.~Section I.2 in Macdonald \cite{Mac95}), this enables us to obtain the expansion in multivariable Bessel polynomials of the product between any symmetric polynomial and a multivariable Bessel polynomial. In deducing these recurrence relations we will start with the simplest case, corresponding to the elementary symmetric polynomial of degree one. We will sketch a proof of this recurrence relation, based on the orthogonality of the multivariable Bessel polynomials, as deduced in Section 7, and their explicit series expansion in Jack polynomials, as obtained in Proposition \ref{seriesProp}. We will then proceed to treat the general case using recurrence relations obtained by van Diejen \cite{vD99} for the $BC_n$ Jacobi polynomials, and the limit transition to the multivariable Bessel polynomials established in Proposition \ref{limitProp}. In order to simplify a comparison with his results on the $BC_n$ Jacobi case, we will to a large extent make use of the same notation as van Diejen.

As will become apparent below, it will be convenient to employ a normalisation of the multivariable Bessel polynomials different from the monic $Y_\lambda$. This particular normalisation has a simple expression in terms of the functions
\begin{equation*}
	\hat{\Delta}^B_{\pm}(z) = \prod_{i<j}\hat{d}^B_{v,\pm}(z_i - z_j)\hat{d}^B_{v,\pm}(z_i + z_j)\prod_{i=1}^n \hat{d}^B_{w,\pm}(z_i),
\end{equation*}
where
\begin{align*}
	\hat{d}^B_{v,\pm}(z) &= \frac{\Gamma(\pm\kappa+z+s_\pm)}{\Gamma(z+s_\pm)},\\
	\hat{d}^B_{w,\pm}(z) &= \frac{\Gamma(\pm(a-1)/2+z+s_\pm)}{\Gamma(2z+s_\pm)},
\end{align*}
with $s_+ = 0$ and $s_- = 1$. Indeed, we can now define 'renormalised' multivariable Bessel polynomials by
\begin{equation}\label{tildeNormalisation}
	\tilde{Y}_\lambda = 2^{-|\lambda|}\frac{\hat{\Delta}^B_+(\rho^B)}{\hat{\Delta}^B_+(\rho^B+\lambda)} Y_\lambda,
\end{equation}
where the vector $\rho^B$ is given by \eqref{rhoBVector}. In the one-variable case it is straightforward to verify that this particular normalisation of the Bessel polynomials coincide with the normalisation employed by Krall and Frink \cite{KF49}, i.e., $\tilde{Y}_{(n)}(x) = y_n(x)$. It is also interesting to note that the normalisation in question has a simple characterisation in terms of a specialisation formula.

\begin{proposition}\label{specialisationProp}
The multivariable Bessel polynomials $\tilde{Y}_\lambda$ are such that
\begin{equation}\label{specialisationFormula}
	\tilde{Y}_\lambda(0^n) = 1,
\end{equation}
i.e., the constant term of $\tilde{Y}_\lambda$ equals one.
\end{proposition}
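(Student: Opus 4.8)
The plan is to evaluate the constant term of $\tilde{Y}_\lambda$ by relating it to a specialisation value of the monic polynomial $Y_\lambda$, and then to compute $Y_\lambda(0^n)$ from the explicit Jack series expansion in Proposition \ref{seriesProp}. Since the Jack polynomial $P_\mu$ is homogeneous of degree $|\mu|$, we have $P_\mu(0^n) = 0$ unless $\mu = \emptyset$, so the constant term of $Y_\lambda$ is precisely the coefficient $u_{\lambda\emptyset}$ from \eqref{coeffs} (with $P_\emptyset = 1$). Thus $\tilde{Y}_\lambda(0^n) = 2^{-|\lambda|}\frac{\hat{\Delta}^B_+(\rho^B)}{\hat{\Delta}^B_+(\rho^B+\lambda)}\, u_{\lambda\emptyset}$, and the content of the proposition is the identity
\begin{equation*}
	u_{\lambda\emptyset} = 2^{|\lambda|}\frac{\hat{\Delta}^B_+(\rho^B+\lambda)}{\hat{\Delta}^B_+(\rho^B)}.
\end{equation*}
From \eqref{coeffs}, $u_{\lambda\emptyset} = 2^{|\lambda|}\sum_T \prod_{i=1}^{|\lambda|} \frac{P_{{}^{(i-1)}\lambda}(1^n)}{P_{{}^{(i)}\lambda}(1^n)}\binom{{}^{(i-1)}\lambda}{{}^{(i)}\lambda}\big/\big(ai + d_\lambda - d_{{}^{(i)}\lambda}\big)$, the sum being over standard tableaux of shape $\lambda$ (i.e.\ chains from $\lambda$ down to $\emptyset$ removing one box at a time). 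So it suffices to show this sum over standard tableaux telescopes to the single ratio $\hat{\Delta}^B_+(\rho^B+\lambda)/\hat{\Delta}^B_+(\rho^B)$, which is manifestly independent of the path.

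The natural strategy is induction on $|\lambda|$. Grouping the standard tableaux of shape $\lambda$ by their first step $\lambda \to \lambda_{(j)}$ (remove a box from row $j$, for each $j$ with $\lambda_{(j)}$ a partition), the inner sum over the remaining chain is exactly $u_{\lambda_{(j)}\emptyset}$ up to the shift in the eigenvalue denominators: one has $d_\lambda - d_{{}^{(i)}\lambda}$ along the sub-chain rather than $d_{\lambda_{(j)}} - d_{{}^{(i)}\lambda}$. Writing $d_\lambda - d_{{}^{(i)}\lambda} = (d_\lambda - d_{\lambda_{(j)}}) + (d_{\lambda_{(j)}} - d_{{}^{(i)}\lambda})$ introduces a nontrivial additive shift, so the recursion is not immediately clean; the cleaner route is to instead prove directly, using \eqref{binomExp} for $\binom{{}^{(i-1)}\lambda}{{}^{(i)}\lambda}$ and \eqref{JackExp} for the ratio $P_{{}^{(i-1)}\lambda}(1^n)/P_{{}^{(i)}\lambda}(1^n)$, that each summand of \eqref{coeffs} (for the full chain from $\lambda$ to $\emptyset$) factors as a product of $|\lambda|$ single-box factors, each of which, after the eigenvalue denominator $ai + d_\lambda - d_{{}^{(i)}\lambda}$ is taken into account, equals a ratio of the form $f({}^{(i-1)}\lambda)/f({}^{(i)}\lambda)$ for a fixed function $f$ of a partition. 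If that holds, the product over the chain telescopes to $f(\lambda)/f(\emptyset)$ regardless of the chain, all $N!/(\#\text{SYT})$-many terms become equal — more precisely each term equals $f(\lambda)/f(\emptyset)$ — wait, one must be careful: the sum is over standard tableaux, each contributing the \emph{same} telescoped value $f(\lambda)/f(\emptyset)$ only if every factor telescopes; but then $u_{\lambda\emptyset}/2^{|\lambda|}$ would be $(\#\mathrm{SYT}(\lambda))\cdot f(\lambda)/f(\emptyset)$, which is wrong. So in fact the correct mechanism must be more subtle: the binomial coefficient $\binom{{}^{(i-1)}\lambda}{{}^{(i)}\lambda}$ should carry combinatorial weight so that $\sum_T \prod_i (\text{box-removal weight})$ itself equals a single product over boxes of $\lambda$. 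This is precisely the content of a branching/Pieri identity for the $E_0$-type lowering operator; I expect it follows from iterating \eqref{E0Action}, or equivalently from the known dual-Pieri identity $\sum_i \binom{\mu^{(i)}}{\mu}\frac{P_{\mu^{(i)}}(1^n)}{P_\mu(1^n)} = \frac{p_1(1^n)}{?}$-type relation, combined with the $\hat\Delta$-form of the weights.

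Concretely, I would carry it out as follows. First, reduce to the identity $u_{\lambda\emptyset} = 2^{|\lambda|}\hat{\Delta}^B_+(\rho^B+\lambda)/\hat{\Delta}^B_+(\rho^B)$ as above. Second, recognise that $Y_\lambda(0^n) = u_{\lambda\emptyset}$ is, up to the sign and scaling in Definition \ref{besselDef}, the value at a distinguished point, and that such specialisation values satisfy a first-order recursion obtained by applying the lowering part of $D^B$: more usefully, use the Pieri-type recurrence for $p_1(x) = x_1+\cdots+x_n$ acting via \eqref{p1Action}, or the dual action via \eqref{E0Action}, to get a recursion $u_{\lambda\emptyset} = \frac{2}{e_\lambda - e_{\emptyset}}\sum_i \cdots$ — in fact $u_{\lambda\emptyset}$ is determined by \eqref{BesselRecursRel} with $\mu = \emptyset$ together with the already-known $u_{\lambda\mu}$ for $|\mu| = 1$, so an induction on $|\lambda|$ using \eqref{BesselRecursRel} directly, with the inductive hypothesis supplying $u_{\lambda\mu}$ for all $\mu\supsetneq\emptyset$, reduces the claim to a single algebraic identity expressing $\hat{\Delta}^B_+(\rho^B+\lambda)$ via $\{\hat{\Delta}^B_+(\rho^B+\lambda_{(i)})\}_i$. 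Third, verify that identity by unpacking the definitions of $\hat{d}^B_{v,+}$ and $\hat{d}^B_{w,+}$ in terms of Gamma functions: $\hat{\Delta}^B_+(\rho^B+\lambda^{(i)})/\hat{\Delta}^B_+(\rho^B+\lambda)$ is a ratio of products of factors $(\kappa + \text{(integer)} )$ and $((a-1)/2 + \text{(integer)})$ coming from the single shift $\lambda\to\lambda^{(i)}$, which should match exactly the product of $\binom{\mu^{(i)}}{\mu}$ (from \eqref{binomExp}) and $P_{\mu^{(i)}}(1^n)/P_\mu(1^n)$ (from \eqref{JackExp}) divided by $e_\lambda - e_{\mu^{(i)}} = a(|\lambda|-|\mu|-1) + d_\lambda - d_{\mu^{(i)}}$ after the inductive rearrangement.

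The main obstacle is this last algebraic identity: one must check that the Gamma-function ratio defining the renormalisation \eqref{tildeNormalisation} precisely cancels the accumulated products of binomial coefficients, Jack-polynomial specialisation values, and eigenvalue denominators in \eqref{coeffs}. Each of these three ingredients \eqref{binomExp}, \eqref{JackExp}, and $e_\lambda - e_\mu$ is a ratio of linear factors in $\kappa$ and $a$ indexed by pairs of rows, so the verification is a bookkeeping exercise in matching numerator and denominator factors; the delicate point is that the $\hat{d}^B_{v,+}$-factors pair the rows $i<j$ through \emph{both} $z_i - z_j$ and $z_i + z_j$, so when $\lambda$ gains a box in row $i$ one picks up a factor from every other row $j$, split into $j<i$ and $j>i$ cases (matching the $\prod_{j<i}$ and $\prod_{j>i}$ products in \eqref{JackExp} and in \eqref{binomExp}), plus a single $\hat{d}^B_{w,+}$-factor matching the overall prefactor $(\lambda_i + 1 + \kappa(\ell-i))$ in \eqref{binomExp} together with the $\kappa(\lambda_i+\kappa(n-i+1))/(\lambda_i+\kappa(\ell-i+1))$ prefactor in \eqref{JackExp}. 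I expect that once the vector $\rho^B$ from \eqref{rhoBVector} is substituted so that $(\rho^B+\lambda)_i = \lambda_i + \kappa(n-i) + (a-1)/2$, all of these factors line up on the nose, and the one-variable check $\tilde{Y}_{(n)}(0) = 1$ (equivalently the classical value of $y_n(0)$ from Krall--Frink) serves as a reassuring consistency test of the normalisation.
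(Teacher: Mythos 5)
Your reduction is fine: by homogeneity of the Jack polynomials the constant term of $Y_\lambda$ is $u_{\lambda\emptyset}$, so the proposition is equivalent to the evaluation identity $u_{\lambda\emptyset} = 2^{|\lambda|}\hat{\Delta}^B_+(\rho^B+\lambda)/\hat{\Delta}^B_+(\rho^B)$. But the proof of that identity is where your argument has a genuine gap. You yourself discard the telescoping mechanism (correctly: if every chain contributed the same telescoped product, the tableau sum would acquire a spurious factor $\#\mathrm{SYT}(\lambda)$), and the replacement you offer does not cohere. The recurrence \eqref{BesselRecursRel} specialised to $\mu = \emptyset$ expresses $u_{\lambda\emptyset}$ in terms of the coefficients $u_{\lambda\mu}$ with $|\mu| = 1$ \emph{for the same} $\lambda$; an induction on $|\lambda|$ only supplies the constant terms $u_{\nu\emptyset}$ of smaller partitions $\nu\subset\lambda$, and no closed form for the intermediate coefficients $u_{\lambda\mu}$, $\emptyset\subsetneq\mu\subset\lambda$, is available (the paper explicitly notes this). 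So the step ``reduces the claim to a single algebraic identity expressing $\hat{\Delta}^B_+(\rho^B+\lambda)$ via $\{\hat{\Delta}^B_+(\rho^B+\lambda_{(i)})\}_i$'' is not justified, and the final ``bookkeeping exercise'' is only asserted, never carried out. What you are trying to verify is in fact a nontrivial summation identity (a degenerate case of Opdam's evaluation formula), not a factor-by-factor cancellation, so the proposal as written does not establish \eqref{specialisationFormula}.

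The paper's proof avoids this entirely: it applies the limit transition of Proposition \ref{limitProp} to Opdam's known specialisation formula $P^{BC}_\lambda(0^n) = 2^{2|\lambda|}\hat{\Delta}^{BC}_+(\rho^{BC}+\lambda)/\hat{\Delta}^{BC}_+(\rho^{BC})$, noting that $Y_\lambda(0^n) = \lim_{\epsilon\to\infty}e^{-\epsilon|\lambda|}P^{BC}_\lambda(0^n)$ (since $\lim_{\epsilon\to\infty}e^{-\epsilon}t(z+\epsilon) = -e^z/4$ kills all higher Jack terms at $z$ pushed to the constant term) and then computing the elementary limit of the Gamma-function ratios, which produces exactly the factor $2^{|\lambda|}\hat{\Delta}^B_+(\rho^B+\lambda)/\hat{\Delta}^B_+(\rho^B)$ matching the normalisation \eqref{tildeNormalisation}. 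If you want to keep a direct approach, you would need to either import Opdam's result in this degenerate form or prove the evaluation identity for $u_{\lambda\emptyset}$ by some independent argument; note also that you cannot instead invoke the Pieri relation \eqref{simplestRecurRel} at $x=(0^n)$, since the paper's proof of that relation already uses the specialisation formula.
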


In order to prove the statement we will make use of the limit transition from the $BC_n$ Jacobi polynomials, and a known specialisation formula for these polynomials. To recall this formula we let $\hat{\Delta}^{BC}_\pm$ denote the functions obtained from $\hat{\Delta}^B_\pm$ upon substituting
\begin{align*}
	\hat{d}^{BC}_{v,\pm}(z) &= \frac{\Gamma(\pm k_3+z+s_\pm)}{\Gamma(z+s_\pm)},\\ \hat{d}^{BC}_{w,\pm}(z) &= \frac{\Gamma(\pm(k_1+2k_2)/2+z+s_\pm)\Gamma(\pm(k_1+1)/2+z+s_\pm)}{\Gamma(2z+s_\pm)}
\end{align*}
for $\hat{d}^B_{v,\pm}$ and $\hat{d}^B_{w,\pm}$, respectively, where, as before, $s_+ = 0$ and $s_- = 1$. It has been shown by Opdam \cite{Op89} (see Corollary 5.2 for $R=BC_n$) that
\begin{equation*}
	P^{BC}_\lambda(0^n) = 2^{2|\lambda|}\frac{\hat{\Delta}^{BC}_+(\rho^{BC}+\lambda)}{\hat{\Delta}^{BC}_+(\rho^{BC})}
\end{equation*}
with the vector $\rho^{BC} = (\rho^{BC}_1,\ldots,\rho^{BC}_n)$ given by
\begin{equation*}
	\rho^{BC}_i = k_3(n - i) + (k_1 + 2k_2)/2,\quad i = 1,\ldots,n.
\end{equation*}
We remark that Opdam formulated his result in terms of a function $\tilde{c}$ which (in the case of the root system $BC_n$) is defined somewhat differently from $\hat{\Delta}^{BC}_+$. However, using the duplication formula $\Gamma(z)\Gamma(z+1/2) = \Gamma(2z)\sqrt{\pi}/2^{2z-1}$ for the Gamma function it is a straightforward exercise to verify that the two formulations are equivalent.

\begin{proof}[Proof of Proposition \ref{specialisationProp}]
We let the values of the parameters $(k_1,k_2,k_3)$ be specified by \eqref{kValues}. By definition,
\begin{equation*}
	P^{BC}_\lambda(z) = \sum_{\mu\subseteq\lambda}u_{\lambda\mu}P_\mu(t(z))
\end{equation*}
for some coefficients $u_{\lambda\mu}$. Since $\lim_{\epsilon\rightarrow\infty}e^{-\epsilon}t(z+\epsilon) = -e^z/4$, we have
\begin{equation*}
	\lim_{\epsilon\rightarrow\infty} e^{-\epsilon|\lambda|} P^{BC}_\lambda(z+\epsilon) = \sum_{\mu\subseteq\lambda}(-4)^{-|\mu|}\left(\lim_{\epsilon\rightarrow\infty} e^{-\epsilon(|\lambda|-|\mu|)}u_{\lambda\mu}\right)P_\mu(e^z).
\end{equation*}
It follows from Proposition \ref{limitProp} that
\begin{equation*}
	Y_\lambda(0^n) = \lim_{\epsilon\rightarrow\infty} e^{-\epsilon|\lambda|}u_{\lambda,(0)} = \lim_{\epsilon\rightarrow\infty} e^{-\epsilon|\lambda|}P^{BC}_\lambda(0^n).
\end{equation*}
Using the difference equation $\Gamma(z+1) = z\Gamma(z)$ it is straightforward to verify that
\begin{equation*}
	\lim_{\epsilon\rightarrow\infty} e^{-\epsilon} \frac{\hat{d}^{BC}_{w,+}(z+1)}{\hat{d}^{BC}_{w,+}(z)} = \frac{1}{2}\frac{\hat{d}^B_{w,+}(z+1)}{\hat{d}^B_{w,+}(z)}.
\end{equation*}
In addition, $\rho^{BC} = \rho^B$. It follows that
\begin{equation*}
	\lim_{\epsilon\rightarrow\infty} e^{-\epsilon|\lambda|}2^{2|\lambda|}\frac{\hat{\Delta}^{BC}_+(\rho^{BC}+\lambda)}{\hat{\Delta}^{BC}_+(\rho^{BC})} = 2^{|\lambda|}\frac{\hat{\Delta}^B_+(\rho^B+\lambda)}{\hat{\Delta}^B_+(\rho^B)}
\end{equation*}
which clearly implies the statement.
\end{proof}

We proceed to deduce the recurrence relations in question for the multivariable Bessel polynomials. As we will see below, these recurrence relations have a simple expression in terms of the functions
\begin{equation*}
	\hat{v}^B(z) = \frac{\kappa+z}{z},\quad \hat{w}^B(z) = \frac{((a-1)/2+z)}{2z(2z+1)}.
\end{equation*}
It is readily inferred from the difference equation $\Gamma(z+1) = z\Gamma(z)$ for the Gamma function that these functions appear as the coefficients in the following difference equations for the functions $\hat{d}^B_{v,\pm}$ and $\hat{d}^B_{w,\pm}$:

\begin{lemma}\label{differenceEqsLemma}
The functions $\hat{d}^B_{v,\pm}$ and $\hat{d}^B_{w,\pm}$ satisfy the difference equations
\begin{align*}
	\hat{d}^B_{v,+}(z + 1) &= \hat{v}^B(z)\hat{d}^B_{v,+}(z), & \hat{d}^B_{v,-}(z + 1) &= \hat{v}^B(-z-1)\hat{d}^B_{v,-}(z),\\
	\hat{d}^B_{w,+}(z + 1) &= \hat{w}^B(z)\hat{d}^B_{w,+}(z), & \hat{d}^B_{w,-}(z + 1) &= -\hat{w}^B(-z-1)\hat{d}^B_{w,-}(z).
\end{align*}
\end{lemma}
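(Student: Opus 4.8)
The plan is to verify each of the four difference equations by direct substitution into the definitions of $\hat{d}^B_{v,\pm}$ and $\hat{d}^B_{w,\pm}$, using only the functional equation $\Gamma(z+1) = z\Gamma(z)$. Nothing deeper is needed; the lemma is pure bookkeeping in the Gamma recursion, and the only care required is in the parity of the shift constants $s_\pm$ and in the sign changes produced when $\hat{v}^B$ and $\hat{w}^B$ are evaluated at $-z-1$.

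First I would treat the two $v$-equations. In the $+$ case $s_+ = 0$, so $\hat{d}^B_{v,+}(z) = \Gamma(\kappa+z)/\Gamma(z)$, and replacing $z$ by $z+1$ and applying $\Gamma(z+1)=z\Gamma(z)$ to both numerator and denominator extracts the factor $(\kappa+z)/z = \hat{v}^B(z)$, which is the claim. In the $-$ case $s_- = 1$, so $\hat{d}^B_{v,-}(z) = \Gamma(z+1-\kappa)/\Gamma(z+1)$; the same one-step shift produces the factor $(z+1-\kappa)/(z+1)$, and one checks that this equals $\hat{v}^B(-z-1) = (\kappa-z-1)/(-z-1)$ after cancelling the common sign in numerator and denominator.

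Next I would handle the two $w$-equations in the same way. The new feature is that the denominator $\Gamma(2z+s_\pm)$ advances by \emph{two} when $z\mapsto z+1$, so two applications of the Gamma recursion are needed there, which is precisely the origin of the factor $2z(2z+1)$ in the denominator of $\hat{w}^B$. For $\hat{d}^B_{w,+}$ (with $s_+=0$) this gives the factor $((a-1)/2+z)/(2z(2z+1)) = \hat{w}^B(z)$. For $\hat{d}^B_{w,-}$ (with $s_-=1$) the computation yields the factor $(z+1-(a-1)/2)/\big((2z+2)(2z+1)\big)$; comparing this with $\hat{w}^B(-z-1)$ one finds that the substitution $z\mapsto -z-1$ flips the sign of both the numerator and the product in the denominator an odd number of times overall, so an extra global minus sign appears, accounting for the minus sign in the stated equation.

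I do not expect any genuine obstacle: the entire argument is a short, routine manipulation of Gamma functions. The only thing one must be disciplined about is the sign accounting in the two $-$ cases, where evaluating $\hat{v}^B$ and $\hat{w}^B$ at $-z-1$ introduces cancelling or non-cancelling sign flips in numerator and denominator; tracking these carefully is what delivers the correct signs (in particular the isolated $-\hat{w}^B(-z-1)$) in the statement of the lemma.
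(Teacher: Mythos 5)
Your verification is correct and is exactly the argument the paper intends: the lemma is stated there as "readily inferred" from $\Gamma(z+1)=z\Gamma(z)$, and your case-by-case bookkeeping (including the double shift in $\Gamma(2z+s_\pm)$ and the odd total number of sign flips that produces the isolated $-\hat{w}^B(-z-1)$) fills in precisely that routine computation. No gaps.
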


We are now ready to state and prove the simplest recurrence relation for the multivariable Bessel polynomials, corresponding to the elementary symmetric polynomial of degree one.

\begin{proposition}\label{recurRelProp}
The multivariable Bessel polynomials $\tilde{Y}_\lambda$ satisfy the recurrence relation
\begin{equation}\label{simplestRecurRel}
\begin{split}
	\frac{1}{2}(x_1 +\cdots + x_n)\tilde{Y}_\lambda(x) &= \sum_i \hat{V}_i\left(\rho^B + \lambda\right)\left(\tilde{Y}_{\lambda^{(i)}}(x) - \tilde{Y}_\lambda(x)\right)\\ &\quad + \sum_i \hat{V}_{-i}\left(\rho^B + \lambda\right)\left(\tilde{Y}_{\lambda_{(i)}}(x) - \tilde{Y}_\lambda(x)\right)
\end{split}
\end{equation}
with
\begin{equation}\label{simplestRecurRelCoeffs}
	\hat{V}_{\pm i}(z) = \hat{w}^B(\pm z_i)\prod_{j\neq i} \hat{v}^B(\pm z_i + z_j)\hat{v}^B(\pm z_i - z_j),
\end{equation}
where the first and second sum is over all $i = 1,\ldots,n$ such that $\lambda^{(i)}$ and $\lambda_{(i)}$ is a partition, respectively.
\end{proposition}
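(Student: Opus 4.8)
The plan is to write $\tfrac12\,p_1(x)\,\tilde Y_\lambda=\sum_\mu c_{\lambda\mu}\tilde Y_\mu$, where $p_1(x)=x_1+\cdots+x_n$, and to determine every coefficient $c_{\lambda\mu}$. By the triangularity of the $\tilde Y_\mu$ in the Jack polynomials and the fact that $\deg(p_1\tilde Y_\lambda)=|\lambda|+1$, the sum runs over partitions $\mu$ with $|\mu|\le|\lambda|+1$. Taking the homogeneous component of degree $|\lambda|+1$ on both sides and applying the Jack Pieri rule \eqref{p1Action} to the leading term $P_\lambda$ of $\tilde Y_\lambda$ shows that the only $\mu$ of weight $|\lambda|+1$ occurring are $\mu=\lambda^{(i)}$; comparing the coefficient of $P_{\lambda^{(i)}}$ and substituting the closed forms \eqref{binomExp} and \eqref{JackExp} together with the renormalisation \eqref{tildeNormalisation}, rewritten by means of the difference equations in Lemma \ref{differenceEqsLemma}, gives after simplification $c_{\lambda,\lambda^{(i)}}=\hat V_i(\rho^B+\lambda)$ with $\hat V_i$ as in \eqref{simplestRecurRelCoeffs}.

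Next I would invoke the orthogonality of the $\tilde Y_\mu$ proved in Section 7. Working with $\kappa\ge 0$ and $a<-2(|\lambda|+1+\kappa(n-1))+1$ — which by Proposition \ref{nonDegeneracyProp} also ensures that all $Y_\mu$ with $|\mu|\le|\lambda|+1$ are well-defined — the polynomials $\tilde Y_\mu$, $|\mu|\le|\lambda|+1$, are pairwise orthogonal with explicitly known squared norms, and multiplication by the real function $p_1$ is symmetric for this inner product. Hence $c_{\lambda\mu}\langle\tilde Y_\mu,\tilde Y_\mu\rangle=\tfrac12\langle p_1\tilde Y_\lambda,\tilde Y_\mu\rangle=\tfrac12\langle\tilde Y_\lambda,p_1\tilde Y_\mu\rangle$; since $\deg(p_1\tilde Y_\mu)=|\mu|+1$, this forces $c_{\lambda\mu}=0$ whenever $|\mu|<|\lambda|-1$, and for $|\mu|=|\lambda|-1$ the pairing extracts the coefficient of $\tilde Y_\lambda$ in the expansion of $p_1\tilde Y_\mu$, which by the argument of the previous paragraph (now with $\mu$ in place of $\lambda$) vanishes unless $\lambda=\mu^{(i)}$, i.e.\ $\mu=\lambda_{(i)}$. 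For such $\mu$ one gets $c_{\lambda,\lambda_{(i)}}\langle\tilde Y_{\lambda_{(i)}},\tilde Y_{\lambda_{(i)}}\rangle=c_{\lambda_{(i)},\lambda}\langle\tilde Y_\lambda,\tilde Y_\lambda\rangle=\hat V_i(\rho^B+\lambda-e_i)\langle\tilde Y_\lambda,\tilde Y_\lambda\rangle$, and inserting the squared-norm formula of Section 7 and collapsing the resulting quotient of Gamma factors, again via Lemma \ref{differenceEqsLemma}, yields $c_{\lambda,\lambda_{(i)}}=\hat V_{-i}(\rho^B+\lambda)$. Once the asserted identity is established for parameters in this range, it extends to all $(\kappa,a)$ for which both sides are defined by analyticity, cf.\ Corollary \ref{analyticCorollary}.

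It remains to treat the coefficients with $|\mu|=|\lambda|$. Evaluating the identity at $x=0^n$ and using $p_1(0^n)=0$ together with the specialisation $\tilde Y_\mu(0^n)=1$ of Proposition \ref{specialisationProp}, one obtains $\sum_{|\mu|=|\lambda|}c_{\lambda\mu}=-\sum_i(\hat V_i+\hat V_{-i})(\rho^B+\lambda)$, so the proposition follows once the off-diagonal coefficients $c_{\lambda\mu}$ with $|\mu|=|\lambda|$, $\mu\neq\lambda$, are shown to vanish. By the two preceding paragraphs the polynomial $\tfrac12 p_1\tilde Y_\lambda-\sum_i\hat V_i(\rho^B+\lambda)\tilde Y_{\lambda^{(i)}}-\sum_i\hat V_{-i}(\rho^B+\lambda)\tilde Y_{\lambda_{(i)}}$ has degree at most $|\lambda|$ and is orthogonal to every $\tilde Y_\nu$ with $|\nu|<|\lambda|$; hence it lies in the span of the $\tilde Y_\mu$ with $|\mu|=|\lambda|$, and it suffices to check that its homogeneous component of degree $|\lambda|$ is proportional to $P_\lambda$. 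Computing this component from the explicit series expansion of Proposition \ref{seriesProp} — only the subleading, weight-$(|\lambda|-1)$, coefficients of $\tilde Y_\lambda$ and of the $\tilde Y_{\lambda^{(i)}}$ enter — reduces the claim to a finite combinatorial identity among the $u_{\lambda\nu}$, the $u_{\lambda^{(i)}\mu}$, the binomial coefficients \eqref{binomExp}, and the $\hat V_i$. I expect this to be the main obstacle: the leading-term and self-adjointness arguments dispose of the $\lambda^{(i)}$- and $\lambda_{(i)}$-coefficients at once and $x=0^n$ fixes the diagonal, but there is no purely spectral reason excluding other weight-$|\lambda|$ partitions, since the $D^B$-eigenvalue $e_\lambda$ is degenerate on partitions of equal weight. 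As a more transparent alternative one can instead deduce the required cancellation from van Diejen's first-order Pieri formula for the $BC_n$ Jacobi polynomials by applying the limit transition of Proposition \ref{limitProp} — the route taken for the general recurrence relations in the remainder of this section.
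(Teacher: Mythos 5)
Your overall strategy coincides with the paper's (which the paper itself presents only as a sketch): compare top-degree Jack coefficients to obtain the $\lambda^{(i)}$-coefficients, use the Section~7 orthogonality together with symmetry of multiplication by $p_1$ to dispose of the low-degree part, and fix the diagonal coefficient by evaluating at $x=0^n$ via Proposition \ref{specialisationProp}. Two remarks on where you deviate. For the $\lambda_{(i)}$-coefficients you invoke the squared-norm formula of Section~7; this is not circular within the paper's logic (Lemma \ref{NormFactorQuotientLemma} rests on Theorem \ref{RecurRelsThm}, which is proved from van Diejen's $BC_n$ Pieri formulas and Proposition \ref{limitProp}, independently of the present proposition), but it places Proposition \ref{recurRelProp} downstream of Theorem \ref{RecurRelsThm}, whose $r=1$ case already is the proposition; the paper instead extracts $\hat{V}_{-i}$ directly from the explicit expansion of Proposition \ref{seriesProp}. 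On the other hand, your self-adjointness/degree argument showing that at weight $|\lambda|-1$ only the partitions $\lambda_{(i)}$ can occur is correct and in fact more explicit than what the paper writes.

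The genuine gap is the one you yourself flag: you never prove that the coefficients of $\tilde{Y}_\mu$ with $|\mu|=|\lambda|$, $\mu\neq\lambda$ (the partitions $\lambda+e_i-e_j$) vanish. You correctly observe that neither the spectral data (the $D^B$-eigenvalue is degenerate at fixed weight) nor orthogonality alone (the pairing only relates the coefficient $c_{\lambda\mu}$ to $c_{\mu\lambda}$) excludes them, and you defer the required cancellation either to an unverified combinatorial identity among the $u_{\lambda\nu}$ or to the van Diejen limit. As submitted, therefore, the proposal does not establish \eqref{simplestRecurRel}: without this vanishing, the evaluation at $x=0^n$ only determines the sum of the weight-$|\lambda|$ coefficients, not the individual ones. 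For comparison, the paper disposes of this point by asserting the structural form \eqref{BesselExp} directly from \eqref{p1Action} and the triangularity in Definition \ref{besselDef} -- an assertion that is itself quite terse, and you have put your finger on exactly the step where the paper's sketch is thinnest. Your closing suggestion, to obtain the needed cancellation as the $\epsilon\rightarrow\infty$ limit of van Diejen's first-order recurrence via Proposition \ref{limitProp} (equivalently, the $r=1$ case of Theorem \ref{RecurRelsThm}), is a sound way to close it; but one of these routes must actually be carried out for the argument to count as a proof.
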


\begin{proof}
We fix a partition $\lambda = (\lambda_1,\ldots,\lambda_n)$. In order to make use of the orthogonality of the multivariable Bessel polynomials, as established in Theorem \ref{orthogonalityThm}, we assume that $\kappa\geq 0$ and that the condition \eqref{aCond} is satisfied for $m = |\lambda| + 1$. It follows from \eqref{p1Action} and condition (1) in Definition \ref{besselDef} that
\begin{equation}\label{BesselExp}
	\frac{1}{2}(x_1 +\cdots +x_n)\tilde{Y}_\lambda(x) = \sum_{i=1}^n\hat{V}_i\tilde{Y}_{\lambda^{(i)}}(x) + \hat{V}_0\tilde{Y}_\lambda(x) + \sum_{i=1}^n \hat{V}_{-i}\tilde{Y}_{\lambda_{(i)}}(x) + P(x)
\end{equation}
for some coefficients $\hat{V}_i$ and a symmetric polynomial $P$ of degree less than $|\lambda|-1$. We let $\mu = (\mu_1,\ldots,\mu_n)$ be a partition such that $|\mu|\leq|\lambda|-2$. Then, $(x_1+\cdots+x_n)\tilde{Y}_\mu(x)$ is a linear combination of multivariable Bessel polynomials $\tilde{Y}_\nu(x)$ such that $|\nu|<|\lambda|$. It thus follows from Theorem \ref{orthogonalityThm} that
\begin{equation*}
	0 = \left\langle (x_1+\cdots+x_n)\tilde{Y}_\lambda,\tilde{Y}_\mu\right\rangle_{a,\kappa} = \left\langle P,\tilde{Y}_\mu\right\rangle_{a,\kappa}.
\end{equation*}
Since the multivariable Bessel polynomials $\tilde{Y}_\mu$ with $|\mu|\leq|\lambda|-2$ span $\Lambda_n^{\leq|\lambda|-2}$ (c.f.~the proof of Corollary \ref{commutativityCor}), this implies that $P = 0$. Moreover, by setting $x = (0^n)$ in \eqref{BesselExp} and using the specialisation formula \eqref{specialisationFormula} we find that
\begin{equation*}
	\hat{V}_0 = -\sum_{i=1}^n\left(\hat{V}_i + \hat{V}_{-i}\right).
\end{equation*}
Hence, the expansion of $(x_1 +\cdots + x_n)\tilde{Y}_\lambda(x)$ in 'renormalised' Bessel polynomials $\tilde{Y}_\mu$ is indeed of the form \eqref{simplestRecurRel}. There remains to compute the coefficients $\hat{V}_{\pm i}$ for $i = 1,\ldots,n$. It can be inferred from (6.24) in Chapter VI of Macdonald's book \cite{Mac95} that
\begin{equation*}
	(x_1 +\cdots + x_n)P_\lambda(x) = \sum_i  \psi^\prime_{\lambda^{(i)}/\lambda}P_{\lambda^{(i)}}(x)
\end{equation*}
with
\begin{equation*}
	\psi^\prime_{\lambda^{(i)}/\lambda} = \prod_{j<i}\frac{\kappa(j-i+1)+\lambda_i-\lambda_j}{\kappa(j-i)+\lambda_i-\lambda_j}\frac{\kappa(j-i-1)+\lambda_i-\lambda_j+1}{\kappa(j-i)+\lambda_i-\lambda_j+1}.
\end{equation*}
Applying this recurrence relation to the left hand side of \eqref{simplestRecurRel}, and then equating coefficients with the right hand side we obtain
\begin{equation*}
	\hat{V}_i = \frac{\hat{\Delta}^B_+(\rho^B+\lambda^{(i)})}{\hat{\Delta}^B_+(\rho^B+\lambda)}\psi^\prime_{\lambda^{(i)}/\lambda}
\end{equation*}
for $i = 1,\ldots,n$. Using Lemma \ref{differenceEqsLemma}, and the formula for $\psi^\prime_{\lambda^{(i)}/\lambda}$ stated above, it is now straightforward to verify that the coefficients $\hat{V}_i$ indeed are given by \eqref{simplestRecurRelCoeffs} for $i = 1,\ldots,n$. That \eqref{simplestRecurRelCoeffs} is valid also for $i = -1,\ldots,-n$ can be verified in a similar manner, using the series expansion of the multivariable Bessel polynomials obtained in Proposition \ref{seriesProp}. This proves the statement under the assumed conditions on the parameters $a$ and $\kappa$. However, by
analytic continuation in these parameters the proof immediately extends to any values of $a$ and $\kappa$ for which the multivariable Bessel polynomial in question is well-defined.
\end{proof}

In principle we could use the same method as in the proof of Proposition \ref{recurRelProp} to deduce higher order recurrence relations for the multivariable Bessel polynomials. However, it is clear that this would lead to lengthy and complicated computations. We will therefore use a different approach. To this end we recall that, by exploiting limit transitions from the multivariable Askey-Wilson polynomials, van Diejen \cite{vD99} obtained recurrence relations for a number of different families of orthogonal polynomials. Among them are the $BC_n$ Jacobi polynomials. As we will show below, by applying the limit transition obtained in Proposition \ref{limitProp} to these latter recurrence relations it is simple and straightforward to obtain the remaining recurrence relations for the multivariable Bessel polynomials. We remark that van Diejen used a definition of the $BC_n$ Jacobi polynomials -- in his notation denoted $p^J_\lambda$ -- which is somewhat different from the one employed in this paper. It is readily verified that the precise relation between these two definitions is given by
\begin{equation*}
	p^J_\lambda(ix_1,\ldots,ix_n;\nu_0,\nu_1,\nu) = P^{BC}_\lambda(x_1,\ldots,x_n;k_1,k_2,k_3)
\end{equation*}
with the parameters $(\nu_0,\nu_1,\nu) = (k_1+k_2,k_2,k_3)$, and where $i$ here denotes the imaginary unit. In order to avoid the use of more than one definition of the $BC_n$ Jacobi polynomials, we will formulate van Diejen's results in terms of the polynomials $P^{BC}_\lambda$. For that we require the 'renormalised' $BC_n$ Jacobi polynomials
\begin{equation*}
		\tilde{P}^{BC}_\lambda = 2^{-2|\lambda|}\frac{\hat{\Delta}^{BC}_+(\rho^{BC})}{\hat{\Delta}^{BC}_+(\rho^{BC}+\lambda)}P^{BC}_\lambda.
\end{equation*}
It is clear from the discussion above that, as in the case of the multivariable Bessel polynomials, these 'renormalised' $BC_n$ Jacobi polynomials can be characterised by the fact that their constant term equals one. Furthermore, the functions
\begin{equation*}
	\hat{v}^{BC}(z) = \frac{\kappa+z}{z},\quad \hat{w}^{BC}(z) = \frac{((k_1+2k_2)/2+z)((k_1+1)/2+z)}{2z(2z+1)},
\end{equation*}
and the symmetric trigonometric polynomials
\begin{equation*}
	\hat{E}^{BC}_r(z) = (-1)^{r+1}\sum_{\substack{I\subset\lbrace 1,\ldots,n\rbrace\\ |I|=r}}\prod_{i\in I}\sinh^2\frac{z_i}{2},
\end{equation*}
will be required. We note, in particular, that the trigonometric polynomials $\hat{E}_r$ are, up to a difference in sign for odd $r$, the elementary symmetric polynomials in the functions $t_i(z_i) = -\sinh^2\frac{z_i}{2}$ with $i = 1,\ldots,n$. Given an index set $I\subset\lbrace 1,\ldots,n\rbrace$ and a configuration of signs $\epsilon_i = \pm 1$, $i\in I$, we will make use of the short-hand notation
\begin{equation*}
	e_{\epsilon I} = \sum_{i\in I}\epsilon_i e_i.
\end{equation*}
For each $r = 1,\ldots, n$ van Diejen \cite{vD99} (see Theorem 6.4) obtained the following reccurence relation for the $BC_n$ Jacobi polynomials:
\begin{equation}\label{JacobiRecurRels}
	\hat{E}^{BC}_r(z) \tilde{P}^{BC}_\lambda(z) = \sum_{I,\epsilon}\hat{U}_{I^c,r-|I|}\left(\rho^{BC}+\lambda\right)\hat{V}_{\epsilon I,I^c}\left(\rho^{BC}+\lambda\right)\tilde{P}^{BC}_{\lambda+e_{\epsilon I}}(z),
\end{equation}
where the sum is over all index sets $I\subset\lbrace 1,\ldots,n\rbrace$ with $0\leq|I|\leq r$ and configuration of signs $\epsilon_i = \pm 1$, $i\in I$, such that $\lambda + e_{\epsilon I}$ is a partition, and where
\begin{align*}
	\hat{U}_{J,p}(z) &= (-1)^p\sum_{\substack{K\subset J\\ |K|=p}}\sum_{\substack{\epsilon_k=\pm 1\\ k\in K}} \Bigg(\prod_{k\in K}\hat{w}^{BC}(\epsilon_k z_k)\\ &\quad\times\prod_{\substack{k,k^\prime\in K\\ k<k^\prime}}\hat{v}^{BC}(\epsilon_k z_k + \epsilon_{k^\prime}z_{k^\prime})\hat{v}^{BC}(-\epsilon_k z_k - \epsilon_{k^\prime}z_{k^\prime} - 1)\\ &\quad \times \prod_{\substack{k\in K\\ j\in J\setminus K}}\hat{v}^{BC}(\epsilon_k z_k+z_j)\hat{v}^{BC}(\epsilon_k z_k-z_j)\Bigg),\\
	\hat{V}_{\epsilon I,J}(z) &= \prod_{i\in I}\hat{w}^{BC}(\epsilon_i z_i)\prod_{\substack{i,i^\prime\in I\\ i<i^\prime}}\hat{v}^{BC}(\epsilon_i z_i + \epsilon_{i^\prime}z_{i^\prime})\hat{v}^{BC}(\epsilon_i z_i + \epsilon_{i^\prime}z_{i^\prime} + 1)\\ &\quad \times \prod_{\substack{i\in I\\ j\in J}}\hat{v}^{BC}(\epsilon_i z_i + z_j)\hat{v}^{BC}(\epsilon_i z_i - z_j).
\end{align*}

The recurrence relations for the multivariable Bessel polynomials will have the same structure as \eqref{JacobiRecurRels}. We only have to make the appropriate substitutions. More precisely, with
\begin{equation*}
	\hat{E}^B_r(x) = \frac{(-1)^{r+1}}{2^r}\sum_{\substack{I\subset\lbrace 1,\ldots,n\rbrace\\ |I|=r}}\prod_{i\in I}x_i
\end{equation*}
we have the following:

\begin{theorem}\label{RecurRelsThm}
For each $r = 1,\ldots,n$ the 'renormalised' multivariable Bessel polynomials $\tilde{Y}_\lambda$ satisfy a recurrence relation of the same form as \eqref{JacobiRecurRels}, but with $\hat{E}^B_r$ substituted for $\hat{E}^{BC}_r$, $\rho^B$ for $\rho^{BC}$, and the functions $\hat{v}^{BC}$ and $\hat{w}^{BC}$ replaced by $\hat{v}^B$ and $\hat{w}^B$, respectively.
\end{theorem}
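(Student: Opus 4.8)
The plan is to derive these recurrence relations by applying the limit transition of Proposition~\ref{limitProp} to van Diejen's recurrence relations \eqref{JacobiRecurRels} for the $BC_n$ Jacobi polynomials, with the parameters $(k_1,k_2,k_3)$ fixed by \eqref{kValues}. The crucial preliminary point is that the renormalisation \eqref{tildeNormalisation} is tailored so that the limit transition becomes ``flat'' on the renormalised polynomials: combining Proposition~\ref{limitProp} with the Gamma function asymptotics established in the proof of Proposition~\ref{specialisationProp} --- namely $\rho^{BC}=\rho^B$ together with $\lim_{\epsilon\to\infty}e^{-\epsilon}\frac{\hat{d}^{BC}_{w,+}(z+1)}{\hat{d}^{BC}_{w,+}(z)}=\frac{1}{2}\frac{\hat{d}^B_{w,+}(z+1)}{\hat{d}^B_{w,+}(z)}$ --- one checks that the powers of $2$ and $e^\epsilon$ cancel and that, for every partition $\mu$,
\begin{equation*}
	\lim_{\epsilon\to\infty}\tilde{P}^{BC}_\mu(z_1+\epsilon,\ldots,z_n+\epsilon)=\tilde{Y}_\mu(e^{z_1},\ldots,e^{z_n}).
\end{equation*}
As in Proposition~\ref{limitProp}, this limit is taken coefficient-wise in the Jack polynomial expansion, so it is an identity of polynomials; in particular it applies to each $\tilde{P}^{BC}_{\lambda+e_{\epsilon I}}$ on the right-hand side of \eqref{JacobiRecurRels}, these renormalised polynomials carrying no degree-dependent prefactor.

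I would next record the $\epsilon$-asymptotics of the remaining ingredients of \eqref{JacobiRecurRels} after the substitution $z\mapsto z+\epsilon$ with the parameters \eqref{kValues}. Since $k_3=\kappa$ and $k_1+2k_2=a-1$, both the vector $\rho^{BC}=\rho^B$ and the function $\hat{v}^{BC}=\hat{v}^B$ are independent of $\epsilon$, while $(k_1+1)/2=(a+1)/4+e^\epsilon/2\to\infty$, so that $\hat{w}^{BC}(z)=\big((k_1+1)/2+z\big)\hat{w}^B(z)\sim(e^\epsilon/2)\hat{w}^B(z)$. Consequently each of van Diejen's coefficients acquires a power of $e^\epsilon/2$ equal to its number of $\hat{w}^{BC}$-factors: $\hat{U}_{J,p}(z)\sim(e^\epsilon/2)^p\hat{U}^B_{J,p}(z)$ and $\hat{V}_{\epsilon I,J}(z)\sim(e^\epsilon/2)^{|I|}\hat{V}^B_{\epsilon I,J}(z)$, where $\hat{U}^B$ and $\hat{V}^B$ denote the same expressions with $\hat{v}^B,\hat{w}^B$ in place of $\hat{v}^{BC},\hat{w}^{BC}$. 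Finally, $\sinh^2\frac{1}{2}(z_i+\epsilon)\sim\frac{1}{4}e^{\epsilon}e^{z_i}$ yields $\hat{E}^{BC}_r(z+\epsilon)\sim(e^\epsilon/2)^r\hat{E}^B_r(e^z)$.

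The proof then concludes by substituting $(z_1+\epsilon,\ldots,z_n+\epsilon)$ into \eqref{JacobiRecurRels}, dividing through by $(e^\epsilon/2)^r$, and letting $\epsilon\to\infty$ term by term, the sum being finite. The decisive bookkeeping is that in each term on the right the exponents balance, $(r-|I|)+|I|=r$, so the factor $(e^\epsilon/2)^r$ produced by $\hat{U}_{I^c,r-|I|}(\rho^B+\lambda)\hat{V}_{\epsilon I,I^c}(\rho^B+\lambda)$ matches the one produced by $\hat{E}^{BC}_r(z+\epsilon)$ on the left; after cancellation and the substitution $x_i=e^{z_i}$ the limit is precisely the recurrence relation of the stated form. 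I do not anticipate a genuine obstacle: given Proposition~\ref{limitProp} and van Diejen's Theorem~6.4 of \cite{vD99}, the only real work is the power counting in $e^\epsilon/2$ just sketched, together with the observation that, once \eqref{kValues} is imposed, the sole $\epsilon$-dependence surviving in \eqref{JacobiRecurRels} resides in $\hat{w}^{BC}$ (hence in $\hat{U}$ and $\hat{V}$) and in $\hat{E}^{BC}_r$, the combinatorial structure of the sum, the function $\hat{v}^{BC}$, and the vector $\rho^{BC}$ all being inert.
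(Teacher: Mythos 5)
Your proposal is correct and follows essentially the same route as the paper: fix the parameters by \eqref{kValues}, note that $\hat{v}^{BC}=\hat{v}^B$ and $\rho^{BC}=\rho^B$ are inert, establish $\hat{w}^{BC}\sim(e^\epsilon/2)\hat{w}^B$, $\hat{E}^{BC}_r(z+\epsilon)\sim(e^\epsilon/2)^r\hat{E}^B_r(e^z)$ and $\tilde{P}^{BC}_\lambda(z+\epsilon)\to\tilde{Y}_\lambda(e^z)$ (the last via Proposition \ref{limitProp} together with the Gamma-function limits from the proof of Proposition \ref{specialisationProp}), and then multiply \eqref{JacobiRecurRels} by $2^re^{-\epsilon r}$ and let $\epsilon\to\infty$. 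Your explicit bookkeeping of the balance $(r-|I|)+|I|=r$ between the $\hat{w}$-factors in $\hat{U}$ and $\hat{V}$ and the factor coming from $\hat{E}^{BC}_r$ is exactly the cancellation the paper's proof relies on, just spelled out in more detail.
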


\begin{proof}
We let the values of the parameters $(k_1,k_2,k_3)$ be given by \eqref{kValues}. Then, $\hat{v}^{BC} = \hat{v}^B$ and $\rho^{BC} = \rho^B$. In addition, it is readily verified that
\begin{align*}
	\lim_{\epsilon\rightarrow\infty} e^{-\epsilon}\hat{w}^{BC}(z) &= \frac{1}{2}\hat{w}^B(z),\\
	\lim_{\epsilon\rightarrow\infty} e^{-\epsilon r}\hat{E}^{BC}_r(z_1,\ldots,z_n) &= \frac{1}{2^{2r}}\hat{E}^B_r(e^{z_1},\ldots,e^{z_n}).
\end{align*}
It follows from Proposition \ref{limitProp} and the proof of Proposition \ref{specialisationProp} that
\begin{equation*}
	\lim_{\epsilon\rightarrow\infty} \tilde{P}^{BC}_\lambda(z_1+\epsilon,\ldots,z_n+\epsilon) = \tilde{Y}_\lambda(e^{z_1},\ldots,e^{z_n}).
\end{equation*}
We thus obtain the statement by first multiplying both sides of \eqref{JacobiRecurRels} by $2^re^{-\epsilon r}$ and then taking the limit $\epsilon\rightarrow\infty$.
\end{proof}

\section{Orthogonality and norms}
In the one-variable case it is well known that there exist no (positive) measure on the real line with respect to which all Bessel polynomials are orthogonal; see e.g.~Section 18 in Krall and Frink \cite{KF49}. However, for a fixed maximum degree, and sufficiently negative values of $a$, the corresponding Bessel polynomials can be shown to be orthogonal with respect to just such an inner product. In this section we will establish a similar statement for the multivariable Bessel polynomials. More precisely, under the assumption that $\kappa\geq 0$, we will prove the following: given any non-negative integer $m$, and sufficiently negative $a$, the Bessel polynomials $Y_\lambda$ of degree at most $m$ form an orthogonal system with respect to the inner product in the Hilbert space $L^2(\mathbb{R}^n_+,d\mu^{(B)})$ with
\begin{equation*}
	d\mu^{(B)}(x;a,\kappa) = \prod_{i=1}^n x_i^{a-2}e^{-2/x_i}\prod_{i<j}|x_i - x_j|^{2\kappa}dx_1\cdots dx_n.
\end{equation*}
Suppose now that we apply the Gram-Schmidt orthogonalisation procedure to this inner product to obtain a set of symmetric polynomials labelled by the partitions $\lambda = (\lambda_1,\ldots,\lambda_n)$ of weight $|\lambda|\leq m$. It is then clear that, with the appropriate normalisation, these symmetric polynomials will coincide with the multivariable Bessel polynomials for the parameter values mentioned above. Moreover, since the multivariable Bessel polynomials are rational functions in $a$ and $\kappa$ (see Corollary \ref{analyticCorollary}) this fact can be immediately extended to generic parameter values.

At this point we fix a non-negative integer $m$, and consider the resulting subspace $\Lambda_n^{\leq m}\subset\Lambda_n$, as defined in Section 2.1. We let $\langle \cdot,\cdot\rangle_{a,\kappa}$ denote the inner product in $L^2(\mathbb{R}^n_+,d\mu^{(B)})$, i.e.,
\begin{equation}\label{innerProd}
	\langle f,g\rangle_{a,\kappa} = \int_{\mathbb{R}^n_+}f(x)\overline{g(x)}d\mu^{(B)}(x;a,\kappa)
\end{equation}
for any $f,g\in L^2(\mathbb{R}^n_+,d\mu^{(B)})$. It is clear that whether or not $\Lambda^{\leq m}_n$ is contained in this Hilbert space depends on the values of the parameters $a$ and $\kappa$. A precise condition is given by the following:

\begin{lemma}\label{L2Lemma}
Assume that $\kappa\geq 0$. Then $\Lambda_n^{\leq m}\subset L^2(\mathbb{R}^n_+,d\mu^{(B)})$ if and only if\begin{equation}\label{aCond}
	a < -2(m + \kappa(n-1)) + 1.
\end{equation}
\end{lemma}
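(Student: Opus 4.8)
The plan is to handle the two implications separately, reducing each to one-dimensional integrability thresholds. Throughout, the essential observation is that the factor $e^{-2/x_i}$ forces the integrand in \eqref{innerProd} to vanish (to all orders) as $x_i\to 0^+$, so that $\int_0^\infty x^c e^{-2/x}\,dx<\infty$ for \emph{every} real $c$ (substitute $u=1/x$); consequently only the behaviour of the integrand as the variables tend to infinity can obstruct square-integrability.

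For the sufficiency direction, assume \eqref{aCond}. Since the integrand is symmetric in $x_1,\dots,x_n$, it suffices to estimate the integral over the chamber $x_1\geq\cdots\geq x_n>0$. There I would combine three elementary bounds: first, $|f(x)|\leq C(1+x_1)^m$ because $f\in\Lambda_n^{\leq m}$ and $x_1=\max_i x_i$ on the chamber; second, $\prod_{i<j}|x_i-x_j|^{2\kappa}\leq\prod_{i=1}^n x_i^{2\kappa(n-i)}$, valid since $0<x_i-x_j\leq x_i$ for $i<j$ and $\kappa\geq 0$; third, the integrability of $x^c e^{-2/x}$ near the origin noted above. Enlarging the chamber back to $(0,\infty)^n$, the resulting upper bound factorises into the one-variable integrals $\int_0^\infty(1+t)^{2m}t^{a-2+2\kappa(n-1)}e^{-2/t}\,dt$ (coming from $x_1$) and $\int_0^\infty t^{a-2+2\kappa(n-i)}e^{-2/t}\,dt$ for $i=2,\dots,n$. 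The first is finite exactly when $2m+a-2+2\kappa(n-1)<-1$, which is precisely \eqref{aCond}; the remaining ones are finite when $a<1-2\kappa(n-i)$, and this is implied by \eqref{aCond} together with $\kappa\geq 0$ and $m\geq 0$.

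For the necessity direction I would argue by contraposition: assuming $a\geq-2(m+\kappa(n-1))+1$, I exhibit one polynomial in $\Lambda_n^{\leq m}$ outside the Hilbert space, namely $f=(x_1+\cdots+x_n)^m$, for which $|f(x)|^2\geq x_1^{2m}$ on $\mathbb{R}^n_+$. Restricting the integral to the region $\{x_1>4,\ 1<x_j<2\ (j=2,\dots,n)\}$, the factors $e^{-2/x_i}$ and $\prod_{j\geq 2}x_j^{a-2}$ are bounded below by positive constants, the integral $\int_{(1,2)^{n-1}}\prod_{2\leq i<j\leq n}|x_i-x_j|^{2\kappa}\,dx_2\cdots dx_n$ is a positive finite constant (the integrand is continuous and bounded for $\kappa\geq 0$, and positive off a null set), and $\prod_{j=2}^n|x_1-x_j|^{2\kappa}\geq(x_1/2)^{2\kappa(n-1)}$ for $x_1>4$. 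Hence $\langle f,f\rangle_{a,\kappa}\geq c\int_4^\infty x_1^{2m+a-2+2\kappa(n-1)}\,dx_1$ for some $c>0$, and this diverges exactly when \eqref{aCond} fails, so $\Lambda_n^{\leq m}\not\subset L^2(\mathbb{R}^n_+,d\mu^{(B)})$.

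The one point requiring genuine care — and the step I would flag as the crux — is the sufficiency estimate. Although $|f|^2$ has degree $2m$, on the ordered chamber it contributes at most $2m$ extra powers of the \emph{single} largest variable $x_1$, and those powers interact with the Vandermonde only through the $n-1$ factors $|x_1-x_j|$, producing the exponent $2m+a-2+2\kappa(n-1)$ that matches the threshold in the statement exactly; a naive uniform per-variable degree bound would instead yield the strictly stronger (hence incorrect) condition $a<-4m+1-2\kappa(n-1)$. Everything else is routine bookkeeping with one-dimensional integrals of the form $\int_0^\infty t^c e^{-2/t}\,dt$.
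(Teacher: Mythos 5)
Your proof is correct and is essentially the paper's own argument: the paper likewise restricts to the ordered chamber and bounds the Vandermonde there (writing the weight as $\prod_i x_i^{a-2+2\kappa(n-i)}e^{-2/x_i}\prod_{i<j}|1-x_j/x_i|^{2\kappa}$ and using $|1-x_j/x_i|\leq 1$, which is exactly your estimate $\prod_{i<j}|x_i-x_j|^{2\kappa}\leq\prod_i x_i^{2\kappa(n-i)}$), reduces to one-variable integrals that converge at $0$ thanks to $e^{-2/x}$ and at $\infty$ precisely under \eqref{aCond}, and gets necessity by observing that for a degree-$m$ element (the paper takes $m_{(m)}$, you take $(x_1+\cdots+x_n)^m$) the $x_1$-integral diverges when \eqref{aCond} fails. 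The only quibble is your closing aside: a per-variable bound $|f|\leq C\prod_i(1+x_i)^m$ also yields the exponent $2m+a-2+2\kappa(n-1)$ in the $x_1$-integral rather than $4m+a-2+2\kappa(n-1)$, so that remark is inaccurate but immaterial to the proof.
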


\begin{proof}
We fix two partitions $\lambda = (\lambda_1,\ldots,\lambda_n)$ and $\mu = (\mu_1,\ldots,\mu_n)$ such that $|\lambda|, |\mu|\leq m$, and consider the integral
\begin{equation}\label{mIntegral}
	\int_{\mathbb{R}^n_+} m_\lambda m_\mu d\mu^{(B)}.
\end{equation}
Since $d\mu^{(B)}(a,\kappa;x)$ is symmetric in $x$, we can replace the domain of integration by the open subset in $\mathbb{R}^n_+$ defined by the inequality
\begin{equation*}
	0 < x_n < x_{n-1} <\cdots < x_1.
\end{equation*}
We observe that
\begin{equation*}
	d\mu^{(B)}(a,\kappa;x) = \prod_{i=1}^n |x_i|^{a-2+2\kappa(n-i)} e^{-\frac{2}{x_i}}\prod_{i<j}\left|1 - \frac{x_j}{x_i}\right|^{2\kappa} dx_1\cdots dx_n.
\end{equation*}
It follows that
\begin{multline*}
	\int_{0<x_n<\cdots <x_1}m_\lambda(x) m_\mu(x) d\mu^{(B)}(a,\kappa;x)\\ < \int_{0<x_n<\cdots <x_1} m_\lambda(x) m_\mu(x)\prod_{i=1}^n |x_i|^{a-2+2\kappa(n-i)} e^{-\frac{2}{x_i}} dx_1\cdots dx_n.
\end{multline*}
Clearly, the latter integral exists if and only if
\begin{equation*}
	\lambda_1 + \mu_1 + a - 2 + 2\kappa(n-1) < -1.
\end{equation*}
By setting $\lambda = \mu = (m)$ we thus conclude that $\Lambda^{\leq m}_n\subset L^2(\mathbb{R}^n_+,d\mu^{(B)})$ if $a$ satisfies \eqref{aCond}. We now suppose that $a$ does not satisfy the condition \eqref{aCond}. We keep $\lambda = \mu = (m)$, and fix all variables $x$ except $x_1$. It is then readily seen that the integrand of \eqref{mIntegral} can not be integrated in $x_1$. Consequently, the integral does not exist.
\end{proof}

In order to establish that the multivariable Bessel polynomials contained in $\Lambda^{\leq m}_n$ form an orthogonal system we proceed to prove that their eigenoperators, obtained in Section 5, are symmetric operators on $\Lambda^{\leq m}_n$ with respect to the inner product $\langle\cdot,\cdot\rangle_{a,\kappa}$.

\begin{lemma}\label{symmetryLemma}
Assume that $\kappa\geq 0$ and that $a$ satisfies the condition \eqref{aCond}. Then
\begin{equation*}
	\left\langle D^B_d f,g\right\rangle_{a,\kappa} = \left\langle f,D^B_d g\right\rangle_{a,\kappa}
\end{equation*}
for all $f,g\in\Lambda^{\leq m}_n$ and $d = 1,\ldots,n$.
\end{lemma}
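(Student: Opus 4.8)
I would derive the symmetry of $D^B_d$ on $\Lambda_n^{\le m}$ from two facts: (i) $D^B_d$ is \emph{formally} symmetric with respect to the density of $d\mu^{(B)}$, and (ii) the boundary terms produced by the corresponding integration by parts vanish — and condition \eqref{aCond} is exactly what makes (ii) hold. The pairings in the statement are finite to begin with: under the hypotheses the $Y_\nu$ with $|\nu|\le m$ are well defined (Proposition \ref{nonDegeneracyProp}), form a basis of $\Lambda_n^{\le m}$, and are eigenfunctions of $D^B_d$ (Theorem \ref{HPDOThm}), so $D^B_d$ maps $\Lambda_n^{\le m}$ into itself and Lemma \ref{L2Lemma} gives $D^B_d f, D^B_d g \in L^2(\mathbb{R}^n_+, d\mu^{(B)})$; it remains to show the two pairings agree.

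\emph{Step 1 (formal symmetry).} I would verify that, as a differential operator on $\mathbb{R}^n_+$, $D^B_d$ equals its formal transpose with respect to the density $w(x) = \prod_i x_i^{a-2}e^{-2/x_i}\prod_{i<j}|x_i - x_j|^{2\kappa}$; equivalently, that $\Psi_0 D^B_d \Psi_0^{-1}$ is formally self-adjoint ($\Psi_0$ as in the introduction, so $w = \Psi_0^2$). For $d = 1$ this is immediate: the one-dimensional formula gives $\prod_i x_i^{a-2}e^{-2/x_i}$ as the symmetrising weight of $\sum_i x_i^2\partial_{x_i}^2 + \sum_i(ax_i+2)\partial_{x_i}$, the Calogero-type interaction contributes $\prod_{i<j}|x_i-x_j|^{2\kappa}$, and alternatively this is exactly the conjugation \eqref{SchrodOp} to the self-adjoint Schr\"odinger operator $H$. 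For $d > 1$ I would transport, along the limit transition of Proposition \ref{limitProp}, the corresponding property of the $BC_n$ Jacobi operators $D^{BC}_d$, which are formally symmetric with respect to the $BC_n$ Jacobi weight (Heckman \cite{Hec91}, Heckman and Opdam \cite{HO87}). The delicate point — and the main obstacle — is that the $BC_n$ weight itself does not converge as $\epsilon\to\infty$ (the parameters $k_1,k_2$ in \eqref{kValues} diverge); it converges only up to an $\epsilon$-dependent constant, so one argues instead that the \emph{coefficients} of the conjugated operator converge, uniformly on compacta of $\mathbb{R}^n_+$ and after the translation $z \mapsto z+\epsilon$, to those of $\Psi_0 D^B_d \Psi_0^{-1}$. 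Since ``equals its formal transpose'' is a local condition on those coefficients, it survives the limit.

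\emph{Step 2 (boundary terms).} By Step 1, $\langle D^B_d f, g\rangle_{a,\kappa} - \langle f, D^B_d g\rangle_{a,\kappa} = \int_{\mathbb{R}^n_+}\sum_i\partial_{x_i}\Phi_i\,dx$, where each $\Phi_i$ is a bilinear concomitant, a finite sum of terms $c(x)(\partial^\alpha f)\overline{(\partial^\beta g)}\,w$ with $|\alpha| + |\beta| \le 2d-1$ and $c$ built from the coefficients of $D^B_d$ and the logarithmic derivatives of $w$. Integrating over the chamber $0 < x_n < \cdots < x_1$ (and multiplying by $n!$), the wall contributions $x_i = x_{i+1}$ vanish for $\kappa \ge 0$: $D^B_d$ preserves $\Lambda_n$ (Corollary \ref{commutativityCor}), so restricted to symmetric polynomials the concomitant extends continuously across the walls, where $w$ vanishes — I would treat this standard Calogero--Sutherland point briefly. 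On the face $x_n \to 0^+$, the factor $e^{-2/x_n}$ present in $w$ and each of its derivatives kills $\Phi_n$ faster than any power. On the face $x_1 = R \to +\infty$, writing $D^B_d$ as a polynomial of degree $2d$ in the commuting operators $\theta_i = x_i\partial_{x_i}$ with coefficients bounded as $x_1 \to\infty$, and using that the $\theta_i$ preserve polynomial degrees, one sees that the boundary integral at $x_1 = R$ is $O\bigl(R^{\,2m + a - 1 + 2\kappa(n-1)}\bigr)$ — the factor $R^{a-1}$ from the $x_1$-power of $w$, at most $R^{2m}$ from the polynomial on $\{x_j < R\}$, at most $R^{2\kappa(n-1)}$ from the factors $|R - x_j|^{2\kappa}$, and a finite constant from the remaining integral in $x_2,\ldots,x_n$ (finite because $\kappa\ge0$ and $a$ is sufficiently negative). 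Condition \eqref{aCond} is precisely $2m + a - 1 + 2\kappa(n-1) < 0$, so this term vanishes as $R\to\infty$. Hence all boundary terms vanish and the two pairings coincide.
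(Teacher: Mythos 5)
Your route is genuinely different from the paper's: you import formal self-adjointness of Heckman's operators from the $BC_n$ theory through the limit transition of Proposition \ref{limitProp} and then control boundary terms globally, whereas the paper never writes down a bilinear concomitant for the full order-$2d$ operator at all. Instead it exploits the recursion \eqref{DBDef} directly: one integration by parts per step moves a single first-order factor $x_i\partial_{x_i}$ across the inner product, the logarithmic derivative $W^{-1}\partial_{x_i}(x_iW)$ reproduces exactly the coefficients appearing in \eqref{DBDef}, and summing over $\epsilon=\pm1$ and over $i,j$ makes the singular $\frac{x_i+x_j}{x_i-x_j}$ contributions pair up, yielding the transfer identity $\sum_{\epsilon,i}\langle D^B_{p,\epsilon i}f,D^B_{q,\epsilon i}g\rangle=-\sum_{\epsilon,i}\langle D^B_{p-1,\epsilon i}f,D^B_{q+1,\epsilon i}g\rangle$, which is then iterated from $(p,q)=(2d,0)$ down to $(0,2d)$.

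The genuine gap in your plan is the wall analysis in Step 2 for $d\geq 2$ and $0\leq 2\kappa<1$. The coefficients of $D^B_d$ are not bounded near $x_i=x_j$: the recursion \eqref{DBDef} multiplies and differentiates factors $\frac{x_i+x_j}{x_i-x_j}$, so the order-$2d$ operator has coefficients with poles of order up to $2d-1$ at the walls (the cancellations that make $D^B_df$ a polynomial occur only after summation over $\epsilon$ and $i$, and only when the operator is applied in full). Consequently the individual terms $c(x)(\partial^\alpha f)\overline{(\partial^\beta g)}\,w$ of your concomitant behave like $(x_i-x_{i+1})^{2\kappa-k}$ with $k\geq 1$ near a wall; they are not continuous there, and "the concomitant extends continuously across the walls, where $w$ vanishes" is false term by term and unproven for the sum, so the divergence theorem cannot be invoked as stated. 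This is precisely the point the paper's recursive scheme is designed to avoid (only simple-pole coefficients ever cross the inner product, and they are cancelled algebraically by the symmetrised sums before any boundary issue arises). A related soft spot is Step 1: formal self-adjointness of $D^{BC}_d$ with respect to the weight, as a local identity of differential operators, is not stated in the cited form in \cite{Hec91} or \cite{HO87}, and deriving it from orthogonality of the Jacobi polynomials on the torus runs into the same wall-regularity problem. Your plan can be salvaged either by adopting the paper's recursive integration by parts, or by first proving the lemma for $\kappa$ large enough that all wall terms are manifestly regular and then extending to all $\kappa\geq 0$, $a$ satisfying \eqref{aCond}, by analytic continuation of both sides in $(a,\kappa)$; but as written the wall step does not go through.
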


\begin{proof}
We let
\begin{equation*}
	W(x;a,\kappa) = \prod_{i=1}^n x_i^{a-2}e^{-2/x_i}\prod_{i<j}|x_i - x_j|^{2\kappa}
\end{equation*}
so that $d\mu^{(B)}(a,\kappa;x) = W(x;a,\kappa)dx_1\cdots dx_n$. We will prove the statement by verifying that
\begin{equation}\label{equivToSym}
	\sum_{\epsilon=\pm 1}\sum_{i=1}^n\left\langle D^B_{p,\epsilon i} f,D^B_{q,\epsilon i} g\right\rangle_{a,\kappa} = -\sum_{\epsilon=\pm 1}\sum_{i=1}^n\left\langle D^B_{p-1,\epsilon i} f,D^B_{q+1,\epsilon i} g\right\rangle_{a,\kappa}
\end{equation}
for all $f,g\in\Lambda_n^{\leq m}$, and positive and non-negative integers $p$ and $q$, respectively. By partial integration in $x_i$ we deduce from \eqref{innerProd} that
\begin{multline*}
	\left\langle x_i\frac{\partial}{\partial x_i}D^B_{p-1,i}f,D^B_{q,i}g\right\rangle_{a,\kappa}\\ = -\left\langle D^B_{p-1,i}f,x_i\frac{\partial}{\partial x_i}D^B_{q,i}g\right\rangle_{a,\kappa} - \left\langle D^B_{p-1,i}f,\left(W^{-1}\frac{\partial}{\partial x_i}x_iW\right)D^B_{q,i}g\right\rangle_{a,\kappa}.
\end{multline*}
It is readily verified that
\begin{equation*}
	W^{-1}\frac{\partial}{\partial x_i}x_iW = a -1 + \frac{2}{x_i} + \kappa(n-1)  + \kappa\sum_{j\neq i}\frac{x_i+x_j}{x_i - x_j}.
\end{equation*}
The definition, as stated in \eqref{DBDef}, of the differential operators $D^B_d$ thus imply that
\begin{equation*}
\begin{split}
	\left\langle D^B_{p,\epsilon i}f,D^B_{q,\epsilon i}g\right\rangle_{a,\kappa} &= -\left\langle D^B_{p-1,\epsilon i}f,\epsilon x_i\frac{\partial}{\partial x_i}D^B_{q,\epsilon i}g\right\rangle_{a,\kappa}\\ &\quad - \left\langle (D^B_{p-1,\epsilon i} + D^B_{p-1,-\epsilon i})f,\frac{\epsilon}{2}\left(a-1+\frac{2}{x_i}\right)D^B_{q,\epsilon i}g\right\rangle_{a,\kappa}\\ &\quad - \sum_{j\neq i}\left\langle (D^B_{p-1,\epsilon i} + D^B_{p-1,\epsilon j})f,\epsilon\frac{\kappa}{2}\frac{x_i+x_j}{x_i-x_j} D^B_{q,\epsilon i}g\right\rangle_{a,\kappa}\\ &\quad -\sum_{j\neq i}\left\langle (D^B_{p-1,\epsilon i} + D^B_{p-1,-\epsilon j})f,\epsilon\frac{\kappa}{2}D^B_{q,\epsilon i}\right\rangle_{a,\kappa}.
\end{split}
\end{equation*}
Taking the sum over $\epsilon$ and $i$, and using the invariance under the interchange of the summation indices $i$ and $j$, as well as under the substitution of $-\epsilon$ for $\epsilon$, we obtain \eqref{equivToSym}.
\end{proof}

To recapitulate, we know from Theorem \ref{HPDOThm} that the multivariable Bessel polynomials are common eigenfunctions of the differential operators $D^B_d$ for $d = 1,\ldots,n$. In addition, we have just shown that these operators, for appropriate values of $\kappa$ and $a$,  are symmetric on $\Lambda^{\leq m}_n$ with respect to the inner product $\langle\cdot,\cdot\rangle_{a,\kappa}$. Since the differential operators in the algebra $\mathscr{D}_n = \mathbb{C}\lbrack D^B_1,\ldots D^B_n\rbrack$ separate the multivariable Bessel polynomials in question, as shown in Corollary \ref{separationCor}, we thus obtain the following orthogonality result:

\begin{theorem}\label{orthogonalityThm}
Assume that $\kappa\geq 0$ and that $a$ satisfies the condition \eqref{aCond}. Then the multivariable Bessel polynomials $Y_\lambda$ with $|\lambda|\leq m$ form an orthogonal system with respect to the inner product $\langle\cdot,\cdot\rangle_{a,\kappa}$, i.e., for any two partitions $\lambda = (\lambda_1,\ldots,\lambda_n)$ and $\mu = (\mu_1,\ldots,\mu_n)$ such that $|\lambda|,|\mu|\leq m$,
\begin{equation*}
	\langle Y_\lambda, Y_\mu\rangle_{a,\kappa} = 0,\quad \text{if}~\lambda\neq\mu.
\end{equation*}
\end{theorem}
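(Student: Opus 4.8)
The plan is to run the standard spectral argument: simultaneous eigenfunctions with distinct eigenvalue-tuples are orthogonal, provided the separating operator is symmetric on the relevant Hilbert space.

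First I would record the basic hypotheses-checks. Under $\kappa\geq 0$ and \eqref{aCond}, the multivariable Bessel polynomials $Y_\nu$ with $|\nu|\leq m$ are well-defined by Proposition \ref{nonDegeneracyProp}, and they lie in $\Lambda_n^{\leq m}$, which by Lemma \ref{L2Lemma} is contained in $L^2(\mathbb{R}^n_+,d\mu^{(B)})$; in particular every inner product $\langle Y_\lambda,Y_\mu\rangle_{a,\kappa}$ appearing below is finite, and so is $\langle DY_\lambda,Y_\mu\rangle_{a,\kappa}$ for $D\in\mathscr{D}_n$, since $DY_\lambda$ is a scalar multiple of $Y_\lambda$. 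Next I would upgrade Lemma \ref{symmetryLemma} from the generators $D^B_1,\ldots,D^B_n$ to all of $\mathscr{D}_n$: by Corollary \ref{commutativityCor} these generators pairwise commute on $\Lambda_n^{\leq m}$, and a product of commuting symmetric operators is again symmetric, so every polynomial in the $D^B_d$ with real coefficients is symmetric with respect to $\langle\cdot,\cdot\rangle_{a,\kappa}$. I would also observe that, since $a$ and $\kappa$ are real, the coefficients of each $D^B_d$ are real and the eigenvalues $e^B_d(\lambda)$ of Theorem \ref{HPDOThm} are real; hence for $D\in\mathscr{D}_n$ with real coefficients the eigenvalue $\gamma(D)(\lambda+\rho^B)$ determined by \eqref{gammaDef} is real.

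Now fix two partitions $\lambda\neq\mu$ with $|\lambda|,|\mu|\leq m$. By Corollary \ref{separationCor} there is a $D\in\mathscr{D}_n$ with $\gamma(D)(\lambda+\rho^B)\neq\gamma(D)(\mu+\rho^B)$; inspecting its proof (which, via Proposition \ref{HPDOProp}, produces $D=D_p$ for a real invariant polynomial $p$) one may take $D$ to have real coefficients, so both eigenvalues are real. Using \eqref{gammaDef}, linearity of $\langle\cdot,\cdot\rangle_{a,\kappa}$ in its first slot, conjugate-linearity in its second slot, reality of the eigenvalues, and the symmetry of $D$, I would then compute
\[
	\gamma(D)(\lambda+\rho^B)\,\langle Y_\lambda,Y_\mu\rangle_{a,\kappa}=\langle DY_\lambda,Y_\mu\rangle_{a,\kappa}=\langle Y_\lambda,DY_\mu\rangle_{a,\kappa}=\gamma(D)(\mu+\rho^B)\,\langle Y_\lambda,Y_\mu\rangle_{a,\kappa},
\]
so that $\big(\gamma(D)(\lambda+\rho^B)-\gamma(D)(\mu+\rho^B)\big)\langle Y_\lambda,Y_\mu\rangle_{a,\kappa}=0$ and therefore $\langle Y_\lambda,Y_\mu\rangle_{a,\kappa}=0$.

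The substantive work has already been done: the symmetry of the generators (Lemma \ref{symmetryLemma}), which absorbs the boundary-term analysis in the partial integrations, and the separation of partitions by $\mathscr{D}_n$ (Corollary \ref{separationCor}), so the argument above is essentially formal. The only point needing a little care is the bookkeeping of the second paragraph — checking that symmetry and reality pass from the generators $D^B_d$ to an arbitrary element of $\mathscr{D}_n$ — and this is exactly where the commutativity of Corollary \ref{commutativityCor} is genuinely used.
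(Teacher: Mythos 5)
Your proposal is correct and follows essentially the same route as the paper, which likewise deduces orthogonality from the $Y_\lambda$ being common eigenfunctions of the $D^B_d$ (Theorem \ref{HPDOThm}), the symmetry of these operators on $\Lambda_n^{\leq m}$ (Lemma \ref{symmetryLemma}), and the separation property of $\mathscr{D}_n$ (Corollary \ref{separationCor}). Your extra bookkeeping (extending symmetry and reality of eigenvalues from the generators to all of $\mathscr{D}_n$ via Corollary \ref{commutativityCor}) just makes explicit what the paper leaves implicit.
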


We proceed to compute the squared norms
\begin{equation*}
	N_\lambda(a,\kappa) = \langle Y_\lambda, Y_\lambda\rangle_{a,\kappa},
\end{equation*}
thus enabling us to convert the multivariable Bessel polynomials contained in $\Lambda^{\leq m}_n$ into an orthonormal system. We will proceed in two steps: first we use the recurrence relations obtained in Section 6 to reduce the problem to that of computing $N_{(0)}$, this latter normalisation factor is then computed using a well-known limiting case of an integral formula due Selberg \cite{Sel44}; see also Forrester and Warnaar \cite{FW07}. The first step is accomplished in the following:

\begin{lemma}\label{NormFactorQuotientLemma}
Assume that $\kappa\geq 0$ and that $a$ satisfies the condition \eqref{aCond}. Let $\lambda = (\lambda_1,\ldots,\lambda_n)$ be a partition such that $|\lambda|\leq m$. Then
\begin{equation}\label{NormFactorQuotient}
	\frac{N_\lambda}{N_{(0)}} = (-4)^{|\lambda|}\frac{\hat{\Delta}^B_+(\rho^B+\lambda)\hat{\Delta}^B_-(\rho^B+\lambda)}{\hat{\Delta}^B_+(\rho^B)\hat{\Delta}^B_-(\rho^B)}.
\end{equation}
\end{lemma}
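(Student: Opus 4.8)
The plan is to work throughout with the renormalised polynomials $\tilde{Y}_\lambda$ of \eqref{tildeNormalisation} and with $\tilde{N}_\lambda := \langle\tilde{Y}_\lambda,\tilde{Y}_\lambda\rangle_{a,\kappa}$. Since the renormalisation constant in \eqref{tildeNormalisation} is real when $a$ and $\kappa$ are real, one has $N_\lambda = 2^{2|\lambda|}(\hat{\Delta}^B_+(\rho^B+\lambda)/\hat{\Delta}^B_+(\rho^B))^2\,\tilde{N}_\lambda$ and $N_{(0)} = \tilde{N}_{(0)}$, so that \eqref{NormFactorQuotient} is equivalent to
\begin{equation*}
	\frac{\tilde{N}_\lambda}{\tilde{N}_{(0)}} = (-1)^{|\lambda|}\,\frac{\hat{\Delta}^B_-(\rho^B+\lambda)\,\hat{\Delta}^B_+(\rho^B)}{\hat{\Delta}^B_-(\rho^B)\,\hat{\Delta}^B_+(\rho^B+\lambda)} .
\end{equation*}
I would establish this by reducing to a single-box recursion and then telescoping; only the simplest Pieri relation of Proposition \ref{recurRelProp} is needed, not the higher recurrences of Theorem \ref{RecurRelsThm}. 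As in the proof of Proposition \ref{recurRelProp}, one may at the cost of strengthening \eqref{aCond} to hold with $m = |\lambda|+1$ assume that all the partitions occurring below have weight at most $m$; since $N_\lambda$ depends analytically on $a$ (for $\kappa\geq 0$ fixed) throughout the half-line on which the defining integral converges, and $Y_\lambda$ is rational in $a$ by Corollary \ref{analyticCorollary}, the resulting identity then extends to all admissible parameter values by analytic continuation.

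The single-box step exploits that multiplication by $p_1(x) = x_1+\cdots+x_n$, being real-valued on $\mathbb{R}^n_+$, is symmetric for $\langle\cdot,\cdot\rangle_{a,\kappa}$. Fix a partition $\lambda$ and an index $i$ with $\lambda^{(i)}$ a partition. Expanding $\frac{1}{2}p_1\tilde{Y}_\lambda$ by \eqref{simplestRecurRel} and pairing with $\tilde{Y}_{\lambda^{(i)}}$, every term but one vanishes by the orthogonality of Theorem \ref{orthogonalityThm}, leaving
\begin{equation*}
	\tfrac{1}{2}\,\langle p_1\tilde{Y}_\lambda,\tilde{Y}_{\lambda^{(i)}}\rangle_{a,\kappa} = \hat{V}_i(\rho^B+\lambda)\,\tilde{N}_{\lambda^{(i)}} .
\end{equation*}
Similarly, expanding $\frac{1}{2}p_1\tilde{Y}_{\lambda^{(i)}}$ by \eqref{simplestRecurRel}, pairing with $\tilde{Y}_\lambda$, and using $(\lambda^{(i)})_{(i)} = \lambda$, only the matching lowering term survives:
\begin{equation*}
	\tfrac{1}{2}\,\langle p_1\tilde{Y}_{\lambda^{(i)}},\tilde{Y}_\lambda\rangle_{a,\kappa} = \hat{V}_{-i}(\rho^B+\lambda^{(i)})\,\tilde{N}_\lambda .
\end{equation*}
Equating the two left-hand sides by symmetry of $p_1$ (everything in sight being real) gives the recursion
\begin{equation*}
	\frac{\tilde{N}_{\lambda^{(i)}}}{\tilde{N}_\lambda} = \frac{\hat{V}_{-i}(\rho^B+\lambda^{(i)})}{\hat{V}_i(\rho^B+\lambda)} ,
\end{equation*}
and one checks that, for those $i$ actually contributing to the Pieri sum, $\hat{V}_i(\rho^B+\lambda)$ is finite and nonzero in the parameter range at hand, so the quotient is well defined.

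To finish, I would fix any standard tableau of shape $\lambda$, i.e.\ a chain $(0) = \mu^{[0]}\subset\mu^{[1]}\subset\cdots\subset\mu^{[|\lambda|]} = \lambda$ with $|\mu^{[k]}| = k$, say $\mu^{[k]} = \mu^{[k-1]} + e_{i_k}$, and multiply the one-box ratios along it. Inserting the explicit form \eqref{simplestRecurRelCoeffs} of $\hat{V}_{\pm i}$ in terms of $\hat{v}^B$ and $\hat{w}^B$, and rewriting via Lemma \ref{differenceEqsLemma} each occurring factor $\hat{v}^B(\pm z_i\pm z_j)$, $\hat{w}^B(\pm z_i)$ as a one-step shift ratio of $\hat{d}^B_{v,\pm}$, resp.\ $\hat{d}^B_{w,\pm}$, one sees that the passage from $\rho^B+\mu^{[k-1]}$ to $\rho^B+\mu^{[k]}$ turns $\prod_k \hat{V}_{-i_k}(\rho^B+\mu^{[k]})/\hat{V}_{i_k}(\rho^B+\mu^{[k-1]})$ into a telescoping product over the factors defining $\hat{\Delta}^B_+$ and $\hat{\Delta}^B_-$, which collapses to $\hat{\Delta}^B_-(\rho^B+\lambda)\hat{\Delta}^B_+(\rho^B)/(\hat{\Delta}^B_-(\rho^B)\hat{\Delta}^B_+(\rho^B+\lambda))$; the sign $(-1)^{|\lambda|}$ is produced entirely by the extra minus sign in $\hat{d}^B_{w,-}(z+1) = -\hat{w}^B(-z-1)\hat{d}^B_{w,-}(z)$, exactly one per box. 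Restoring the renormalisation factor relating $\tilde{N}_\lambda$ to $N_\lambda$ then yields the $(-4)^{|\lambda|}$ in \eqref{NormFactorQuotient}. I expect this last bookkeeping to be the main obstacle: one must verify that the telescoping really closes up, in particular that the outcome is independent of the chosen tableau (which is forced, the recursion being consistent) and that the arguments $\pm z_i\pm z_j$ and $\pm z_i$ in \eqref{simplestRecurRelCoeffs} align precisely with the shift relations of Lemma \ref{differenceEqsLemma}, with all the $s_\pm$-offsets. An alternative that avoids part of this computation is to deduce \eqref{NormFactorQuotient} directly from the known norm formula for the $BC_n$ Jacobi polynomials in van Diejen \cite{vD99}, via the limit transition of Proposition \ref{limitProp}, exactly as in the proof of Theorem \ref{RecurRelsThm}.
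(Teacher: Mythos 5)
Your proposal is correct, but it follows a genuinely different route from the paper. The paper builds $\lambda$ up by column increments $\omega_r = e_1+\cdots+e_r$ and, for each such step, reads off $\tilde{N}_{\mu+\omega_r}/\tilde{N}_\mu$ from the full family of Pieri formulae of Theorem \ref{RecurRelsThm} (i.e.\ van Diejen's recurrences carried through the limit transition), the point being that for these particular increments the coefficient ratio is, by Lemma \ref{differenceEqsLemma}, literally a one-step ratio of $\hat{\Delta}^B_\pm$. You instead build $\lambda$ up one box at a time, using only the degree-one Pieri relation of Proposition \ref{recurRelProp} together with the symmetry of multiplication by $p_1$ and Theorem \ref{orthogonalityThm}; this gives $\hat{V}_i(\rho^B+\lambda)\tilde{N}_{\lambda^{(i)}} = \hat{V}_{-i}(\rho^B+\lambda^{(i)})\tilde{N}_\lambda$, which is exactly right. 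The "bookkeeping" you flag does close up: although for $i>1$ one does \emph{not} have $\hat{V}_i(z) = \hat{\Delta}^B_+(z+e_i)/\hat{\Delta}^B_+(z)$ individually (the factors with $j<i$ come out as $\hat{v}^B(z_j+z_i)/\hat{v}^B(z_j-z_i-1)$ rather than $\hat{v}^B(z_i\pm z_j)$), these mismatches cancel in the quotient, and one checks directly from Lemma \ref{differenceEqsLemma} that
\begin{equation*}
	\frac{\hat{V}_{-i}(z+e_i)}{\hat{V}_i(z)} = -\,\frac{\hat{\Delta}^B_-(z+e_i)\,\hat{\Delta}^B_+(z)}{\hat{\Delta}^B_-(z)\,\hat{\Delta}^B_+(z+e_i)},
\end{equation*}
so the product along any standard tableau telescopes to $(-1)^{|\lambda|}\hat{\Delta}^B_-(\rho^B+\lambda)\hat{\Delta}^B_+(\rho^B)\big/\bigl(\hat{\Delta}^B_-(\rho^B)\hat{\Delta}^B_+(\rho^B+\lambda)\bigr)$, independently of the tableau, and restoring the normalisation \eqref{tildeNormalisation} gives \eqref{NormFactorQuotient}. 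What each approach buys: yours needs only the simplest recurrence (so it is independent of Theorem \ref{RecurRelsThm} and of van Diejen's higher formulae), at the price of the slightly more delicate per-box cancellation above, of the strengthening of \eqref{aCond} to $m=|\lambda|+1$ (since the top pairing involves $p_1\tilde{Y}_\lambda$ of degree $|\lambda|+1$) with the subsequent analytic continuation in $a$, and of checking non-vanishing of $\hat{V}_i(\rho^B+\lambda)$ (which holds for $\kappa>0$ and $a$ as in \eqref{aCond}; the boundary case $\kappa=0$ is reached by continuity). The paper's column increments keep all partitions inside weight $\leq m$ and make the coefficient ratios collapse immediately, but at the cost of invoking the full Theorem \ref{RecurRelsThm}.
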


\begin{proof}
We will prove the following equivalent statement: the squared norms
\begin{equation*}
	\tilde{N}_\mu = \left\langle \tilde{Y}_\mu, \tilde{Y}_\mu\right\rangle_{a,\kappa}
\end{equation*}
of the 'renormalised' multivariable Bessel polynomials $\tilde{Y}_\mu$ with $|\mu|\leq m$, as defined in \eqref{tildeNormalisation}, are given by
\begin{equation}\label{TildeNormFactorQuotient}
	\frac{\tilde{N}_\mu}{\tilde{N}_{(0)}} = (-1)^{|\mu|}\frac{\hat{\Delta}^B_+(\rho^B)\hat{\Delta}^B_-(\rho^B+\mu)}{\hat{\Delta}^B_-(\rho^B)\hat{\Delta}^B_+(\rho^B+\mu)}.
\end{equation}
We note that each partition of length at most $n$ can be written as a unique linear combination with positive integer coefficients of the integer vectors $\omega_r = e_1 +\cdots + e_r$ with $r = 1,\ldots,n$. To prove the statement it is thus sufficient to verify that both sides of \eqref{TildeNormFactorQuotient} transform in the same manner under the replacement of $\mu$ by $\mu + \omega_r$ for each $r = 1,\ldots,n$. Using Theorems \ref{RecurRelsThm} and \ref{orthogonalityThm} we deduce that
\begin{equation*}
\begin{split}
	\tilde{N}_{\mu+\omega_r} &= \frac{\langle \hat{E}^B_r Y_\mu, Y_{\mu+\omega_r}\rangle_{a,\kappa}}{\hat{V}^B_{\lbrace 1,\ldots,r\rbrace,\lbrace r+1,\ldots,n\rbrace}(\rho^B+\mu)}\\ &= \frac{\hat{V}^B_{\lbrace 1,\ldots,r\rbrace,\lbrace r+1,\ldots,n\rbrace}(-\rho^B-\mu-\omega_r)}{\hat{V}^B_{\lbrace 1,\ldots,r\rbrace,\lbrace r+1,\ldots,n\rbrace}(\rho^B+\mu)}\tilde{N}_\mu,
\end{split}
\end{equation*}
where we used the expansion of $\hat{E}^B_r Y_\mu$ to obtain the first equality, and that of $\hat{E}^B_r Y_{\mu+\omega_r}$ to obtain the second. It follows from Lemma \ref{differenceEqsLemma} that
\begin{equation*}
	\hat{V}^B_{\lbrace 1,\ldots,r\rbrace,\lbrace r+1,\ldots,n\rbrace}(z) = \frac{\hat{\Delta}^B_+(z+\omega_r)}{\hat{\Delta}^B_+(z)},
\end{equation*}
and that
\begin{equation*}
	\hat{V}^B_{\lbrace 1,\ldots,r\rbrace,\lbrace r+1,\ldots,n\rbrace}(-z-\omega_r) = (-1)^r\frac{\hat{\Delta}^B_-(z+\omega_r)}{\hat{\Delta}^B_-(z)}.
\end{equation*}
We thus conclude that
\begin{equation*}
	(-1)^{|\omega_r|}\frac{\hat{\Delta}^B_-(\rho^B+\mu+\omega_r)}{\hat{\Delta}^B_+(\rho^B+\mu+\omega_r)} = \frac{\hat{V}^B_{\lbrace 1,\ldots,r\rbrace,\lbrace r+1,\ldots,n\rbrace}(-\rho^B-\mu-\omega_r)}{\hat{V}^B_{\lbrace 1,\ldots,r\rbrace,\lbrace r+1,\ldots,n\rbrace}(\rho^B+\mu)} \frac{\hat{\Delta}^B_-(\rho^B+\mu)}{\hat{\Delta}^B_+(\rho^B+\mu)},
\end{equation*}
and the statement follows.
\end{proof}

There remains only to compute the normalisation factor $N_{(0)}$.

\begin{lemma}\label{ConstNormFactorLemma}
Assume that $\kappa\geq 0$ and that $a < -2\kappa(n-1) + 1$. Then
\begin{equation}\label{ConstNormFactor}
	N_{(0)} = 2^{\kappa n(n-1)+(a-1)n}n!\prod_{i=1}^n\frac{\Gamma(i\kappa)\Gamma(-a-\kappa(n+i-2)+1)}{\Gamma(\kappa)}.
\end{equation}
\end{lemma}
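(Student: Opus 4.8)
The plan is to evaluate $N_{(0)} = \langle Y_{(0)}, Y_{(0)}\rangle_{a,\kappa}$ directly as an $n$-fold integral, reducing it to a known Selberg-type integral. Since $Y_{(0)} = P_{(0)} = 1$, we have
\begin{equation*}
	N_{(0)} = \int_{\mathbb{R}^n_+}\prod_{i=1}^n x_i^{a-2}e^{-2/x_i}\prod_{i<j}|x_i-x_j|^{2\kappa}\,dx_1\cdots dx_n.
\end{equation*}
The first step is a change of variables to bring this into recognisable form. The natural substitution is $x_i = 1/y_i$ (equivalently $y_i = 1/x_i$), so that $dx_i = -y_i^{-2}dy_i$ and, using $|x_i - x_j|^{2\kappa} = |y_i - y_j|^{2\kappa}/(y_iy_j)^{2\kappa}$ together with $\prod_{i<j}(y_iy_j)^{-2\kappa} = \prod_i y_i^{-2\kappa(n-1)}$, one obtains
\begin{equation*}
	N_{(0)} = \int_{\mathbb{R}^n_+}\prod_{i=1}^n y_i^{-a-2\kappa(n-1)}e^{-2y_i}\prod_{i<j}|y_i-y_j|^{2\kappa}\,dy_1\cdots dy_n.
\end{equation*}
A further rescaling $y_i = t_i/2$ pulls out an overall power of $2$: each factor $y_i^{-a-2\kappa(n-1)}$ contributes $2^{a + 2\kappa(n-1)}$ (note the exponent; I will track the precise bookkeeping), each $|y_i-y_j|^{2\kappa}$ contributes $2^{-2\kappa}$, and each $dy_i$ contributes $2^{-1}$, leaving an integral of the form $\int_{\mathbb{R}^n_+}\prod_i t_i^{c}e^{-t_i}\prod_{i<j}|t_i-t_j|^{2\kappa}\,dt$ with $c = -a - 2\kappa(n-1) - 1$.

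The second step is to invoke the Mehta-type (Laguerre) limit of the Selberg integral: for $\mathrm{Re}\,\gamma > 0$ and $\mathrm{Re}\,\kappa \geq 0$,
\begin{equation*}
	\int_{\mathbb{R}^n_+}\prod_{i=1}^n t_i^{\gamma-1}e^{-t_i}\prod_{i<j}|t_i-t_j|^{2\kappa}\,dt = \prod_{i=1}^n\frac{\Gamma(\gamma + \kappa(i-1))\Gamma(1 + i\kappa)}{\Gamma(1+\kappa)};
\end{equation*}
this is the standard limiting case of Selberg's integral recorded, e.g., in Forrester and Warnaar \cite{FW07}. Here $\gamma - 1 = c = -a-2\kappa(n-1)-1$, i.e. $\gamma = -a - 2\kappa(n-1)$, and the hypothesis $a < -2\kappa(n-1)+1$ is exactly what guarantees $\gamma > 0$ (for $\kappa \geq 0$), so the integral converges and the formula applies. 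One then has $\gamma + \kappa(i-1) = -a - \kappa(n-1) - \kappa(n-i) = -a - \kappa(2n - i - 1)+ \kappa \cdot 0$; reindexing the product $i \mapsto n+1-i$ turns $\Gamma(\gamma+\kappa(i-1))$ into $\Gamma(-a - \kappa(n+i-2)+1)$ up to a shift I will verify, and $\Gamma(1+i\kappa)$ together with $\Gamma(1+\kappa)$ in the denominator rearranges, via $\Gamma(1+i\kappa) = i\kappa\,\Gamma(i\kappa)$ and $\Gamma(1+\kappa)=\kappa\,\Gamma(\kappa)$, into $\prod_i \Gamma(i\kappa)/\Gamma(\kappa)$ times a combinatorial factor $\prod_{i=1}^n(i\kappa/\kappa) = n!$. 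Collecting the power of $2$ from the rescaling gives the prefactor $2^{\kappa n(n-1) + (a-1)n}$ claimed in \eqref{ConstNormFactor}.

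The main obstacle is purely the bookkeeping: matching the shifted-Gamma product on the right-hand side of the Selberg-Laguerre formula to the stated form $\prod_{i=1}^n \Gamma(i\kappa)\Gamma(-a-\kappa(n+i-2)+1)/\Gamma(\kappa)$ requires a careful reindexing together with the functional equations $\Gamma(z+1)=z\Gamma(z)$ to absorb the "$1+$" shifts, and then the power-of-two accounting from the two successive rescalings must be assembled to yield exactly $2^{\kappa n(n-1)+(a-1)n}$. None of this is conceptually hard, but each exponent and each Gamma-argument shift has to be tracked exactly; I would carry out the $x_i \mapsto 1/x_i$ step, then the rescaling, then cite \cite{FW07} for the resulting integral, and finally do the Gamma-function rearrangement in a single displayed computation. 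Convergence at both ends (near $x_i = 0$, where $e^{-2/x_i}$ dominates any power, and near $x_i = \infty$, controlled by $x_i^{a-2}$ with $a$ as in the hypothesis) is immediate and can be noted in passing.
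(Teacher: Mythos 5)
Your proposal is correct and follows essentially the same route as the paper, which performs your two substitutions in one step, $y_i=2/x_i$, and then quotes the same Laguerre limiting case of the Selberg integral (there inferred from the $B_n$ case of Macdonald's Conjecture 6.1, equivalently the Forrester--Warnaar form you cite), with the identical power-of-two prefactor $2^{\kappa n(n-1)+(a-1)n}$. The only repair needed is your off-by-one in the exponent: after the rescaling the power of $t_i$ is $c=-a-2\kappa(n-1)$ (not $-a-2\kappa(n-1)-1$), so $\gamma=-a-2\kappa(n-1)+1$, and then $\Gamma(\gamma+\kappa(i-1))$, reindexed by $i\mapsto n+1-i$, is exactly $\Gamma(-a-\kappa(n+i-2)+1)$ with no residual shift, while the hypothesis $a<-2\kappa(n-1)+1$ is precisely $\gamma>0$ as you asserted.
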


\begin{proof}
We first observe that Lemma \ref{L2Lemma} ensures that $N_{(0)}$ is well-defined. Changing variables to $(y_1,\ldots,y_n) = (2/x_1,\ldots,2/x_n)$ we find that
\begin{equation*}
	N_{(0)} = 2^{\kappa n(n-1)+(a-1)n}\int_0^\infty dy_1\cdots \int_0^\infty dy_n \prod_{i=1}^n y_i^{-a-2\kappa(n-1)}e^{-y_i}\prod_{i<j}|y_i - y_j|^{2\kappa}.
\end{equation*}
As indicated above, this integral is a well-known limiting case of the Selberg integral. Indeed, by specialising to the root system $B_n$ in Conjecture 6.1 in Macdonald \cite{Mac82} it is readily inferred that
\begin{multline*}
	\int_0^\infty dy_1\cdots \int_0^\infty dy_n \prod_{i=1}^n y_i^{\alpha-\kappa(n-1)-1} e^{-y_i}\prod_{i<j}|y_i - y_j|^{2\kappa}\\ = n!\prod_{i=1}^n\frac{\Gamma(i\kappa)\Gamma(\alpha-\kappa(i-1))}{\Gamma(\kappa)}
\end{multline*}
from which the statement clearly follows.
\end{proof}

We proceed to combine Lemmas \ref{NormFactorQuotientLemma} and \ref{ConstNormFactorLemma} to deduce a fully explicit expression for the squared norms $N_\lambda$.

\begin{proposition}\label{normFactorsProp}
Assume that $\kappa\geq 0$ and that $a$ satisfies the condition \eqref{aCond}. Let $\lambda = (\lambda_1,\ldots,\lambda_n)$ be a partition such that $|\lambda|\leq m$. Then
\begin{multline*}
	N_\lambda = c_\lambda\prod_{1\leq i<j\leq n}\Bigg(\frac{\Gamma(\kappa(j-i+1)+\lambda_i-\lambda_j)\Gamma(\kappa(j-i-1)+1+\lambda_i-\lambda_j)}{\Gamma(\kappa(j-i)+\lambda_i-\lambda_j)\Gamma(\kappa(j-i)+1+\lambda_i-\lambda_j)}\\ \qquad\qquad\qquad\quad \times\frac{\Gamma(-a-\kappa(2n-i-j)+2-\lambda_i-\lambda_j)}{\Gamma(-a-\kappa(2n-i-j+1)+2-\lambda_i-\lambda_j)}\\ \shoveright{\times\frac{\Gamma(-a-\kappa(2n-i-j)+1-\lambda_i-\lambda_j)}{\Gamma(-a-\kappa(2n-i-j-1)+1-\lambda_i-\lambda_j)}\Bigg)}\\ \times\prod_{i=1}^n\Bigg(\frac{\Gamma(-a-2\kappa(n-i)+2-2\lambda_i)\Gamma(-a-2\kappa(n-i)+1-2\lambda_i)}{\Gamma(-a-\kappa(n-i)+2-\lambda_i)}\\ \times\Gamma(\kappa(n-i)+1+\lambda_i)\Bigg)
\end{multline*}
with
\begin{equation*}
	c_\lambda = 2^{2|\lambda|+\kappa n(n-1)+(a-1)n} n!.
\end{equation*}
In particular, it is manifest that $N_\lambda(a,\kappa)$ is a positive analytic function of $a$ and $\kappa$ (under the assumed conditions on these parameters).
\end{proposition}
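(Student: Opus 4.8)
The plan is to combine the two lemmas immediately preceding this proposition. By Lemma \ref{NormFactorQuotientLemma}, under the stated conditions on $a$ and $\kappa$ we have the quotient formula
\begin{equation*}
	N_\lambda = N_{(0)}\,(-4)^{|\lambda|}\frac{\hat{\Delta}^B_+(\rho^B+\lambda)\hat{\Delta}^B_-(\rho^B+\lambda)}{\hat{\Delta}^B_+(\rho^B)\hat{\Delta}^B_-(\rho^B)},
\end{equation*}
and by Lemma \ref{ConstNormFactorLemma} the factor $N_{(0)}$ is given explicitly in terms of Gamma functions. So the whole task reduces to inserting the latter expression and then unwinding the ratios $\hat{\Delta}^B_\pm(\rho^B+\lambda)/\hat{\Delta}^B_\pm(\rho^B)$ into a product of Gamma functions using the definitions of $\hat{\Delta}^B_\pm$, $\hat{d}^B_{v,\pm}$, $\hat{d}^B_{w,\pm}$, and the components of $\rho^B$ recorded in \eqref{rhoBVector}.

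First I would substitute $\rho^B_i = \kappa(n-i) + (a-1)/2$ into
\begin{equation*}
	\hat{\Delta}^B_\pm(z) = \prod_{i<j}\hat{d}^B_{v,\pm}(z_i - z_j)\hat{d}^B_{v,\pm}(z_i + z_j)\prod_{i=1}^n \hat{d}^B_{w,\pm}(z_i),
\end{equation*}
evaluated first at $z = \rho^B + \lambda$ and then at $z = \rho^B$, and form the quotient factor by factor. For the pair factors $\hat{d}^B_{v,\pm}$ one uses $(\rho^B+\lambda)_i - (\rho^B+\lambda)_j = \kappa(j-i) + \lambda_i - \lambda_j$ and $(\rho^B+\lambda)_i + (\rho^B+\lambda)_j = \kappa(2n-i-j) + (a-1) + \lambda_i+\lambda_j$, while at $\lambda = 0$ the arguments are $\kappa(j-i)$ and $\kappa(2n-i-j)+(a-1)$; the ratio $\Gamma(\pm\kappa + z + s_\pm)/\Gamma(z+s_\pm)$ together with its value at the $\lambda=0$ argument telescopes into the first three Gamma-quotients displayed in the proposition (the $s_+=0$ and $s_-=1$ shifts produce the two distinct "$+2$" and "$+1$" numerators in the $i<j$ block and the "$2-$" versus "$1-$" patterns in the $\lambda_i+\lambda_j$ arguments). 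Similarly, the one-body factors $\hat{d}^B_{w,\pm}$ with argument $(\rho^B+\lambda)_i = \kappa(n-i)+(a-1)/2+\lambda_i$, using $\hat{d}^B_{w,\pm}(z) = \Gamma(\pm(a-1)/2+z+s_\pm)/\Gamma(2z+s_\pm)$, give the final product over $i$, the $2\lambda_i$ appearing because of the $\Gamma(2z+s_\pm)$ in the denominator. Finally I would collect the elementary prefactors: the $(-4)^{|\lambda|}$ from Lemma \ref{NormFactorQuotientLemma} combines with the $2^{\kappa n(n-1)+(a-1)n}n!\prod_i\Gamma(i\kappa)/\Gamma(\kappa)$ from Lemma \ref{ConstNormFactorLemma}, and one checks that the $\prod_i\Gamma(i\kappa)/\Gamma(\kappa)$ and the $\Gamma(-a-\kappa(n+i-2)+1)$ pieces of $N_{(0)}$ are precisely what is needed to cancel or merge with the $\lambda=0$ normalising denominators $\hat{\Delta}^B_\pm(\rho^B)$, leaving $c_\lambda = 2^{2|\lambda|+\kappa n(n-1)+(a-1)n}n!$ and the factor $\Gamma(\kappa(n-i)+1+\lambda_i)$ in the last product.

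The positivity and analyticity claim is then read off from the explicit formula: under $\kappa\geq 0$ and \eqref{aCond} every Gamma-function argument appearing is real, and one checks sign by sign that each Gamma-quotient is positive (this is essentially the content of Lemmas \ref{L2Lemma} and \ref{symmetryLemma} guaranteeing $N_\lambda = \|Y_\lambda\|_{a,\kappa}^2 > 0$, so the sign bookkeeping is a consistency check rather than a separate argument), while the absence of poles or zeros in the relevant parameter range gives analyticity.

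The routine-but-delicate part — and the one place to be careful — is the Gamma-function bookkeeping: matching the $s_+=0$ versus $s_-=1$ shifts correctly so that the $\hat{\Delta}^B_+$ contribution yields the numerators with "$+2$" and "$+1$" in the $i<j$ block while $\hat{\Delta}^B_-$ yields the "$1-$" shifted versions, and keeping the powers of $2$ straight between the $(-4)^{|\lambda|}$ of Lemma \ref{NormFactorQuotientLemma} and the $2^{\cdots}$ of Lemma \ref{ConstNormFactorLemma} so that the final $c_\lambda$ comes out with exponent $2|\lambda|+\kappa n(n-1)+(a-1)n$. There is no conceptual obstacle here; the only risk is an arithmetic slip in the telescoping, which can be guarded against by checking the $n=1$ case against the classical Bessel-polynomial norm and the $\lambda=0$ case against Lemma \ref{ConstNormFactorLemma}.
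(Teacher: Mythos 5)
Your proposal is correct and follows essentially the same route as the paper: combine Lemma \ref{NormFactorQuotientLemma} with Lemma \ref{ConstNormFactorLemma} and unwind the ratios $\hat{\Delta}^B_\pm(\rho^B+\lambda)/\hat{\Delta}^B_\pm(\rho^B)$ into Gamma-function quotients, tracking the $s_\pm$ shifts, the $(-4)^{|\lambda|}$ sign and the powers of $2$. The paper merely organizes the same bookkeeping via Pochhammer/falling-factorial identities and two explicit product identities that convert the $\lambda$-independent $i<j$ factors into single products over $i$ so they cancel against the $\Gamma(i\kappa)$ and $\Gamma(-a-\kappa(n+i-2)+1)$ pieces of $N_{(0)}$, which is exactly the cancellation you assert.
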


\begin{proof}
We will use the notation $(\alpha)_m$ for the falling factorial, defined by $(\alpha)_0 = 1$ and
\begin{equation*}
	(\alpha)_m = \alpha(\alpha - 1)\cdots (\alpha -m + 1) = \frac{\Gamma(\alpha+1)}{\Gamma(\alpha-m+1)}
\end{equation*}
for $m>0$. In addition, we let $(\alpha_1,\ldots,\alpha_p)_m = (\alpha_1)_m\cdots(\alpha_p)_m$, and similarly for the Pochhammer symbol $\lbrack\alpha\rbrack_m$. It follows from the formulae $\Gamma(\alpha+m)/\Gamma(\alpha) = \lbrack\alpha\rbrack_m$ and $(-1)^m\lbrack\alpha\rbrack_m = (-\alpha)_m$ that
\begin{multline*}
	(-1)^{|\lambda|}\frac{\hat{\Delta}^B_+(\rho^B+\lambda)\hat{\Delta}^B_-(\rho^B+\lambda)}{\hat{\Delta}^B_+(\rho^B)\hat{\Delta}^B_-(\rho^B)} = \prod_{i<j}\frac{\big\lbrack\kappa(j-i+1),\kappa(j-i-1)+1\big\rbrack_{\lambda_i-\lambda_j}}{\big\lbrack\kappa(j-i),\kappa(j-i)+1\big\rbrack_{\lambda_i-\lambda_j}}\\ \times \frac{\big(-a-\kappa(2n-i-j+1)+1,-a-\kappa(2n-i-j-1)\big)_{\lambda_i+\lambda_j}}{\big(-a-\kappa(2n-i-j)+1,-a-\kappa(2n-i-j)\big)_{\lambda_i+\lambda_j}}\\ \times \prod_{i=1}^n \frac{\big\lbrack\kappa(n-i)+1\big\rbrack_{\lambda_i}\big(-a-\kappa(n-i)+1\big)_{\lambda_i}}{\big(-a-2\kappa(n-i)+1,-a-2\kappa(n-i)\big)_{2\lambda_i}}.
\end{multline*}
Using the identity
\begin{equation*}
	\prod_{1\leq i<j\leq n}\frac{\Gamma(\alpha(j-i+1)+\beta)}{\Gamma(\alpha(j-i)+\beta)} = \prod_{i=1}^n\frac{\Gamma(\pm i\alpha+\beta)}{\Gamma((1-i\pm i)\alpha+\beta)},
\end{equation*}
valid for generic $\alpha$ and $\beta$, it is readily verified that
\begin{equation*}
	\prod_{1\leq i<j\leq n}\frac{\Gamma(\kappa(j-i))\Gamma(\kappa(j-i)+1)}{\Gamma(\kappa(j-i+1))\Gamma(\kappa(j-i-1)+1)} = \prod_{i=1}^n\frac{\Gamma(\kappa)\Gamma(\kappa(i-1)+1)}{\Gamma(i\kappa)},
\end{equation*}
and that
\begin{multline*}
	\prod_{1\leq i<j\leq n}\frac{\Gamma(-a-\kappa(2n-i-j+1)+2)\Gamma(-a-\kappa(2n-i-j-1)+1)}{\Gamma(-a-\kappa(2n-i-j)+2)\Gamma(-a-\kappa(2n-i-j)+1)} =\\ \prod_{i=1}^n\frac{\Gamma(-a-\kappa(2n-i)+2)\Gamma(-a-\kappa(2n-2i)+1)}{\Gamma(-a-\kappa(2n-2i+1)+2)\Gamma(-a-\kappa(2n-i-1)+1)}.
\end{multline*}
By cancelling common factors in the resulting nominator and denominator, and using the difference equation $\Gamma(z+1) = z\Gamma(z)$, it is now a straightforward exercise to verify that Lemmas \ref{NormFactorQuotientLemma} and \ref{ConstNormFactorLemma} combine to give the statement.
\end{proof}

\section{Concluding remarks}
In this final section we briefly return to the discussion of the relation between the multivariable Bessel polynomials and the hyperbolic Sutherland model with external Morse potential, and present some remarks on the problem of constructing a (moment) functional with respect to which all multivariable Bessel polynomials would be orthogonal.

\subsection{Eigenfunctions of the hyperbolic Sutherland model}
We will throughout this section assume that $\kappa > 3/2$. To each permutation $P\in S_n$ we associate the following open subset of $\mathbb{R}^n$:
\begin{equation*}
	\Delta_P = \lbrace z\in\mathbb{R}^n: z_{P(1)} <\cdots < z_{P(n)}\rbrace.
\end{equation*}
In addition, we assume that the parameters $a$ and $\kappa$ are such that there exist at least one partition $\lambda = (\lambda_1,\ldots,\lambda_n)$ which satisfies the inequality
\begin{equation}\label{parameterCond}
	a < -2(|\lambda| + \kappa(n-1)) + 1.
\end{equation}
For each such partition $\lambda$ and permutation $P\in S_n$ we have the following eigenfunction of the Schr\"odinger type operator \eqref{SchrodOp}:\begin{equation}\label{HEigenfunc}
	\Psi^{(P)}_\lambda(z) = \left\lbrace\begin{array}{ll}
		\frac{n!}{N_\lambda}\Psi_0(e^z)Y_\lambda(e^z), & z\in\Delta_P,\\
		0, & z\notin\Delta_P,
	\end{array}\right.
\end{equation}
with $N_\lambda$ as in Proposition \ref{normFactorsProp}. It is clear from the discussion in Section 7 that this eigenfunction is normalised to one. We remark that all the second order derivatives of the eigenfunction \eqref{HEigenfunc} are contained in the Hilbert space $L^2(\mathbb{R}^n,dz_1\cdots dz_n)$ precisely for $\kappa > 3/2$. In addition, it seems that for $\kappa<3/2$, or rather for $2\kappa(\kappa - 1)<3/2$, it is a rather delicate matter to associate a domain to \eqref{SchrodOp} such that the resulting Schr\"odinger operator is self-adjoint; c.f.~Feh\'er et al.~\cite{FTF05} which contains a detailed investigation of the equivalent problem in the case of the three particle Calogero model.

The eigenfunction \eqref{HEigenfunc} of the Schr\"odinger type operator \eqref{SchrodOp} has the following natural physical interpretation: it represents a bound state in the corresponding quantum many-body system where all particles are confined to the region $\Delta_P$. Indeed, the fact that $\Psi_0$ vanish at each hyperplane $z_i = z_j$ with $i,j = 1,\ldots,n$ such that $i\neq j$ implies that the probability flux across such a hyperplane is zero; see e.g.~Section 2.4 in Sakurai \cite{Sak94} for a definition of the probability flux. In more concrete terms this means that the particles can not pass each other. Consequently, once confined in a given region $\Delta_P$ they will remain there indefinitely. For a further discussion of this point see e.g.~Calogero \cite{Cal71}.

We note that for given values of the parameters $a$ and $\kappa$ there are only a finite number of partitions $\lambda$ that satisfy \eqref{parameterCond}, i.e., the Schr\"odinger type operator \eqref{SchrodOp} has only a finite number of eigenfunctions of the form \eqref{HEigenfunc}. This is a manifestation of the fact that the Morse potential only supports finitely many bound states.

\subsection{The multivariable Bessel moment problem}
As we mentioned at the beginning of Section 7, in the one-variable case it is well known that there exist no (positive) measure on the real line with respect to which all Bessel polynomials are orthogonal. This naturally leads to the question of whether they form an orthogonal system in some more general sense. To be more precise, one might inquire whether or not there exists a non-trivial (moment) functional
\begin{equation*}
	I: \Lambda_n\rightarrow \mathbb{C}
\end{equation*}
with respect to which all multivariable Bessel polynomials are orthogonal. For one variable this problem is known to have a positive solution, and a number of explicit integral representations of such a functional have been constructed. We mention, in particular, the work of Krall and Frink \cite{KF49}, who obtained an integral representation with integration along the unit circle, and Evans et al. \cite{EEKKL92}, in which the integration is along the positive real line. In this section we will briefly consider this problem in the two-variable case.

We thus set $n = 2$, and recall that the algebra $\Lambda_2$ is freely generated by the elementary symmetric functions
\begin{equation*}
	e_1 = x_1 + x_2,\quad e_2 = x_1x_2;
\end{equation*}
see e.g.~Section I.2 in Macdonald \cite{Mac95}. It is readily verified that, as a linear operator on $\Lambda_2$, $D^B$ is given by
\begin{multline*}
	D^B = (e_1^2 - 2e_2)\partial_{e_1}^2 + 2e_1e_2\partial_{e_1}\partial_{e_2} + 2e_2^2\partial_{e_2}^2\\ + (1 + (2\kappa + a)e_1)\partial_{e_1} + (e_1 + 2(\kappa + a)e_2)\partial_{e_2},
\end{multline*}
where we have used the notation $\partial_{e_1} = \partial/\partial e_1$, and similarly for $\partial_{e_2}$. For a functional $I: \Lambda_2\rightarrow \mathbb{C}$ and $p\in\Lambda_2$ we define $\partial_{e_1} I$, $\partial_{e_2} I$ and $pI$ by setting
\begin{equation*}
	\partial_{e_1}I(q) = -I(\partial_{e_1}q),\quad \partial_{e_2}I(q) = -I(\partial_{e_2}q),\quad pI(q) = I(pq),
\end{equation*}
for all $q\in\Lambda_2$. With this definition in place, it is a matter of a straightforward computation to verify that $DB$ is symmetric with respect to $I$, i.e., $I(qD^Bp) = I(pD^Bq)$ for all $p,q\in\Lambda_2$, if and only if $I$ satisfies the differential equations
\begin{align*}
	\partial_{e_1}(e_1^2 - 2e_2)I + \partial_{e_2}e_1e_2I - (1 + (2\kappa + a)e_1)I &= 0,\\
	\partial_{e_1}e_1e_2I + 2\partial_{e_2}e_2^2I - (e_1 + 2(\kappa + a)e_2)I &= 0.
\end{align*}
We let $I_{nm} = I(e_1^ne_2^m)$ be the moments of $I$ with respect to the linear basis of $\Lambda_2$ formed by the symmetric polynomials $e_1^ne_2^m$ with $n,m\in\mathbb{N}$. The differential equations for $I$ stated above then translate into the following two recurrence relations for the moments $I_{nm}$:
\begin{subequations}\label{momentRecurRels}
\begin{align}
	(n + m - (2\kappa + a))I_{n+1,m} - 2nI_{n-1,m+1} - I_{nm} &= 0,\\
	(n + 2m - 2(\kappa + a))I_{n,m+1} - I_{n+1,m} &= 0.
\end{align}
\end{subequations}
By induction in $n + m$ it is a simple exercise to show that once the moment $I_{0,0}$ is fixed these recurrence relations uniquely determine the remaining moments $I_{nm}$. It is easily seen that for generic values of the parameters $a$ and $\kappa$  (and $n = 2$) the eigenvalues of $D^B$ are non-degenerate. Assuming this to be the case, the two-variable Bessel polynomials thus form an orthogonal system with respect to a functional $I$ determined by the recurrence relations \eqref{momentRecurRels} and the moment $I_{0,0}$.

From the discussion above it is not clear whether such a functional $I$ might have a natural integral representation, or even if its moments (with respect to the elementary symmetric functions) have a simple expression. It is furthermore not at all clear what happens for $n$ greater than two. We plan to return to these questions in a forthcoming paper \cite{Hal}.

\section*{Acknowledgements.}
I would like to thank J.~F.~van Diejen for helpful and inspiring discussions, in particular, on topics relating to the material in Section 7. I would also like to thank the Institute of Mathematics at the University of Talca, where parts of this paper was written, for its hospitality. Financial support from the European Union through the FP6 Marie Curie RTN ENIGMA (Contract number MRTN-CT-200405652) is gratefully acknowledged.

\bibliographystyle{amsalpha}

\end{document}